\tikzset{->-/.style={decoration={
  markings,
  mark=at position .45 with {\arrow{>}}},postaction={decorate}}}
\def\Z{\mathbb{Z}}
\def\qed{{\hfill $\Diamond$}}
\def\b1{{\bf 1}}
\def\g{\mathrm{g}}
\newtheorem{definition}{Definition}
\newtheorem{theorem}[definition]{Theorem}
\newtheorem{proposition}[definition]{Proposition}
\newtheorem{corollary}[definition]{Corollary}
\newtheorem{lemma}[definition]{Lemma}
\title{Tevelev degrees and Hurwitz moduli spaces}
\author{A. Cela, R. Pandharipande, J. Schmitt}
\date{April 2021}
\begin{document}

\maketitle
\begin{abstract}
We interpret the degrees which arise in Tevelev's study of scattering amplitudes in terms of  moduli spaces of Hurwitz covers. Via excess intersection
theory, the boundary geometry of the Hurwitz moduli space
yields a simple recursion for the Tevelev degrees 
(together with their natural two parameter generalization). We find exact solutions which specialize to Tevelev's
formula in his cases and connect to the projective geometry of  lines and Castelnuovo's classical count of $g^1_d$'s in other cases. For almost all values, the calculation of the
two parameter generalization of the Tevelev degree is new. A related count
of refined Dyck paths is solved along the way.
\end{abstract}

\tableofcontents

\section{Introduction}
\subsection{Hurwitz numbers}
The counting of Hurwitz covers of $\mathbb{P}^1$ with
fixed degree and branching data has been studied for more than
a century \cite{Hur}. The question is connected to complex
geometry, topology, and the representation theory of the 
symmetric group. In the last few decades, there has been an
resurgence of interest in Hurwitz covers motivated by
connections to Gromov-Witten theory and the geometry of the 
moduli space of curves, see \cite{BM,Dijk,ELSV, FabP,HM,OPcc,OPmm}.

The moduli space $\mathcal{H}_{g,d,n}$
parameterizes Hurwitz covers
$$ \pi: C \rightarrow \mathbb{P}^1$$
where $C$ is a complete, nonsingular, irreducible curve of genus $g$ with $n$
distinct markings $p_1,\ldots,p_n\in C$, the map $\pi$ is of degree $d$ with
$2g+2d-2$ distinct simple ramification points $$q_1, \ldots,q_{2g+2d-2}\in C \,,$$
 and all the points
$$\pi(p_1),\ldots, \pi(p_n),\pi(q_1),\ldots, \pi(q_{2g+2d-2})\in \mathbb{P}^1$$
are distinct. There is a natural compactification
$$ \mathcal{H}_{g,d,n} \subset \overline{\mathcal{H}}_{g,d,n}$$
by admissible covers \cite{HM}. 

The moduli space of admissible covers has 
two canonical maps determined by the domain and range of the cover:
\begin{equation*}
\begin{tikzcd}
& \overline{\mathcal{H}}_{g,d,n} \arrow[dr,"\epsilon_0"] \arrow[dl, "\epsilon_g", swap]& \\
\overline{\mathcal{M}}_{g,2g+2d-2+n}& &\overline{\mathcal{M}}_{0, 2g+2d-2+n}
\end{tikzcd}
\end{equation*}
The Hurwitz number of \cite{Hur}, in connected form,  is defined using the 
degree of the second map, 
$$\mathsf{Hur}_{g,d}=\frac{\text{deg}(\epsilon_0)}{d^n}\, .$$
The denominator $d^{n}$ removes the dependence
on $n$.{\footnote{The standard definition of the
Hurwitz number is for the $n=0$ geometry.}}

\subsection{Tevelev degrees}\label{Tevint}
A different degree was introduced by Tevelev in his study of
scattering amplitudes of stable curves \cite{Tev} motivated in part by \cite{ABC}.
Tevelev's degree, from the point of
view of the moduli spaces of Hurwitz covers, is defined as follows.
Consider the map 
$$\tau_{g,d,n}: \overline{\mathcal{H}}_{g,d,n} \rightarrow
\overline{\mathcal{M}}_{g,n}\times 
\overline{\mathcal{M}}_{0,n}$$
defined by
$\epsilon_g$ and $\epsilon_0$ 
forgetting the ramification points $q_j$ of the domain of the
cover and the branch points
$\pi(q_j)$ of the range. When
$$d=g+1\ \ \text{and}\ \  n=g+3\,,$$
both the domain and range of $\tau_{g,g+1,g+3}$ have dimension $5g$.
The number of forgotten ramification points on the domain is $4g$.
Tevelev's degree is
$$
\mathsf{Tev}_g= \frac{\text{deg}(\tau_{g,g+1,g+3})}{(4g)!}\, .
$$
The denominator $(4g)!$ reflects the possible orderings of the forgotten 
ramification points.
One of the central results of \cite{Tev} is the remarkably simple formula
\begin{equation}\label{gtt4}
\mathsf{Tev}_g= 2^g
\end{equation}
for all $g\geq 0$.
Tevelev provides several paths to the proof of \eqref{gtt4} involving beautiful aspects of
the 
classical geometry of curves.{\footnote{The
translation of Tevelev's definition to ours requires a change
of language. See  Section \ref{Tevd} for a discussion.}

Our goal here is to study Tevelev's degrees using the boundary
geometry of the moduli space of Hurwitz covers. To start, we observe the dimension
constraint required for the existence of a degree holds more generally.
For $\ell \in \mathbb{Z}$, let
$$d[g,\ell]=g+1+\ell     \  \ \text{and}\ \ \   n[g,\ell]=g+3+2\ell\, .$$
Consider the associated $\tau$-map (again forgetting ramification and branch points),
$$\tau_{g,\ell}: \overline{\mathcal{H}}_{g,d[g,\ell],n[g,\ell]} \rightarrow
\overline{\mathcal{M}}_{g,n[\g,\ell]}\times 
\overline{\mathcal{M}}_{0,n[g,\ell]}\, .$$
The domain and range of $\tau_{g,\ell}$ are both
of dimension $5g+4\ell$. We can therefore define
$$\mathsf{Tev}_{g,\ell} = \frac{\text{deg}(\tau_{g,\ell})}{(2g+2d[g,\ell]-2)!}\, ,$$
where we once again divide by the possible orderings of the forgotten ramification points.

Our first result is that the simplicity of the degree that
Tevelev found for $\ell=0$ continues to
hold for all non-negative $\ell$.

\begin{theorem} 
\label{vvtt3}
For all $g\geq0$ and $\ell\geq 0$, we have
$$\mathsf{Tev}_{g,\ell} = 2^g\, .$$
\end{theorem}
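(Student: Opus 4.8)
The plan is to first reinterpret $\mathsf{Tev}_{g,\ell}$ as a concrete count of maps, then extract a recursion in $g$ from the boundary of $\overline{\mathcal{H}}_{g,d,n}$, and finally solve it. Writing $d=d[g,\ell]$ and $n=n[g,\ell]$, a general point of $\overline{\mathcal{M}}_{g,n}\times\overline{\mathcal{M}}_{0,n}$ is a pair $\big((C,p_1,\ldots,p_n),(\mathbb{P}^1,x_1,\ldots,x_n)\big)$ with $C$ a general smooth curve. Since a general cover has distinct simple ramification and the factor $(2g+2d-2)!$ in the denominator removes the ordering of the forgotten ramification points, the fiber of $\tau_{g,\ell}$ counts degree-$d$ maps $\pi\colon C\to\mathbb{P}^1$ with $\pi(p_i)=x_i$ for all $i$. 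I would first verify this is a transverse, finite count for general data: both source and target of $\tau_{g,\ell}$ have dimension $5g+4\ell$, while the space of degree-$d$ maps has dimension $2d+1-g=n$, matching the $n$ incidence conditions. Two base cases are then immediate: for $g=0$ the graph of $\pi$ is the unique curve of bidegree $(1,d)$ through the $n=2d+1$ general points $(p_i,x_i)$ in $\mathbb{P}^1\times\mathbb{P}^1$, so $\mathsf{Tev}_{0,\ell}=1$; and a direct cross-ratio argument over $\mathrm{Pic}^2$ of a general elliptic curve gives $\mathsf{Tev}_{1,0}=2$.

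Next I would produce the recursion by moving the target point into the boundary. Choosing a one-parameter family that simultaneously degenerates $(C,p_\bullet)$ in $\overline{\mathcal{M}}_{g,n}$ and $(\mathbb{P}^1,x_\bullet)$ in $\overline{\mathcal{M}}_{0,n}$, the admissible covers in the fiber degenerate compatibly: the range breaks as $R_0\cup_y R_1$, the domain breaks over the node $y$, and the limiting cover is the gluing of a cover of $R_0$ to a cover of $R_1$ along matching ramification profiles over $y$. The distribution of the markings $p_i$ (equivalently $x_i$) across the two components is dictated by the family, and I would arrange it so that one side carries a genus-$(g-1)$, degree-$d[g-1,\ell]$ Tevelev problem while the other side carries an elementary piece of genus $1$ whose intrinsic count is $2$. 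Summing over the finitely many combinatorial types of splitting — the profiles over $y$ together with the induced partition of the incidence conditions — then expresses $\deg(\tau_{g,\ell})$ in terms of lower values of $\deg(\tau)$.

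The main obstacle is the excess intersection. The map $\tau_{g,\ell}$ is not transverse to the boundary divisors of $\overline{\mathcal{M}}_{g,n}\times\overline{\mathcal{M}}_{0,n}$: the limiting admissible covers appear with multiplicity, they may carry contracted rational components and nontrivial automorphisms coming from the gluing over $y$, and several boundary types can contribute over the same point. I would therefore compute each boundary contribution as an excess intersection number, using the local structure of $\overline{\mathcal{H}}$ near an admissible cover (smoothing a node over $y$ contributes the product of the ramification orders, the standard admissible-cover multiplicity) to pin down the coefficients. Controlling these multiplicities, and verifying that no unexpected stratum contributes, is where the real work lies.

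Finally I would solve the resulting recursion. The genus reduction contributes the factor $2$ from the elliptic piece, while the degree and incidence bookkeeping leaves $\ell$ untouched, so the recursion collapses to $\mathsf{Tev}_{g,\ell}=2\,\mathsf{Tev}_{g-1,\ell}$; together with the base values $\mathsf{Tev}_{0,\ell}=1$ this yields $\mathsf{Tev}_{g,\ell}=2^g$ for all $g\geq 0$ and $\ell\geq 0$. As an independent check on the base and small cases, I would also compute the count directly on the projective bundle $\mathbb{P}(\mathcal{E})$ over $\mathrm{Pic}^{d}(C)$, where $\mathcal{E}$ is built from the pushforward of the Poincar\'e bundle and the $n$ incidence conditions become divisor classes; here the Brill–Noether jumping of $h^0$ forces the same flavor of excess analysis, providing a useful consistency test.
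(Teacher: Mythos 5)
The overall architecture you propose (reinterpret the degree enumeratively, degenerate, extract a recursion in $g$, solve it) is the same as the paper's, and your base case $\mathsf{Tev}_{0,\ell}=1$ via the unique bidegree $(1,d)$ curve through $2d+1$ general points of $\mathbb{P}^1\times\mathbb{P}^1$ is a clean substitute for the paper's genus $0$ computation. But the heart of your argument --- the claim that the boundary analysis ``collapses to $\mathsf{Tev}_{g,\ell}=2\,\mathsf{Tev}_{g-1,\ell}$,'' with the factor $2$ supplied by ``an elementary piece of genus $1$ whose intrinsic count is $2$'' --- is asserted rather than derived, and it is exactly the step that fails as stated. When the degeneration is actually carried out (the paper degenerates the domain to a genus $g-1$ curve attached at two points to a rational tail carrying the last marking, keeping the target general), the surviving strata do \emph{not} give two copies of the same one-parameter problem. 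One contribution is a genus $g-1$ Tevelev problem, but the others force several markings of the limiting cover into a common fiber (they all map to the node of the degenerate target), so the recursion obtained is
$$\mathsf{Tev}_{g,\ell,r}=\mathsf{Tev}_{g-1,\ell,\max(1,r-1)}+\mathsf{Tev}_{g-1,\ell+1,r+1}\,,$$
which does not close on the single-parameter family $\mathsf{Tev}_{g,\ell}=\mathsf{Tev}_{g,\ell,1}$: the second term is a genuinely different count (with $r+1$ markings constrained to lie in a fiber), and it arises as the net of a positive contribution and a \emph{negative} excess contribution involving a $\psi$-class on a codimension-one stratum. The factor $2$ per genus is an output of solving this coupled two-parameter system --- one needs $\mathsf{Tev}_{g-1,\ell+1,2}=2^{g-1}$ for $\ell\geq 0$, which requires computing all the $\mathsf{Tev}_{g,\ell,r}$ --- not a local input from a genus-$1$ vertex.

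So the concrete gap is this: you have not exhibited any degeneration for which the splitting types, multiplicities, and excess classes actually sum to $2\,\mathsf{Tev}_{g-1,\ell}$, and the degeneration closest to the one you sketch demonstrably produces auxiliary degrees outside your one-parameter family. To repair the argument along your lines you would either have to introduce and compute the two-parameter degrees $\mathsf{Tev}_{g,\ell,r}$ (as the paper does), or find a different degeneration whose boundary combinatorics closes on $r=1$ --- which is precisely the ``real work'' you defer. Your consistency checks ($\mathsf{Tev}_{1,0}=2$ by cross-ratios, the projective-bundle computation over $\mathrm{Pic}^d(C)$) agree with the answer but do not substitute for this step.
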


The proof of Theorem \ref{vvtt3} involves a recursion which
arises from the analysis
of the excess intersection theory of a particular fiber of $\tau_{g,\ell}$. 
The study of another naturally related degree 
is necessary for the argument.

For $1\leq r \leq d$, let
$\overline{\mathcal{H}}_{g,d,n,r}$
be the moduli space of admissible covers with $n$ markings {\em of which
$r$ lie in the same fiber of the cover}. See Figure \ref{Fig:Hbargdnr} for an illustration.

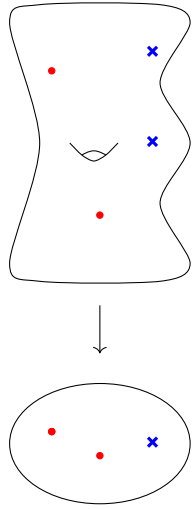
\begin{figure}[htb]
\centering
\begin{tikzpicture}[scale=0.8]
% Curve upstairs
\draw plot [smooth cycle, tension = 0.5] coordinates {(0,0) (0.5, 0.3) (2.5,0.3) (3,0) (2.5,-1) (3,-2) (2.5,-3) (3,-4) (2.5,-4.3) (0.5,-4.3) (0,-4) (0.5,-2)};

% Hole in curve upstairs
\draw plot [smooth, tension = 0.5] coordinates {(1,-2) (1.2,-2.2) (1.4,-2.3) (1.6,-2.2) (1.8,-2)};
\draw plot [smooth, tension = 0.8] coordinates { (1.2,-2.2) (1.4,-2.13) (1.6,-2.2)};

% Arrow down, curve downstairs
\draw [->] (1.5, -4.7) -- (1.5, -5.5);
\draw plot [smooth cycle, tension = 1] coordinates {(0,-7) (1.5, -6) (3,-7) (1.5,-8)};

% Marked points
\draw[blue, very  thick] (2.3,-6.9) -- (2.45, -7.05);
\draw[blue, very  thick] (2.3,-7.05) -- (2.45, -6.9);
\draw[blue, very  thick, yshift=5cm] (2.3,-6.9) -- (2.45, -7.05);
\draw[blue, very  thick, yshift=5cm] (2.3,-7.05) -- (2.45, -6.9);
\draw[blue, very  thick, yshift=6.5cm] (2.3,-6.9) -- (2.45, -7.05);
\draw[blue, very  thick, yshift=6.5cm] (2.3,-7.05) -- (2.45, -6.9);

\filldraw[red]  (1.5, -7.2) circle (1.5pt);
\filldraw[red, yshift=4cm]  (1.5, -7.2) circle (1.5pt);
\filldraw[red]  (0.7, -6.8) circle (1.5pt);
\filldraw[red]  (0.7, -6.8) circle (1.5pt);
\filldraw[red, yshift=6cm]  (0.7, -6.8) circle (1.5pt);

% \draw [red] plot [smooth cycle] coordinates {(0,0) (1,1) (3,1) (1,0) (2,-1)};
% \draw [gray!50, xshift=4cm]  (0,0) -- (1,1) -- (2,-2) -- (3,0);
% \draw [cyan, xshift=4cm] plot [smooth, tension=2] coordinates { (0,0) (1,1) (2,-2) (3,0)};
\end{tikzpicture}
\caption{A curve $C \to \mathbb{P}^1$ in $\overline{\mathcal{H}}_{1,d,4,2}$ with $n=4$ markings in $C$ of which $r=2$ lie in the fibre over the same point of $\mathbb{P}^1$.}
\label{Fig:Hbargdnr}
\end{figure}

\noindent Since $r$ markings lie in the same fiber, the range of the
map $\epsilon_0$ is altered,
$$ \epsilon_0: \overline{\mathcal{H}}_{g,d,n,r} \rightarrow
\overline{\mathcal{M}}_{0, 2g+2d-2+n-r+1}\, .$$
For $\ell\in \mathbb{Z}$, let the $\tau$-map
$$\tau_{g,\ell,r}: \overline{\mathcal{H}}_{g,d[g,\ell],n[g,\ell],r} \rightarrow
\overline{\mathcal{M}}_{g,n[\g,\ell]}\times 
\overline{\mathcal{M}}_{0,n[g,\ell]-r+1}\, .$$
be obtained as before by forgetting all the ramification points of the domain
and branch points of the range of the cover.
The domain and range of $\tau_{g,\ell,r}$ are both
of dimension $5g+4\ell-r+1$. The degrees
$$\mathsf{Tev}_{g,\ell,r} = 
\frac{\text{deg}(\tau_{g,\ell,r})}{(2g+2d[g,\ell]-2)!}\, .$$
appear in the proof of Theorem \ref{vvtt3}}. 

In case $r=1$, we recover the previously defined Tevelev degree
$$ \mathsf{Tev}_{g,\ell,1} =\mathsf{Tev}_{g,\ell} \, .$$
In case $\ell=0$ and $r=1$,
we recover Tevelev's original count
$$\mathsf{Tev}_{g,0,1}=\mathsf{Tev}_g\, .$$
We therefore have a two parameter 
variation of the Tevelev degree.

The study of $\mathsf{Tev}_{g,\ell,r}$ separates into
two main cases depending upon whether $\ell$ is non-negative
or non-positive.

\subsection{Results for \texorpdfstring{$\ell \geq 0$}{l>=0}}
The following two results completely determine
the degrees $\mathsf{Tev}_{g,\ell,r}$ in all cases where $\ell\geq 0$.

\begin{theorem} 
\label{vvtt6} For all $g\geq0$, $\ell= 0$, and $1\leq r \leq g+1$, we have
$$\mathsf{Tev}_{g,0,r} = 2^g -\sum_{i=0}^{r-2} \binom{g}{i}\, .$$
\end{theorem}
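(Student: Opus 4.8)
The plan is to prove the equivalent closed form
$$\mathsf{Tev}_{g,0,r} = \sum_{i=r-1}^{g} \binom{g}{i}\,,$$
obtained from the stated formula via $\sum_{i=0}^{g}\binom{g}{i}=2^g$. In this shape the statement is naturally encoded by the recursion $\mathsf{Tev}_{g,0,r}-\mathsf{Tev}_{g,0,r+1}=\binom{g}{r-1}$ for $1\le r\le g$, together with the base case $\mathsf{Tev}_{g,0,g+1}=1$. I would first fix the geometric meaning of the count: after choosing a general $(C,p_1,\dots,p_{g+3})\in\overline{\mathcal{M}}_{g,g+3}$ and a general target configuration, $\mathsf{Tev}_{g,0,r}$ is the number of degree $d=g+1$ covers $f:C\to\mathbb{P}^1$ with $f(p_1)=\dots=f(p_r)$ prescribed at one general point and $f(p_{r+1}),\dots,f(p_{g+3})$ prescribed at distinct general points, each counted with multiplicity one since the general fibre of $\tau_{g,0,r}$ is reduced.

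The base case is the cleanest anchor and I would verify it first. For $r=g+1$ the fibre $f^{-1}(f(p_1))$ has degree $g+1$ and must contain the $g+1$ points $p_1,\dots,p_{g+1}$, so generically it equals $p_1+\dots+p_{g+1}$ and the line bundle $L=\mathcal{O}_C(p_1+\dots+p_{g+1})$ is forced. For a general curve and general points one checks $h^0(K_C-L)=0$, whence $h^0(L)=2$ by Riemann--Roch; thus $L$ carries a unique pencil and a unique map to $\mathbb{P}(H^0(L)^\vee)$. Finally the three constraints coming from $f(p_1),f(p_{g+2}),f(p_{g+3})$ pin down the identification with the target $\mathbb{P}^1$ uniquely, giving $\mathsf{Tev}_{g,0,g+1}=1$.

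For the recursion I would run the same boundary/excess-intersection analysis used for Theorem \ref{vvtt3}, but now degenerating the target so that the free branch point $f(p_{r+1})$ collides with the common point $f(p_1)=\dots=f(p_r)$. In the limit the target $\mathbb{P}^1$ acquires a bubble $B$ carrying exactly these two special points, and an admissible cover splits into a cover of the main component glued to a cover of $B$ along the fibre over the node. The interior limits, in which $p_{r+1}$ genuinely lands in the common fibre, reassemble into covers counted by $\mathsf{Tev}_{g,0,r+1}$; the remaining boundary strata, in which a piece of the domain peels off over $B$, furnish the correction term. The content of the recursion is that this correction equals $\binom{g}{r-1}$, and I would extract it from the combinatorics of how the $2g+2d-2$ ramification points and the markings distribute across the node, exactly the refined Dyck path count advertised in the introduction.

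The main obstacle is the excess intersection bookkeeping at the degenerate fibre: the relevant boundary strata of $\overline{\mathcal{H}}_{g,g+1,g+3,r}$ meet the fibre of $\tau_{g,0,r}$ non-transversally, and one must compute the local contribution of each stratum via the Euler class of the excess normal bundle rather than merely count components. Establishing that the total correction collapses to the single binomial $\binom{g}{r-1}$, and in particular that all contributions with more than one domain component over the bubble either cancel or vanish, is the crux; once the recursion and the base case $\mathsf{Tev}_{g,0,g+1}=1$ are in hand, the closed form $\sum_{i=r-1}^{g}\binom{g}{i}$, equivalently $2^g-\sum_{i=0}^{r-2}\binom{g}{i}$, follows by descending induction on $r$.
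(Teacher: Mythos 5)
There is a genuine gap: the central computation of your argument is never carried out. Your strategy is a descending induction on $r$ at fixed genus, anchored at $\mathsf{Tev}_{g,0,g+1}=1$ and driven by the claimed identity $\mathsf{Tev}_{g,0,r}-\mathsf{Tev}_{g,0,r+1}=\binom{g}{r-1}$, which you propose to prove by degenerating the \emph{target} configuration so that $f(p_{r+1})$ collides with the common image of $p_1,\ldots,p_r$. The identity is arithmetically consistent with the theorem, and the base case is essentially the linear-series argument the paper sketches in Section \ref{Sect:Gale} (for $\ell=0$ the space $\mathbb{P}(\mathsf{Tan}_\zeta)$ is a point, so $\mathsf{Tev}_{g,0,g+1}=1$). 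But the entire content of the induction step is the assertion that the boundary correction in this target degeneration equals $\binom{g}{r-1}$, and you explicitly defer this: you do not enumerate the admissible-cover strata over the bubbled target, compute their multiplicities and excess classes, or show that the strata with several domain components over the bubble cancel or vanish. Identifying the surviving contribution with a single binomial coefficient would itself be a nontrivial enumerative statement (comparable in difficulty to the theorem), and nothing in the proposal supplies it. The appeal to ``the refined Dyck path count advertised in the introduction'' is also a misidentification: that count (Theorem \ref{n33r} and Section \ref{rdpc}) concerns the coefficients of $\mathsf{T}_{\ell,r}$ in the basis $\mathsf{E}_s$ for $\ell\leq 0$, not the correction term in an $r\mapsto r+1$ recursion at $\ell=0$, so it cannot be cited as doing this work for you.

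For contrast, the paper proves Theorem \ref{vvtt6} by a different recursion, $\mathsf{Tev}_{g,\ell,r} = \mathsf{Tev}_{g-1,\ell,\max(1,r-1)} + \mathsf{Tev}_{g-1,\ell+1,r+1}$ (Proposition \ref{recc}), obtained by degenerating the \emph{domain} curve $C$ to a general genus $g-1$ curve attached to a rational bridge carrying the last marking, keeping the target configuration general and nonsingular; the excess-intersection analysis there is carried out in full (three contributing strata, with an explicit excess class $-\psi_w\otimes 1 - 1\otimes\psi_z$ in Contribution 2, and an exclusion argument for all other strata). The induction then runs on $g$ down to the genus $0$ case (Proposition \ref{genus-0-case}), treating all $\ell\geq 0$ at once. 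If you want to salvage your route, you must either perform the analogous stratum-by-stratum analysis for the target degeneration and actually evaluate the correction term, or replace that step with the paper's domain degeneration.
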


\begin{theorem} 
\label{vvtt7}
For all $g\geq0$, $\ell> 0$, and $1\leq r \leq g+1+\ell$, we have:
\begin{eqnarray*}
\mathsf{Tev}_{g,\ell,r} &=&  2^g \ \ \ \ \ \ \ \ \ \ \ \ \ \ \text{if}
\ \ \ell\geq r\, ,\\
\mathsf{Tev}_{g,\ell,r} &=& \mathsf{Tev}_{g,0,r-\ell} \ \ \ \ \ \text{if}
\ \ \ell<r\, .
\end{eqnarray*}
\end{theorem}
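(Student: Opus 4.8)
The plan is to reduce the theorem, for $\ell>0$, to the single diagonal recursion
$$\mathsf{Tev}_{g,\ell,r}=\mathsf{Tev}_{g,\ell-1,r-1}\qquad(\ell\geq 1,\ r\geq 2),$$
together with the two families of base cases already in hand: Theorem \ref{vvtt3} supplies $\mathsf{Tev}_{g,\ell,1}=\mathsf{Tev}_{g,\ell}=2^g$ for every $\ell\geq 0$, and Theorem \ref{vvtt6} supplies the values of $\mathsf{Tev}_{g,0,r}$. Granting the recursion, both cases follow by iterating the diagonal step. If $\ell\geq r$ we descend $r-1$ times to $\mathsf{Tev}_{g,\ell-r+1,1}$, and since $\ell-r+1\geq 1$ this equals $2^g$ by Theorem \ref{vvtt3}. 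If $0<\ell<r$ we descend $\ell$ times to $\mathsf{Tev}_{g,0,r-\ell}$, the asserted value; one checks that every intermediate step stays in the range $\ell_{\rm cur}\geq 1,\ r_{\rm cur}\geq 2$, and that the inequalities $1\leq r\leq g+1+\ell$ guarantee $1\leq r-\ell\leq g+1$, so Theorem \ref{vvtt6} indeed applies at the bottom of the recursion.

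To prove the recursion I would compute $\deg(\tau_{g,\ell,r})=(2g+2d[g,\ell]-2)!\cdot\mathsf{Tev}_{g,\ell,r}$ over a carefully chosen boundary point of the target, using the excess–intersection analysis of a special fiber that drives the proof of Theorem \ref{vvtt3}. Concretely, fix a general domain curve $(C,p_1,\dots,p_n)\in\overline{\mathcal{M}}_{g,n}$ and place the genus-$0$ configuration at a boundary point where the point $z_0\in\mathbb{P}^1$ carrying the $r$ collided markings has run into the image of one free marking $p_j$; the target then sprouts a rational bubble $B$ carrying $z_0$ and the image of $p_j$. The fiber of $\tau_{g,\ell,r}$ over this point is no longer finite, and its contribution is organized by how the $d[g,\ell]$ sheets and the ramification are distributed across the node. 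The stratum I expect to dominate is the one in which a rational tail over $B$ absorbs the markings $p_r$ and $p_j$, leaving $r-1$ collided markings over the node on the main cover. Numerically this is exactly the passage from $(\ell,r)$ to $(\ell-1,r-1)$: the total number of markings drops by two (one collided, one free), the special fibre retains $r-1$ points, and one unit of degree together with two simple ramifications is carried off by the bubble — the two ramifications being forced by Riemann–Hurwitz once the effective degree of the main cover drops to $d[g,\ell]-1=d[g,\ell-1]$. The degree drop is transparent in the linear–series picture, where the tail records a base point of the pencil at $p_r$, so that the pencil descends to the twist of degree $d[g,\ell-1]$.

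The main obstacle is to control all the boundary contributions in this degeneration and to show that only the stratum described above survives, and with multiplicity one. A priori the limiting fibre meets several strata, indexed by the distribution of sheets and ramifications over the node and by further bubbling, and each contributes to $\deg(\tau_{g,\ell,r})$ through the Euler class of its excess normal bundle. I expect the crucial point to be that, for $\ell\geq 1$, the one available unit of excess degree forces every competing stratum either to be empty for dimension reasons or to contribute with vanishing excess class, leaving a single reduced contribution equal to $\mathsf{Tev}_{g,\ell-1,r-1}$. Verifying this vanishing — and matching the factorial normalizations $(2g+2d[g,\ell]-2)!$ against $(2g+2d[g,\ell-1]-2)!$ across the two simple ramifications that move onto the bubble — is where the real work lies; it is precisely the bookkeeping that the presence of an admissible-covers boundary, rather than a naive degree count, makes tractable.
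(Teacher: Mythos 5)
Your reduction of the theorem to the diagonal identity $\mathsf{Tev}_{g,\ell,r}=\mathsf{Tev}_{g,\ell-1,r-1}$ (for $\ell\geq 1$, $r\geq 2$) is clean and correct: the range bookkeeping checks out, and this identity together with Theorems \ref{vvtt3} and \ref{vvtt6} does yield both cases of Theorem \ref{vvtt7}. The gap is that the diagonal identity itself is never proved. You describe a degeneration of the genus~$0$ target, name the stratum you ``expect to dominate,'' and then explicitly defer the exclusion of competing strata, the multiplicity count, and the excess-class computation (``where the real work lies''). That deferred content is the entire mathematical substance of such an argument. Moreover, your guiding hope --- that every competing stratum is empty or contributes with vanishing excess class, leaving a single reduced contribution --- is not borne out in the closely parallel computation the paper actually performs. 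There, degenerating the \emph{domain} curve $C$ into a genus $g-1$ component glued to a rational bridge yields the recursion $\mathsf{Tev}_{g,\ell,r}=\mathsf{Tev}_{g-1,\ell,\max(1,r-1)}+\mathsf{Tev}_{g-1,\ell+1,r+1}$ of Proposition \ref{recc}, and the second term comes from two additional strata: one is excess-dimensional and contributes through a nonvanishing $\psi$-class integral (a factor $-1$ after normalization), the other contributes with a factor $+2$, and only their sum is meaningful. Nothing in your sketch rules out analogous nonvanishing excess terms for your target-side degeneration, so until they are computed the claimed recursion is unsupported.

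For comparison, the paper does not isolate Theorem \ref{vvtt7}: it proves Theorems \ref{vvtt3}, \ref{vvtt6} and \ref{vvtt7} simultaneously by induction on $g$ via the genus-lowering recursion above, with base case $\mathsf{Tev}_{0,\ell,r}=1$ from Proposition \ref{genus-0-case}. If you wish to keep your fixed-genus route, the cheapest repair is to \emph{derive} the diagonal identity from Proposition \ref{recc} by induction on $g$ (both sides satisfy the same genus-lowering recursion, with compatible degenerate cases at $r=1$ and at the minimal genus), rather than attempting a new degeneration whose excess intersection theory you have not controlled.
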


We have already seen that the $\ell=0$ and $r=1$ case 
recovers Tevelev's result. If we take $r=g+1+\ell$, then
Theorems \ref{vvtt6} and \ref{vvtt7} yield
\begin{equation} \label{vssw}
\mathsf{Tev}_{g,\ell,g+1+\ell} =1\, 
\end{equation}
for $\ell\geq 0$.
For a Hurwitz covering  
$$[\pi:C\rightarrow \mathbb{P}^1]\in {\mathcal{H}}_{g,d[g,\ell],n[g,\ell], r=d[g,\ell]}\, ,$$
a full fiber of $\pi$ is specified by the $r$ markings.
The $r$ markings determine a line bundle $L$ on $C$. The evaluation
\eqref{vssw} can also be obtained by studying a classical
transformation 
related to the associated
complete linear series $\mathbb{P}(H^0(C,L))$. 
See Section \ref{Sect:Gale} for a discussion.

\subsection{Results for \texorpdfstring{$\ell\leq 0$}{l<=0}}

\label{dd13}

The calculation of $\mathsf{Tev}_{g,\ell,r}$ for $\ell\leq 0$ has a 
more
intricate structure. As before, we require
\begin{equation}\label{ffrd}
1\leq r \leq d[g,\ell] = g+1+\ell\, .
\end{equation}
Since the range of $\tau_{g,\ell,r}$ is
$$\overline{\mathcal{M}}_{g,n[\g,\ell]}\times 
\overline{\mathcal{M}}_{0,n[g,\ell]-r+1}\, ,$$
we also require
\begin{equation} \label{ttg5}
    n[g,\ell]-r+1 = g+3+2\ell-r+1 \geq 3\, .
\end{equation}
If either \eqref{ffrd} or
\eqref{ttg5}
is not
satisfied, then $\mathsf{Tev}_{g,\ell,r} =0$ by definition.

In fact, $r\geq 1$ and condition \eqref{ttg5} can be written together as
\begin{equation}\label{ggt}
1\leq r \leq g+1+2\ell
\end{equation}
which implies the upper bound of \eqref{ffrd} when $\ell\leq 0$. 

Suppose $\ell\leq 0$ and $r\geq 1$ are both fixed. 
Let 
$$g[\ell,r]= r-2\ell-1$$
be the minimum genus permitted by \eqref{ggt}.
We express the
solutions to the associated Tevelev counts for all
genera as an infinite vector
$$\mathsf{T}_{\ell,r}=\big(\mathsf{Tev}_{g[\ell,r],\ell,r}\, ,\,
\mathsf{Tev}_{g[\ell,r]+1,\ell,r}\, ,\, \mathsf{Tev}_{g[\ell,r]+2,\ell,r}\, ,\,
\mathsf{Tev}_{g[\ell,r]+3,\ell,r}\, ,
\ldots\big)\, .$$
The $j^{th}$ component{\footnote{We start the vector index at 0, so
$\mathsf{T}_{\ell,r}= \big(\mathsf{T}_{\ell,r}[0],\mathsf{T}_{\ell,r}[1],
\mathsf{T}_{\ell,r}[2], \mathsf{T}_{\ell,r}[3],\ldots\big)$}}
of $\mathsf{T}_{\ell,r}$ is
$$\mathsf{T}_{\ell,r}[j] = \mathsf{Tev}_{g[\ell,r]+j,\ell,r}\, .$$
We will write a formula for $\mathsf{T}_{\ell,r}$.

Define the infinite vector $\mathsf{E}_s$ for $s\geq 1$ as 
$$\mathsf{E}_s=\left(2^{s-1}-\sum_{i=0}^{s-2}\binom{s-1}{i}\, ,\  
2^{s}-\sum_{i=0}^{s-2}\binom{s}{i}\, ,\ 
2^{s+1}-\sum_{i=0}^{s-2}\binom{s+1}{i}\, ,\ \ldots\right)\, .$$
The $j^{th}$ component{\footnote{Again, the vector index is started at 0.}}
of $\mathsf{E}_{s}$ is
$$\mathsf{E}_{s}[j] = 2^{s+j-1} - \sum_{i=0}^{s-2}\binom{s+j-1}{i}\, .$$
The first few vectors are
$$
\begin{array}{cccccccc}
\mathsf{E}_1 & = & \big(& 1, &2,&4,& 8,& 16,\ \ldots\ \big)\\
\mathsf{E}_2 & = & \big(& 1,& 3,&7,& 15,& 31,\ \ldots\ \big)\\
\mathsf{E}_3 & = & \big(& 1,& 4,& 11,& 26,& 57,\  \ldots\ \big)
\end{array}
$$
The leading components are always 1,
$$\mathsf{E}_s[0]=1\, .$$
Moreover, the components of the full set of vectors $\mathsf{E}_s$ is easily seen
to satisfy a Pascal-type addition law
$$\mathsf{E}_{s+1}[j+1] = \mathsf{E}_{s+1}[j] +\mathsf{E}_{s}[j+1]\, .$$
By Theorem \ref{vvtt6}, we have
$\mathsf{E}_s[j]= \mathsf{Tev}_{s+j-1,0,s}$,
so 
\begin{equation}\label{dggee}
\mathsf{T}_{0,r} =\mathsf{E}_r\, .
\end{equation}

Our first result for $\ell\leq 0$ expresses $\mathsf{T}_{\ell,r}$ as a
finite linear combination of the vectors $\mathsf{E}_s$.
The coefficients are non-negative integers and
are given by a simple path counting formula.

The path counting occurs on the integer lattice $\mathbb{Z}^2$.
We are only interested in the lattice points
\begin{equation}
\mathcal{A}=\{ \, (\ell,r)\, | \, \ell\leq 0, r \geq 1 \, \} \subset \mathbb{Z}^2\, .
\end{equation}
Our paths start at the point $(0,1)$, must stay on the lattice points of
$\mathcal{A}$ and take steps only by the vectors
$$\mathsf{U}=(0,1) 
\ \ \ {\text{and}} \ \ \ \mathsf{D}= (-1,-1)\,.$$
Let $\mathsf{P}(\ell,r)$ be the set of such paths from $(0,1)$ to
$(\ell,r)$, see Figure 2.

Let $\gamma\in \mathsf{P}(\ell,r)$.
The {\em index} $\mathsf{Ind}(\gamma)$ is the number of points of
$\gamma$ which meet the
boundary $\partial \mathcal{A}$,
$$\partial \mathcal{A}=  \{ \, (\ell,1)\, | \, \ell\leq 0\, \} \ \cup\
\{ \, (0,r)\, | \,  r \geq 1 \, \}\, \subset \mathcal{A}\, .$$

\begin{theorem}\label{n33r}
Let $\ell\leq 0$ and $r\geq 1$. Then,
$$\mathsf{T}_{\ell,r} = \sum_{\gamma\in \mathsf{P}(\ell,r)} 
\mathsf{E}_{\mathsf{Ind}(\gamma)}\, .$$
\end{theorem}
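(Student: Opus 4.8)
The plan is to prove Theorem \ref{n33r} by showing that both sides of the asserted identity obey the \emph{same} recursion in the pair $(\ell,r)$, with the same initial data, and then to conclude by induction. Everything hinges on one structural property of the vectors $\mathsf{E}_s$. Writing $\mathsf{S}$ for the prefix-sum operator $(\mathsf{S}\mathsf{v})[j]=\sum_{k=0}^{j}\mathsf{v}[k]$ on infinite vectors (with $\mathsf{S}^0=\mathrm{id}$), I claim that $\mathsf{E}_{s+1}=\mathsf{S}\,\mathsf{E}_s$ and $\mathsf{E}_1[j]=2^{j}$. This is immediate from the explicit formula $\mathsf{E}_s[j]=2^{s+j-1}-\sum_{i=0}^{s-2}\binom{s+j-1}{i}$: the Pascal-type law $\mathsf{E}_{s+1}[j+1]=\mathsf{E}_{s+1}[j]+\mathsf{E}_s[j+1]$ recorded in the text telescopes (using $\mathsf{E}_s[0]=1$) to $\mathsf{E}_{s+1}[j]=\sum_{k=0}^{j}\mathsf{E}_s[k]$. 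Thus the index shift $\mathsf{E}_s\mapsto\mathsf{E}_{s+1}$ is exactly the application of $\mathsf{S}$, and each $\mathsf{E}_s$ is obtained from $\mathsf{E}_1$ by $s-1$ prefix sums.

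First I would record the combinatorial recursion satisfied by the right-hand side. Set $F(\ell,r)=\sum_{\gamma\in\mathsf{P}(\ell,r)}\mathsf{E}_{\mathsf{Ind}(\gamma)}$, a finite sum since $\mathsf{P}(\ell,r)$ consists of the finitely many paths of length $r-1-2\ell$. Decomposing each path by its final step, a path to $(\ell,r)$ is a path to $(\ell,r-1)$ followed by $\mathsf{U}$, or a path to $(\ell+1,r+1)$ followed by $\mathsf{D}$, omitting a predecessor that falls outside $\mathcal{A}$. Appending a step raises $\mathsf{Ind}$ by one precisely when the new endpoint $(\ell,r)$ lies on $\partial\mathcal{A}$, so by $\mathsf{E}_{s+1}=\mathsf{S}\,\mathsf{E}_s$ and linearity of $\mathsf{S}$,
\begin{equation*}
F(\ell,r)=\mathsf{S}^{\,\delta(\ell,r)}\big(F(\ell,r-1)+F(\ell+1,r+1)\big),\qquad \delta(\ell,r)=\mathbf{1}\big[(\ell,r)\in\partial\mathcal{A}\big],
\end{equation*}
with $F$ taken to vanish off $\mathcal{A}$ and base value $F(0,1)=\mathsf{E}_1$. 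On the edge $\ell=0$ this reads $F(0,r)=\mathsf{S}\,F(0,r-1)$, reproducing $F(0,r)=\mathsf{E}_r$; on the edge $r=1$ it reads $F(\ell,1)=\mathsf{S}\,F(\ell+1,2)$; in the interior it is the plain sum $F(\ell,r)=F(\ell,r-1)+F(\ell+1,r+1)$.

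The geometric heart of the proof is to establish the identical recursion for the Tevelev vectors $\mathsf{T}_{\ell,r}$. Since $g[\ell,r-1]=g[\ell+1,r+1]=g[\ell,r]-1$, the interior relation is equivalent to the genus-reduction identity
\begin{equation*}
\mathsf{Tev}_{g,\ell,r}=\mathsf{Tev}_{g-1,\ell,r-1}+\mathsf{Tev}_{g-1,\ell+1,r+1}\qquad(\ell\leq -1,\ r\geq 2),
\end{equation*}
which I would derive exactly by the method behind Theorem \ref{vvtt3}: choose the target point of $\tau_{g,\ell,r}$ on a boundary divisor of $\overline{\mathcal{M}}_{g,n[g,\ell]}\times\overline{\mathcal{M}}_{0,n[g,\ell]-r+1}$ along which the source genus drops by one, analyze the excess-dimensional fiber, and match its two contributions with Hurwitz counts in which one constrained marking is redistributed (the $(\ell,r-1)$ term) or absorbed into the degenerating component along with a shift of the degree (the $(\ell+1,r+1)$ term). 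On the edge $r=1$ the same degeneration instead frees a branch point to sweep a one-parameter family of target configurations, and summing the fibrewise counts over the genera it meets produces the prefix sum $\mathsf{T}_{\ell,1}=\mathsf{S}\,\mathsf{T}_{\ell+1,2}$, the geometric shadow of $\delta(\ell,1)=1$. The edge $\ell=0$ is not a recursion but the input datum $\mathsf{T}_{0,r}=\mathsf{E}_r$, supplied by Theorem \ref{vvtt6} and \eqref{dggee}.

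With both recursions in hand the conclusion is a clean double induction. The column $\ell=0$ is known; for each fixed $\ell\leq-1$, the value $\mathsf{T}_{\ell,1}=\mathsf{S}\,\mathsf{T}_{\ell+1,2}$ is determined by the previously treated column $\ell+1$, after which the interior recursion determines $\mathsf{T}_{\ell,r}$ for all $r\geq2$ by induction on $r$ (the term $\mathsf{T}_{\ell,r-1}$ lies in the same column at smaller $r$, and $\mathsf{T}_{\ell+1,r+1}$ is already known). Since $\mathsf{T}_{\ell,r}$ and $F(\ell,r)$ satisfy the same recursion with the same boundary data, they coincide, which is precisely Theorem \ref{n33r}. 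I expect the sole real difficulty to lie in the geometric step, namely setting up the excess intersection on the chosen boundary fiber and, above all, verifying that the $r=1$ edge genuinely contributes the cumulative prefix-sum term rather than a single genus; the combinatorial half, comprising the identity $\mathsf{E}_{s+1}=\mathsf{S}\,\mathsf{E}_s$, the last-step decomposition of paths, and the induction, is routine.
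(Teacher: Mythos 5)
Your argument is correct and is essentially the paper's own proof: both rest on the recursion $\mathsf{Tev}_{g,\ell,r}=\mathsf{Tev}_{g-1,\ell,\max(1,r-1)}+\mathsf{Tev}_{g-1,\ell+1,r+1}$ of Proposition \ref{recc}, the base case $\mathsf{T}_{0,r}=\mathsf{E}_r$, the last-step decomposition of paths in $\mathsf{P}(\ell,r)$, and the Pascal-type law for the $\mathsf{E}_s$, which you merely repackage as the prefix-sum identity $\mathsf{E}_{s+1}=\mathsf{S}\,\mathsf{E}_s$ and organize as a column-by-column induction in $(\ell,r)$ rather than an induction on the genus. The one point you single out as the real difficulty, namely that the $r=1$ edge contributes a cumulative prefix sum, needs no separate geometric argument: it is just the telescoped form in $j$ of the already-established recursion, whose first term at $r=1$ is $\mathsf{Tev}_{g-1,\ell,1}=\mathsf{T}_{\ell,1}[j-1]$ with $\mathsf{T}_{\ell,1}[-1]=0$.
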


\begin{figure}
\centering
\begin{tikzpicture}[xscale=0.4,yscale=0.4]

%path1
\draw [help lines] (0,1) grid (-4, 5);
\draw [thick] (0,1) to (0,2) to (-1,1) to (-1,2) to (-2,1) to (-2,2) to (-3,1);

%path2
\draw [help lines] (6,1) grid (2, 5);
\draw [thick] (6,1) to (6,2) to (5,1) to (5,2) to (5,3) to (4,2) to (3,1);

%path3
\draw [help lines] (12,1) grid (8, 5);
\draw [thick] (12,1) to (12,3) to (10,1) to (10,2) to (9,1);

%path4
\draw [help lines] (18,1) grid (14, 5);
\draw [thick] (18,1) to (18,4) to (15,1);

%path5
\draw [help lines] (24,1) grid (20, 5);
\draw [thick] (24,1) to (24,3) to (23,2) to (23,3) to (21,1);

\end{tikzpicture}
\caption{Possible paths from $(0,1)$ to $(-3,1)$.  There are exactly $C_3 = 5$
possible paths, where $C_3$ is the third Catalan number. Our paths are equivalent to Dyck paths \cite{Stan}.}
\end{figure}
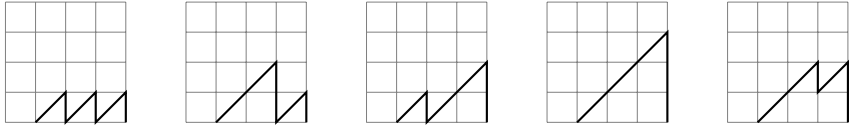

 Theorem \ref{n33r} takes the simplest form in case
 $(\ell,r)=(0,r)$. The set
$\mathsf{P}(0,r)$ contains a unique path{\footnote{The unique path in the
$(\ell,1)=(0,1)$ case is degenerate and consists only of the single point
$(0,1)$.}}
of index $r$. Theorem \ref{n33r} then just recovers \eqref{dggee}.

For all $\ell\leq 0$ and $r\geq 1$,
the set $\mathsf{P}(\ell,r)$ is finite. Moreover, the
index of a path $\gamma\in \mathsf{P}(\ell,r)$ is bounded by
$$\mathsf{Ind}(\gamma) \leq r-\ell+1\, .$$
The following result is then a consequence of Theorem \ref{n33r}.

\begin{corollary} \label{Cor:TZcombination}
Let $\ell\leq 0$ and $r\geq 1$. Then, 
$\mathsf{T}_{\ell,r}$ lies in the $\mathbb{Z}_{\geq 0}$-linear span of
the vectors
$\mathsf{E}_1,\ldots, \mathsf{E}_{r-\ell+1}$.
\end{corollary}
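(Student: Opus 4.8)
The plan is to derive Corollary~\ref{Cor:TZcombination} directly from Theorem~\ref{n33r} by controlling which vectors $\mathsf{E}_s$ can actually appear in the sum $\sum_{\gamma\in \mathsf{P}(\ell,r)} \mathsf{E}_{\mathsf{Ind}(\gamma)}$. Since Theorem~\ref{n33r} already expresses $\mathsf{T}_{\ell,r}$ as a sum of vectors $\mathsf{E}_{\mathsf{Ind}(\gamma)}$ with one summand per path $\gamma\in\mathsf{P}(\ell,r)$, and since each such $\mathsf{E}_{\mathsf{Ind}(\gamma)}$ is one of the fixed vectors $\mathsf{E}_s$ (with coefficient a positive integer, namely the number of paths of that index), the statement will follow once we establish two facts: that $\mathsf{P}(\ell,r)$ is finite, and that the index of every path is bounded by $r-\ell+1$. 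Both are asserted in the text preceding the corollary, so the real content is to verify the index bound $\mathsf{Ind}(\gamma)\leq r-\ell+1$, from which everything else is immediate.

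First I would record the combinatorial setup precisely. A path $\gamma\in\mathsf{P}(\ell,r)$ starts at $(0,1)$, stays in $\mathcal{A}=\{(\ell,r)\mid \ell\leq 0,\ r\geq 1\}$, and moves by the steps $\mathsf{U}=(0,1)$ and $\mathsf{D}=(-1,-1)$, ending at $(\ell,r)$. If the path uses $a$ up-steps and $b$ down-steps, then matching the net displacement gives $-b=\ell$ in the first coordinate and $a-b=r-1$ in the second coordinate; hence $b=-\ell$ and $a=r-1-\ell$. In particular the total number of steps is $a+b=r-1-2\ell$, which is finite and fixed, so $\mathsf{P}(\ell,r)$ is a finite set (there are at most $\binom{r-1-2\ell}{-\ell}$ paths, before imposing the positivity constraint of staying in $\mathcal{A}$). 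The number of lattice points visited is $a+b+1=r-2\ell$.

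Next I would bound the index, which counts the visited points lying on $\partial\mathcal{A}=\{(\ell,1)\mid\ell\leq 0\}\cup\{(0,r)\mid r\geq 1\}$. The crude bound already follows from the total vertex count: $\mathsf{Ind}(\gamma)\leq r-2\ell$. To sharpen this to $r-\ell+1$, the cleanest argument is to note that each boundary contact of $\gamma$ must be followed by a specific kind of step, so boundary points cannot cluster too densely. Concretely, after a point on the horizontal boundary $r=1$ the only admissible step is $\mathsf{U}$ (a $\mathsf{D}$-step would leave $\mathcal{A}$), while the vertical boundary $\ell=0$ is only occupied at the very start of the path; so the $-\ell$ down-steps each strictly decrease the $\ell$-coordinate and the boundary contacts are constrained accordingly. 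The expected main obstacle is making this bookkeeping airtight: one wants to show that between or around the $-\ell$ down-steps the path can touch the boundary at most $r-\ell+1$ times, which is best handled by an explicit induction on $-\ell$ (peeling off the first return to $r=1$) or by the Dyck-path dictionary indicated in the figure caption, where the index corresponds to a statistic such as the number of returns to the axis plus one. Once the bound $\mathsf{Ind}(\gamma)\leq r-\ell+1$ is in hand, I would conclude: every summand $\mathsf{E}_{\mathsf{Ind}(\gamma)}$ in Theorem~\ref{n33r} has subscript in $\{1,\ldots,r-\ell+1\}$, the coefficients are the (nonnegative integer) path counts, and therefore $\mathsf{T}_{\ell,r}$ lies in the $\mathbb{Z}_{\geq 0}$-linear span of $\mathsf{E}_1,\ldots,\mathsf{E}_{r-\ell+1}$, as claimed.
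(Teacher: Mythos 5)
Your proposal follows the same route as the paper: the corollary is presented there as an immediate consequence of Theorem \ref{n33r} together with the finiteness of $\mathsf{P}(\ell,r)$ and the index bound $\mathsf{Ind}(\gamma)\leq r-\ell+1$, both of which the paper simply asserts in the preceding paragraph, so your reduction of the whole statement to those two facts is exactly the intended argument. Your step-count proof of finiteness is correct, and the index bound you flag as the remaining obstacle does hold and can be completed along the lines you indicate: the contacts with the axis $r=1$ have pairwise distinct first coordinates in $\{\ell,\dots,0\}$, the contacts with the axis $\ell=0$ form an initial vertical segment of some length $v\leq r-\ell$, and an initial ascent to height $v$ forces every subsequent return to the $r=1$ axis to have first coordinate at most $1-v$, so the two contact counts trade off and their total (counting $(0,1)$ once) never exceeds $r-\ell+1$.
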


\noindent The  first examples{\footnote{A table of values of $\mathsf{T}_{\ell,r}$
can be found in Section \ref{f665}.}} with $\ell<0$ and $r=1$ are:
$$
\begin{array}{llll}
\mathsf{T}_{-1,1} = \mathsf{E}_3\, , & \mathsf{T}_{-2,1} = 2\mathsf{E}_4\, , & 
\mathsf{T}_{-3,1} = 2\mathsf{E}_4+3\mathsf{E}_5\, , &
\mathsf{T}_{-4,1} =4\mathsf{E}_4+6\mathsf{E}_5+4\mathsf{E}_6 \, .
\end{array}
$$
In fact,
if $\ell<0$, then $\mathsf{T}_{\ell,r}$ lies in the span of
$\mathsf{E}_3,\ldots, \mathsf{E}_{r-\ell+1}$
since every path in $\mathsf{P}(\ell,r)$ has index at least 3.

The leading coefficient of $\mathsf{T}_{\ell,1}$ for $\ell\leq 0$ has an alternative
geometric interpretation: 
$\mathsf{T}_{\ell,1}[0]$ is the number of $g^1_{|\ell|+1}$'s carried
by a general curve of genus $2|\ell|$.
The above examples\footnote{Remember $\mathsf{E}_s[0]=1$.} show
$$
\begin{array}{cccc}
\mathsf{T}_{-1,1}[0] = 1\, , & \mathsf{T}_{-2,1}[0] =2\,, 
&  \mathsf{T}_{-3,1}[0] = 5\,, & 
\mathsf{T}_{-4,1}[0] = 14\,,
\\
\end{array}
$$
which are recognizable as Catalan numbers. 
By the well-known counting of Dyck paths \cite{Deut,Stan},
$$\mathsf{T}_{\ell,1}[0]=|\mathsf{P}(\ell,1)| = \frac{1}{|\ell|+1}
\binom{|2\ell|}{|\ell|}\, ,$$
which, of course, agrees with Castelnuovo's
famous count of linear series \cite{Cast}.

The coefficients of 
the expansion of $\mathsf{T}_{\ell,r}$ in terms of 
$\mathsf{E}_s$ determined by Theorem  \ref{n33r} may be viewed
as a refined Dyck path counting 
problem.\footnote{See \cite{Deut} for the study of various refined Dyck path counting problems.} An exact solution in terms
of binomials is presented in Section \ref{rdpc}. For example,
the coefficients of the expansion
$$\mathsf{T}_{\ell,1}= \sum_{s=3}^{-\ell+2} c^s_{\ell,1} \mathsf{E}_s$$
for $\ell<-1$ are{\footnote{The coefficient formula for $\ell=-1$ and $s=3$ is
also valid if the $0$'s are cancelled.}} 
$$c^s_{\ell,1} 
= \frac{(s-2)(s-3)}{|\ell| -1} {{|2 \ell| -s} \choose {|\ell| + 2- s} } \, . $$

In fact, the summation over paths in Theorem \ref{n33r} can be 
exactly evaluated to yield closed formulas for the Tevelev degrees.

\begin{theorem}\label{negcase}
The Tevelev degrees for  $\ell \leq 0$ and $g\geq r-2\ell-1$ are

\vspace{5pt}
\noindent $\bullet$ for $g\geq 0$ and $r = 1$, 
\begin{equation*}
    \mathsf{T}_{g,\ell,1} = 2^g -2 \sum_{i=0}^{-\ell-2} \binom{g}{i} + (-\ell -2) \binom{g}{-\ell-1} + \ell \binom{g}{-\ell}\, ,
\end{equation*}

\vspace{5pt}
\noindent $\bullet$
for $g\geq 0$ and $r>1$,
\begin{equation*}
    \mathsf{T}_{g,\ell,r} = 2^g -2 \sum_{i=0}^{-\ell-2} \binom{g}{i} + (-\ell + r -3) \binom{g}{-\ell-1} + (\ell-1) \binom{g}{-\ell} - \sum_{i=-\ell+1}^{r-\ell-2} \binom{g}{i}\,.
\end{equation*}
\end{theorem}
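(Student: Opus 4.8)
The plan is to prove Theorem \ref{negcase} by evaluating the path sum of Theorem \ref{n33r} explicitly. First I would pass from the vector statement to the scalar one: since $\mathsf{T}_{\ell,r}[j] = \mathsf{Tev}_{g[\ell,r]+j,\ell,r}$ with $g[\ell,r] = r-2\ell-1$, the degree $\mathsf{Tev}_{g,\ell,r}$ is the component $j = g-r+2\ell+1$. Grouping the paths in $\mathsf{P}(\ell,r)$ by their index and setting $c^s_{\ell,r} = \#\{\gamma\in\mathsf{P}(\ell,r) : \mathsf{Ind}(\gamma)=s\}$, Theorem \ref{n33r} gives $\mathsf{Tev}_{g,\ell,r} = \sum_s c^s_{\ell,r}\,\mathsf{E}_s[j]$. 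I would then split $\mathsf{E}_s[j] = 2^{s+j-1} - \sum_{i=0}^{s-2}\binom{s+j-1}{i}$ into its power-of-two part and its binomial part and treat the two contributions separately.

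The power-of-two part is the source of the leading term $2^g$ and is the easy half. Summing over paths, $\sum_s c^s_{\ell,r}\,2^{s+j-1} = 2^{j-1}\sum_{\gamma\in\mathsf{P}(\ell,r)}2^{\mathsf{Ind}(\gamma)}$, so it suffices to show the weighted count $\sum_{\gamma}2^{\mathsf{Ind}(\gamma)} = 2^{r-2\ell}$; since $j-1+r-2\ell = g$, this yields exactly $2^g$. I would prove the weighted identity by induction using the last-step decomposition $\mathsf{P}(\ell,r) = \mathsf{P}(\ell,r-1)\cdot\mathsf{U}\ \sqcup\ \mathsf{P}(\ell+1,r+1)\cdot\mathsf{D}$ (predecessors dropped when they leave $\mathcal{A}$), together with the fact that, because $\ell$ is non-increasing along a path and no point is revisited, $\mathsf{Ind}$ increases by exactly one precisely when the new endpoint lands on $\partial\mathcal{A}$. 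The degenerate path at $(0,1)$ gives the base case, and the interior, left-edge, and bottom-edge cases each verify $2^{r-2\ell}$ directly.

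The binomial part is the crux. Here I would insert the explicit refined path counts --- the displayed formula for $c^s_{\ell,1}$ and its general-$r$ analogue from Section \ref{rdpc} --- and evaluate $\sum_s c^s_{\ell,r}\sum_{i=0}^{s-2}\binom{s+j-1}{i}$ in closed form. Because the individual terms carry binomials $\binom{s+j-1}{i}$ with shifted top entries while the target answer is written through $\binom{g}{i}$, the evaluation will require repeated use of Pascal's relation together with Vandermonde and hockey-stick summations to telescope the $s$-sum back to $\binom{g}{\cdot}$. I expect $r=1$ and $r>1$ to behave slightly differently: the $r>1$ count produces the additional tail $\sum_{i=-\ell+1}^{r-\ell-2}\binom{g}{i}$ and the modified coefficients of $\binom{g}{-\ell-1}$ and $\binom{g}{-\ell}$, so I would carry the two cases in parallel. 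This binomial collapse is where essentially all the difficulty lies.

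Finally, I would guard the computation with consistency checks: specializing the closed formulas to $\ell=0$ must recover $\mathsf{Tev}_{g,0,r} = 2^g - \sum_{i=0}^{r-2}\binom{g}{i}$ of Theorem \ref{vvtt6} (the empty sums and $\binom{g}{-1}=0$ doing the work), and the small cases $\mathsf{T}_{-1,1},\dots,\mathsf{T}_{-4,1}$ must match the listed values. As an alternative to the direct summation, one can instead verify that the stated closed formula satisfies the recursion inherited from the last-step path decomposition --- translated into the $\mathsf{E}_s$-expansion via the Pascal law $\mathsf{E}_{s+1}[j+1] = \mathsf{E}_{s+1}[j] + \mathsf{E}_s[j+1]$ --- with initial data supplied by the $\ell=0$ base case; this trades the binomial identities for a single inductive step but concentrates the work at the same place.
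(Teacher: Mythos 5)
Your route is genuinely different from the paper's. The paper does not prove Theorem \ref{negcase} through the path sum at all: it proves the master formula (Theorem \ref{vvtt10}, valid for all $\ell\in\mathbb{Z}$) by induction on $g$ directly from the Hurwitz recursion of Proposition \ref{recc}, with base case $g=0$ supplied by Proposition \ref{genus-0-case}; the inductive step is a short verification, via Pascal's rule, that the closed formulas satisfy $\mathsf{Tev}_{g,\ell,r} = \mathsf{Tev}_{g-1,\ell,\max(1,r-1)} + \mathsf{Tev}_{g-1,\ell+1,r+1}$. Your ``alternative'' in the last paragraph is essentially this argument, with one caveat: at the level of the degrees $\mathsf{Tev}_{g,\ell,r}$ the recursion pushes $\ell$ upward past $0$, so the initial data must include the $\ell>0$ values (Theorem \ref{vvtt7}) and the $g=0$ case, not only ``the $\ell=0$ base case''; the paper sidesteps this by proving a single formula uniform in $\ell$. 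Your primary route --- evaluating $\sum_s c^s_{\ell,r}\,\mathsf{E}_s[j]$ with the refined Dyck-path counts of Proposition \ref{Finding the c_s,l,r} --- is a legitimate independent derivation, and the half you actually carry out is correct: the weighted count $\sum_{\gamma\in\mathsf{P}(\ell,r)}2^{\mathsf{Ind}(\gamma)}=2^{r-2\ell}$ holds (your last-step decomposition and the observation that $\mathsf{Ind}$ increments exactly when the new endpoint lies on $\partial\mathcal{A}$ are both sound), and with $j=g-r+2\ell+1$ this does produce the leading $2^g$.

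The gap is exactly where you place it. The binomial half requires showing that the double sum $\sum_s c^s_{\ell,r}\sum_{i=0}^{s-2}\binom{s+j-1}{i}$, with $c^s_{\ell,r}$ itself a three-term combination of binomials in $s$, collapses to $2\sum_{i=0}^{-\ell-2}\binom{g}{i} - (-\ell+r-3)\binom{g}{-\ell-1} - (\ell-1)\binom{g}{-\ell} + \sum_{i=-\ell+1}^{r-\ell-2}\binom{g}{i}$. Invoking ``Pascal, Vandermonde and hockey-stick'' names a toolbox, not an argument: the top index $s+j-1$ and the summation limit $s-2$ both move with $s$, so no single standard identity applies directly, and it is not evident which telescoping order succeeds. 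Since you yourself identify this as ``where essentially all the difficulty lies,'' the proposal as written defers, rather than completes, the proof. The comparison is instructive: the paper pays for its short proof by needing the recursion to hold for all $\ell$ simultaneously (hence the master Theorem \ref{vvtt10}), whereas your route, if the binomial collapse were executed, would give a purely combinatorial derivation from Theorem \ref{n33r} and Section \ref{rdpc} with no further input from the geometry --- but at the cost of a substantially harder identity. To make your proof complete, either carry out that summation explicitly or fall back on your recursion-checking alternative with the corrected base data.
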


\vspace{5pt}
The $r=1$ formula of Theorem \ref{negcase} 
can be viewed as
the $r=1$ specialization{\footnote{Care has to be taken for the meaning of $r=1$ specialization of the
last sum.}} of the
$r>1$ formula. 
In fact, the formulas for $\ell\geq 0$  of Theorems \ref{vvtt3}-\ref{vvtt7} can also be seen
as specializations of the $r>1$ formula  of Theorem \ref{negcase}. 
The $r>1$ formula
of Theorem \ref{negcase} therefore represents a complete calculation of the Tevelev
degrees $\mathsf{T}_{g,\ell,r}$. A unified point of view for
all $\ell$ is presented in Section \ref{fccp}.

\subsection{Hurwitz cycles}
Both Hurwitz numbers and Tevelev degrees are
aspects of a more general question.
Via the diagram
\begin{equation*}
\begin{tikzcd}
& \overline{\mathcal{H}}_{g,d,n} \arrow[dr,"\epsilon_0"] \arrow[dl, "\epsilon_g", swap]& \\
\overline{\mathcal{M}}_{g,2g+2d-2+n}& &\overline{\mathcal{M}}_{0, 2g+2d-2+n}\ ,
\end{tikzcd}
\end{equation*}
we always have a map
$$\tau: \overline{\mathcal{H}}_{g,d,n}\rightarrow
\overline{\mathcal{M}}_{g,2g+2d-2+n} \times \overline{\mathcal{M}}_{0, 2g+2d-2+n}\, .$$
The dimension of the range of $\tau$ usually
exceeds the dimension of the domain. A basic question here is to compute the push-forward of the fundamental class:
\begin{equation}\label{qqq}
\tau_*[\overline{\mathcal{H}}_{g,d,n}] \in
\mathsf{CH}^*\big(\overline{\mathcal{M}}_{g,2g+2d-2+n} \times \overline{\mathcal{M}}_{0, 2g+2d-2+n}\big)\, .
\end{equation}
It follows from the main result of \cite{FabP} that
the push-forward is given by tautological classes\footnote{Formally, the results in \cite{FabP} only show that the push-forward to $\overline{\mathcal{M}}_{g,2g+2d-2+n}$ is tautological. However, using the cellular structure of $\overline{\mathcal{M}}_{2g+2d-2+n}$, the
Chow groups on the right hand side of \eqref{qqq} split as the tensor product of Chow groups of the factors. Combining the results of \cite{FabP} with techniques from \cite{Lian1}, each individual term in the tensor product decomposition of \eqref{qqq} can then be seen to be tautological.}
$$\tau_*[\overline{\mathcal{H}}_{g,d,n}] \in
\mathsf{R}^*\big(\overline{\mathcal{M}}_{g,2g+2d-2+n}\big) \otimes \mathsf{R}^*\big(\overline{\mathcal{M}}_{0, 2g+2d-2+n}\big)\, ,$$
but very few complete formulas are known.{\footnote{There are several other natural variations. We could consider more complicated ramification profiles of the Hurwitz covers.
Another idea is to consider relative stable map spaces. The result along these lines which is closest to the spirit of question \eqref{qqq} is Pixton's formula for the double ramification cycle \cite{JPPZ}.
For further aspects of the study of Hurwitz moduli spaces
and their associated cycles on the moduli spaces
of curves, see \cite{BP,Lian2,Lian1,P,PHHT, SvZ, Zelm}.}} 
Hurwitz numbers and Tevelev degrees give
slivers of cohomological information about
the push-forward \eqref{qqq}.

\subsection*{Acknowledgments}
We thank J. Tevelev for a wonderful
lecture in the {\em Algebraic Geometry and Moduli Seminar} at ETH Z\"urich in February 2021. Our paper began by studying his degrees from the perspective of 
Hurwitz covers. We are grateful to G. Farkas, C. Lian, and
D. Ranganathan for related discussions.

We have used computer experiments with the software SageMath \cite{Sage} to calculate examples and explore formulas for $\mathsf{Tev}_{g,\ell,r}$. The code can be found \href{https://cocalc.com/share/6888accf93758e2d1a7c76d0d030f415bced300d/Tevelev\%20degrees.ipynb?viewer=share}{\textcolor{blue}{here}} or can be obtained from the authors upon request.

A.C. was supported by SNF-200020-182181.
R.P. was supported by SNF-200020-182181,  ERC-2017-AdG-786580-MACI, and SwissMAP.  J.S. was supported by the SNF Early Postdoc Mobility grant 184245 and thanks
the Max Planck Institute for Mathematics in Bonn for its hospitality. 

The project has received funding
from the European Research Council (ERC) under the European Union Horizon 2020 research and innovation program (grant agreement No 786580).

%\tableofcontents

\section{Tevelev degrees}\label{Tevd}
Tevelev \cite{Tev} arrives at the numbers $\mathsf{Tev}_g$ from a slightly different point of view. Below we describe his perspective and how it relates to definitions presented in Section \ref{Tevint}.

To start, let $g \geq 0$ and $n=g+3$. Fix a general curve 
$$(C,p_1, \ldots, p_n) \in \mathcal{M}_{g,n}\, .$$
Then a line bundle $\mathcal{L}$ on $C$ of degree $d=g+1$ satisfies
\[\chi(C, \mathcal{L}) = h^0(C,\mathcal{L}) - h^1(C, \mathcal{L}) = g+1 +1-g = 2\]
by Riemann-Roch. 
For a general such line bundle $\mathcal{L} \in \mathrm{Pic}^{g+1}(C)$, the first cohomology vanishes, so  $\mathcal{L}$ has precisely two sections. For general $\mathcal{L}$, the two sections do not have a common zero and therefore define a degree $d$ map
\[
\varphi_{\mathcal{L}} : C \to \mathbb{P}^1 = \mathbb{P}(H^0(C,\mathcal{L})^\vee)\, .
\]

Tevelev \cite{Tev} constructs a rational \emph{scattering amplitude map} $\Lambda$,
\begin{equation} \label{eqn:Lambdadef}
    \Lambda : \mathrm{Pic}^{g+1}(C) \dashrightarrow \mathcal{M}_{0,n}\, ,\ \ \mathcal{L} \mapsto (\mathbb{P}^1, \varphi_{\mathcal L}(p_1), \ldots, \varphi_{\mathcal L}(p_n))\, .
\end{equation}
In \cite[Definition 1.7]{Tev}, $\Lambda$ is shown to be 
dominant and generically finite, and  $\mathsf{Tev}_g$ is defined to be the degree of  $\Lambda$.  The result
$$\mathsf{Tev}_g=2^g$$
is \cite[Theorem 1.14]{Tev}.

\def\Pic{\mathcal{P}ic \,}

To relate Tevelev's definition to the definition of 
Section \ref{Tevint}, let $\Pic_{g,n}^d$ be the \emph{universal Picard stack} over $\mathcal{M}_{g,n}$ parameterizing the data
\[
(C, p_1, \ldots, p_n, \mathcal{L})
\]
of a nonsingular, $n$-pointed genus $g$ curve $C$ together with a line bundle $\mathcal{L}$ of degree $d=g+1$. The morphism
\begin{align*}
    \mathcal{H}_{g,d,n} &\to \Pic_{g,n}^d\,,\\
    \left(\pi : (C,p_1, \ldots, p_n) \to \mathbb{P}^1\right) &\mapsto (C,p_1, \ldots, p_n, \mathcal{L}= \pi^* \mathcal{O}_{\mathbb{P}^1}(1))
\end{align*}
is dominant and generically finite of degree $b!$, where
\[
b = 2g-2+2d = 4g
\]
is the number of (simple) ramification points of a Hurwitz cover $\pi$. 
Indeed, for $\mathcal{L} \in \Pic_{g,n}^d$ general, the fiber will be the map $\pi = \varphi_{\mathcal L}$ together with the $b!$ many choices of ordering the $b$ simple ramification points.

To conclude, consider the map
\[
\tau_{g,d,n} : \mathcal{H}_{g,d,n} \to \mathcal{M}_{g,n} \times \mathcal{M}_{0,n}
\]
from Section \ref{Tevint}. For $(C,p_1, \ldots, p_n) \in \mathcal{M}_{g,n}$ general and every $\mathcal{L}$ in the fibre of the map $\Lambda$ of \eqref{eqn:Lambdadef} over a general point $(\mathbb{P}^1, q_1, \ldots, q_n) \in \mathcal{M}_{0,n}$, we have precisely $b!$ points in the fibre
\[
\tau_{g,d,n}^{-1}( (C,p_1, \ldots, p_n), (\mathbb{P}^1, q_1, \ldots, q_n) )\,.
\]
Hence, we obtain
\[
\mathsf{Tev}_g = \frac{\deg(\tau_{g,d,n})}{(4g)!}
\]
as defined in Section \ref{Tevint}.

\section{Recursion via fiber geometry}

The basic property satisfied by the Tevelev degree which is used in the
proofs of Theorems \ref{vvtt3}-\ref{n33r} is the following recursion.

\begin{proposition}\label{recc}
Let $g, r \geq 1$. Let $\ell \in \mathbb{Z}$ satisfy 
 $$n[g,\ell] - r+1=g+3+2\ell-r+1 \ \geq \ 3\, .$$
Then, we have the recursion
\begin{equation} \label{eqn:tevrecursion}
    \mathsf{Tev}_{g,\ell,r} = \mathsf{Tev}_{g-1,\ell,\max(1,r-1)} + \mathsf{Tev}_{g-1,\ell+1,r+1}\,.
\end{equation}
\end{proposition}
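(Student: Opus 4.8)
The plan is to compute the degree of $\tau_{g,\ell,r}$ over a general point by specializing the point of $\overline{\mathcal{M}}_{g,n[g,\ell]}$ to the boundary while keeping the point of $\overline{\mathcal{M}}_{0,n[g,\ell]-r+1}$ general, and then to read off the two terms from the two combinatorial types of admissible covers that appear in the limiting fiber. By the dictionary of Section \ref{Tevd}, it is equivalent (and more transparent) to count degree $d[g,\ell]$ maps $\varphi\colon C\to\mathbb{P}^1$ from a fixed pointed genus $g$ curve with the prescribed clustering of the images of $p_1,\dots,p_r$ and fixed general images of the remaining markings. First I would fix a generic one-parameter family of such pointed curves degenerating to a nodal curve $C_0$ whose distinguished node lowers the geometric genus to $g-1$; since $\overline{\mathcal{H}}_{g,d[g,\ell],n[g,\ell],r}\to\overline{\mathcal{M}}_{g,n[g,\ell]}$ is proper, the sought degree equals the length of the fiber over $[C_0]$ paired with the general target, computed with the appropriate excess multiplicities.

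The core of the argument is the classification of the admissible covers (equivalently, the limits of the maps $\varphi$) lying over this boundary point, and I expect exactly two types to contribute. In the first, a contracted genus one component carries the dropped genus: the cover has degree $d[g,\ell]$ on the genus $g-1$ part, and the node through which the genus one piece is attached becomes an extra marked point whose image is forced to coincide with the clustered point. Matching genus, degree, markings, and clustering identifies this contribution with $\mathsf{Tev}_{g-1,\ell+1,r+1}$, consistent with the identity $d[g-1,\ell+1]=d[g,\ell]$. In the second, the limit develops a rational bridge carrying a single unit of degree, so the cover has degree $d[g,\ell]-1=d[g-1,\ell]$ on the genus $g-1$ part and one marking leaves the cluster; this contribution is $\mathsf{Tev}_{g-1,\ell,\max(1,r-1)}$. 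The bookkeeping of the markings, together with the dimension constraint $n[g,\ell]-r+1\geq 3$ guaranteeing that the target remains stable after forgetting branch points, is what pins down the precise shifts in $(\ell,r)$ in the two summands; the case $r=1$, where the cluster cannot shrink, is exactly where the $\max(1,r-1)$ appears.

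The main obstacle will be the excess intersection computation establishing that each of the two strata contributes with multiplicity exactly $1$, and that no further combinatorial type of limiting cover contributes. Concretely, I would analyze the fiber of $\tau$ over the boundary point as an excess intersection along the corresponding boundary stratum of $\overline{\mathcal{H}}$, compute the excess (normal/obstruction) bundle arising from smoothing the distinguished node, and check that its contribution is trivial in each of the two cases. The delicate points are the local structure of $\overline{\mathcal{H}}$ along these strata, namely the admissible-cover gluing data over the node, and the verification that the forgetful stabilizations on the two factors of $\overline{\mathcal{M}}_{g,n[g,\ell]}\times\overline{\mathcal{M}}_{0,n[g,\ell]-r+1}$ introduce neither extra automorphisms nor hidden multiplicities. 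Once multiplicity one is established for the two enumerated types and all others are excluded, summing the two contributions yields the recursion \eqref{eqn:tevrecursion}.
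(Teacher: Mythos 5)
Your high-level strategy (specialize the domain curve to the boundary, classify the limiting admissible covers, and extract the two terms of the recursion from the configurations where the degree on the genus $g-1$ part stays at $d[g,\ell]$ versus drops to $d[g,\ell]-1$) is the same as the paper's, and you correctly anticipate that the $\max(1,r-1)$ arises because the cluster cannot shrink when $r=1$. But there are two substantive problems. First, your choice of degeneration and your classification of limits do not match the geometry. The paper does not use a generic one-nodal degeneration: it specializes $C$ to a genus $g-1$ curve attached at \emph{two} points to a rational bridge $R$ carrying the last marking $p_{n[g,\ell]}$ (a codimension $2$ stratum). That specific choice is what forces the extra node into the cluster (giving the shift $r\mapsto r+1$) and what excludes all other strata in the ``exclusion of other components'' step. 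Your first limit type, ``a contracted genus one component,'' does not exist in the admissible covers compactification --- admissible covers are finite, so no component is contracted; that configuration belongs to stable maps, not to $\overline{\mathcal{H}}_{g,d,n,r}$, and the mechanism you give for why the attaching node joins the cluster is not the one that actually operates.

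Second, and more seriously, your assertion that ``each of the two strata contributes with multiplicity exactly $1$'' and that the excess bundle contribution ``is trivial in each of the two cases'' is false in the actual computation, and this is precisely where the work lies. In the paper there are \emph{three} contributing families of loci, not two. The term $\mathsf{Tev}_{g-1,\ell+1,r+1}$ arises as a sum $-1+2=+1$: one family of loci has excess dimension (codimension $1$ in $\overline{\mathcal{H}}$ against the codimension $2$ gluing map), and its excess class $-\psi_w\otimes 1 - 1\otimes\psi_z$ integrates to $-2$ on a $\overline{\mathcal{H}}_{0,2,3,2}$ factor, producing a \emph{negative} contribution $-\mathsf{Tev}_{g-1,\ell+1,r+1}$; a second, dimensionally transverse family contributes $+2\,\mathsf{Tev}_{g-1,\ell+1,r+1}$ because it supports $4$ rather than $2$ boundary-graph structures. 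There are also nontrivial combinatorial multiplicities throughout (counting $\widehat{\Gamma}_0$-structures and distributions of branch points) that must cancel against the normalization by $b[g,\ell]!$. None of this is captured by ``multiplicity one, trivial excess,'' so the crucial quantitative content of the proof is missing from your proposal; as written, the argument would not produce the correct coefficient on $\mathsf{Tev}_{g-1,\ell+1,r+1}$ without the excess intersection computation you have deferred.
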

\begin{proof}
Up to a combinatorial factor, $\mathsf{Tev}_{g,\ell,r}$ is defined as the degree of the map
$$\tau_{g,\ell,r}: \overline{\mathcal{H}}_{g,d[g,\ell],n[g,\ell],r} \rightarrow
\overline{\mathcal{M}}_{g,n[\g,\ell]}\times 
\overline{\mathcal{M}}_{0,n[g,\ell]-r+1}\, .$$
To prove the recursion \eqref{eqn:tevrecursion}, we compute the
degree by analysing the fiber over 
a particular point $$(C,D)\in \overline{\mathcal{M}}_{g,n[\g,\ell]}\times 
\overline{\mathcal{M}}_{0,n[g,\ell]-r+1}\, .$$
The degree of $\tau_{g,\ell,r}$ is simply 
the degree of the zero cycle 
$$\tau_{g,\ell,r}^* [(C,D)]
\in \mathsf{CH}_0\left(\overline{\mathcal{H}}_{g,d[g,\ell],n[g,\ell],r} \right)\, .$$
The actual fiber $\tau_{g,\ell,r}^{-1} [(C,D)]$ will have excess
dimension, so some care must be taken in the analysis.

\vspace{6pt}
\noindent $\bullet$ The pointed curve $C$ in $\overline{\mathcal{M}}_{g,n[\g,\ell]}$ is chosen
to have the following form:
\begin{equation} \label{eqn:Ccurveshape}
\begin{tikzpicture}[baseline = -0.9 cm, scale=0.8]
% Curve upstairs
\draw plot [smooth cycle, tension = 0.5] coordinates {(0,0) (0.5, 0.3) (2.5,0.3) (3,0) (2.5,-1) (3,-2) (2.5,-2.3) (0.5,-2.3) (0,-2) (0.2,-1)};
\draw plot [smooth cycle, tension = 0.5] coordinates {(3.2,-0.3) (3,0) (3.2,0.3) (4,0.1) (5,-1) (4,-2.1) (3.2,-2.3) (3,-2) (3.2,-1.8) (3.5,-1)};

% Hole in curve upstairs
\draw plot [smooth, tension = 0.5, yshift=1cm] coordinates {(1,-2) (1.2,-2.2) (1.4,-2.3) (1.6,-2.2) (1.8,-2)};
\draw plot [smooth, tension = 0.8, yshift=1cm] coordinates { (1.2,-2.2) (1.4,-2.13) (1.6,-2.2)};

% Marked points
% \draw[blue, very  thick, yshift=5.2cm, xshift=-0.2cm] (2.3,-6.9) -- (2.45, -7.05);
% \draw[blue, very  thick, yshift=5.2cm, xshift=-0.2cm] (2.3,-7.05) -- (2.45, -6.9);
\draw[blue, very  thick, yshift=6.5cm] (2.3,-6.9) -- (2.45, -7.05);
\draw[blue, very  thick, yshift=6.5cm] (2.3,-7.05) -- (2.45, -6.9);
\draw[blue, thick, yshift=6cm, xshift=2.4cm] (2.3,-6.9) -- (2.45, -7.05);
\draw[blue, thick, yshift=6cm, xshift=2.4cm] (2.3,-7.05) -- (2.45, -6.9);
\draw[blue, thick, yshift=6cm, xshift=2.4cm] (2.265,-6.975) -- (2.485, -6.975);
\draw[blue, thick, yshift=6cm, xshift=2.4cm] (2.375,-7.085) -- (2.375, -6.865);

\filldraw[red, yshift=6cm]  (0.7, -6.8) circle (1.5pt);
\filldraw[red, yshift=6.5cm, xshift=0.9cm]  (0.7, -6.8) circle (1.5pt);

\draw (1.5, -2.8) node {$C'$};
\draw (3.9, -2.8) node {$R$};
\draw[blue] (5.6, -0.6) node {$p_{n[g,\ell]}$};
\end{tikzpicture}
\end{equation}
 The curve $C$ consists of a general curve $C' \in \overline{\mathcal{M}}_{g-1,n[\g,\ell]-1+2}$ attached at two points to a rational curve $R$. All the markings lie on $C'$ except for 
 the last marking $p_{n[g,\ell]}$ which lies on $R$.
 The marking $p_{n[g,\ell]}$ is the last of the distinguished subset of
 $r$ markings. 
 
 \vspace{6pt}
 \noindent $\bullet$ The pointed curve $D$ in $\overline{\mathcal{M}}_{0,n[g,\ell]-r+1}$  is any general point (in particular, the curve $D$ is nonsingular).
\vspace{6pt}

To compute the cycle $\tau_{g,\ell,r}^* [(C,D)]$, let
\[
\Gamma_0 \ = \ 
\begin{tikzpicture}[baseline = -0.1 cm]
\draw (0,0) to [bend left] (3,0);
\draw (0,0)  to [bend right] (3,0);
\draw (3,0) -- (3.7,0);
\draw (0,0) -- (-0.7,0);
\draw (0,0) -- (-0.7,0.2);
\draw (0,0) -- (-0.7,-0.2);
\filldraw[white] (0,0) circle (0.4cm) node {g-1};
\filldraw[white] (3,0) circle (0.4cm) node {0};
\draw (0,0) circle (0.4cm) node {g-1};
\draw (3,0) circle (0.4cm) node {0};
\end{tikzpicture}
\]
be the stable graph of the curve \eqref{eqn:Ccurveshape},
and let
\[
\xi_{\Gamma_0} : \overline{\mathcal{M}}_{\Gamma_0} = \overline{\mathcal{M}}_{g-1,n[\g,\ell]-1+2} \times \overline{\mathcal{M}}_{0,3} \to \overline{\mathcal{M}}_{g,n[\g,\ell]}
\]
be the associated gluing map, sending the point $(C',R) \in \overline{\mathcal{M}}_{\Gamma_0}$ to the curve $C$. 
We consider the following commutative diagram:
\begin{equation} \label{eqn:Cfibrediagram}
\begin{tikzcd}
 \coprod \overline{\mathcal{H}}_{(\Gamma, \Gamma')} \arrow[d] \arrow[r]& \overline{\mathcal{H}}_{g,d[g,\ell],n[g,\ell],r}\arrow[d]\\
 \coprod \overline{\mathcal{M}}_{\widehat \Gamma_0} \arrow[d] \arrow[r,"\xi_{\widehat \Gamma_0}"] & \overline{\mathcal{M}}_{g,n[\g,\ell] + b[g,\ell]} \arrow[d]\\
 \overline{\mathcal{M}}_{\Gamma_0} \arrow[r,"\xi_{\Gamma_0}"]& \overline{\mathcal{M}}_{g,n[\g,\ell]}
\end{tikzcd}
\end{equation}
The upper right vertical arrow 
is the map remembering the domain of the admissible cover together with the $n[g,\ell]$ markings and the 
\[
b[g,\ell] = 2 g - 2 + 2 d[g,\ell] = 4 g + 2 \ell
\]
simple ramification points. The bottom right vertical arrow
is the map forgetting these extra markings, so the composition on the right is the familiar map $\epsilon_g$.
The disjoint union on the left side of the diagram is over all stable graphs $\widehat{\Gamma}_0$ that can be obtained by distributing $b[g,\ell]$ legs to the two vertices of $\Gamma_0$. The lower square in the diagram is then not quite Cartesian, but the disjoint union of the $\overline{\mathcal{M}}_{\widehat \Gamma_0}$
maps properly and birationally to the corresponding fibre product. This property will be sufficient for the intersection theoretic computations below (see  \cite[Lemma A.18]{BaeSchmitt} for a formal argument).
 
The upper square is a fibre diagram on the level of sets, as proven in \cite[Proposition 3.2]{Lian2}. Here, the disjoint union is over boundary strata $\overline{\mathcal{H}}_{(\Gamma, \Gamma')}$ of $\overline{\mathcal{H}}_{g,d[g,\ell],n[g,\ell],r}$ parameterizing maps from a curve with stable graph $\Gamma$ mapping to a curve of stable graph $\Gamma'$. The index set of the disjoint union consists of isomorphism classes of $\widehat{\Gamma}_0$-structures\footnote{See \cite[Appendix A]{GraberPandharipande} for a discussion of the notion of an $\widehat{\Gamma}_0$-structure on a stable graph $\Gamma$.} on $\Gamma$ such that the induced map $$E(\widehat{\Gamma}_0) \to E(\Gamma) \to E(\Gamma')$$ is surjective. The lift of the upper fibre diagram to the level of schemes is described in \cite[Proposition 3.3]{Lian2}.

Our strategy is now as follows. To compute
\[
\tau_{g,\ell,r}^* [(C,D)] = \epsilon_g^* [C] \cdot \epsilon_0^* [D]\, ,
\]
we consider $[C] = (\xi_{\Gamma_0})_*[(C',R)]$ and use the diagram \eqref{eqn:Cfibrediagram} to decompose $\epsilon_g^* [C]$ into terms supported on the spaces $\overline{\mathcal{H}}_{(\Gamma, \Gamma')}$. Only three types of contributions survive after taking the product with $\epsilon_0^* [D]$. 
We begin by describing  these three surviving cases and
computing their contributions. Multiplicities
and excess intersections
appear.

\vspace{10pt}
\noindent \textbf{Contribution 1.} 
The Hurwitz cover degenerates as indicated in diagram \eqref{eqn:Ccurveshape1} with the degrees of the map written in green.\footnote{In the picture \eqref{eqn:Ccurveshape1}, the preimage of the rightmost component
of the bottom curve consists of a single rational curve mapping with degree $2$
and $d-2$ rational curves mapping with degree 1. Here, and
in the following pictures, we only draw one representative of such rational
components mapping with degree $1$.}
The last $r$ markings are in blue, and the
last marking is starred.
\begin{equation} \label{eqn:Ccurveshape1}
\begin{tikzpicture}[baseline = -0.9 cm, scale=0.8]
% Curve upstairs
\draw plot [smooth cycle, tension = 0.5] coordinates {(0,0) (0.5, 0.3) (2.5,0.3) (3,0) (2.5,-1) (3,-2) (2.5,-2.3) (0.5,-2.3) (0,-2) (0.2,-1)};
\draw plot [smooth cycle, tension = 1] coordinates {(0,-3.5) (1.5,-3) (3,-3.5) (1.5,-4) };

\draw plot [smooth cycle, tension = 1, yshift=3.5cm] coordinates {(3,-3.5) (4,-3) (5,-3.5) (4,-4)  };
\draw plot [smooth cycle, tension = 1, yshift=3.5cm, xshift=-5cm] coordinates {(3,-3.5) (4,-3) (5,-3.5) (4,-4)  };

\draw plot [smooth cycle, tension = 0.4] coordinates { (3.2,-2.3) (3,-2) (3.2,-1.8) (5, -2.75) (3.2,-3.7) (3,-3.5) (3.2,-3.3) (3.3,-2.75)};
\begin{scope}[yscale=1,xscale=-1, xshift=-3cm]
  \draw plot [smooth cycle, tension = 0.4] coordinates { (3.2,-2.3) (3,-2) (3.2,-1.8) (5, -2.75) (3.2,-3.7) (3,-3.5) (3.2,-3.3) (3.3,-2.75)};
\end{scope}

% Hole in curve upstairs
\draw plot [smooth, tension = 0.5, yshift=1cm] coordinates {(1,-2) (1.2,-2.2) (1.4,-2.3) (1.6,-2.2) (1.8,-2)};
\draw plot [smooth, tension = 0.8, yshift=1cm] coordinates { (1.2,-2.2) (1.4,-2.13) (1.6,-2.2)};

% Marked points
% \draw[blue, very  thick, yshift=5.2cm] (2.3,-6.9) -- (2.45, -7.05);
% \draw[blue, very  thick, yshift=5.2cm] (2.3,-7.05) -- (2.45, -6.9);
\draw[blue, very  thick, yshift=6.5cm] (2.3,-6.9) -- (2.45, -7.05);
\draw[blue, very  thick, yshift=6.5cm] (2.3,-7.05) -- (2.45, -6.9);

\draw[blue, thick, yshift=3.5cm] (2.3,-6.9) -- (2.45, -7.05);
\draw[blue, thick, yshift=3.5cm] (2.3,-7.05) -- (2.45, -6.9);
\draw[blue, thick, yshift=3.5cm] (2.265,-6.975) -- (2.485, -6.975);
\draw[blue, thick, yshift=3.5cm] (2.375,-7.085) -- (2.375, -6.865);

\draw[blue, very  thick, yshift=0.4cm] (2.3,-6.9) -- (2.45, -7.05);
\draw[blue, very  thick, yshift=0.4cm] (2.3,-7.05) -- (2.45, -6.9);

% \draw[blue, very  thick, yshift=3.5cm] (2.3,-6.9) -- (2.45, -7.05);
% \draw[blue, very  thick, yshift=3.5cm] (2.3,-7.05) -- (2.45, -6.9);

\filldraw[red, yshift=6cm]  (0.7, -6.8) circle (1.5pt);
\filldraw[red, yshift=6.7cm, xshift=0.8cm]  (0.7, -6.8) circle (1.5pt);
\filldraw[red, yshift=0.3cm]  (0.7, -6.8) circle (1.5pt);
\filldraw[red, yshift=0.4cm, xshift=0.8cm]  (0.7, -6.8) circle (1.5pt);

\draw [->] (1.5, -4.5) -- (1.5, -5.5);

\draw plot [smooth cycle, tension = 1, yshift=-3cm] coordinates {(0,-3.5) (1.5,-3) (3,-3.5) (1.5,-4) };
\draw plot [smooth cycle, tension = 1, yshift=-3cm] coordinates {(3,-3.5) (4,-3) (5,-3.5) (4,-4)  };
\draw plot [smooth cycle, tension = 1, yshift=-3cm, xshift=-5cm] coordinates {(3,-3.5) (4,-3) (5,-3.5) (4,-4)  };

\draw[black!60!green] (1.5, -2) node {\small $d[g,\ell]-1$};
\draw[black!60!green] (1.5, -3.6) node {\small $1$};
\draw[black!60!green] (-1.2, -3) node {\small $2$};
\draw[black!60!green] (4.2, -3) node {\small $2$};
\draw[black!60!green] (-1.2, 0) node {\small $1$};
\draw[black!60!green] (4.2, 0) node {\small $1$};
% \draw (3.9, -2.8) node {$R$};
%\draw[blue] (5.6, -0.6) node {$p_{n[g,\ell]}$};
\end{tikzpicture}
\end{equation}
Each of the two rational components mapping with degree $2$ must carry two of the ramification points.
The total number of loci of the form \eqref{eqn:Ccurveshape1} is therefore given by
\begin{equation} \label{eqn:numloci1}
    \frac{1}{2} \binom{b[d,\ell]}{2,2,b[d,\ell]-4} = \frac{1}{8} \frac{b[d,\ell]!}{(b[d,\ell]-4)!} \,,
\end{equation}
with the factor $1/2$ corresponding to the fact that the two pairs of branch points going to the outer components can be exchanged without changing the locus. 

For $r>1$, the locus is parameterized by the space
\begin{equation} \label{eqn:parloci1}
    \overline{\mathcal{H}}_{(\Gamma, \Gamma')} = \overline{\mathcal{H}}_{g-1,d[g,\ell]-1,n[g,\ell]-1+2,r-1} \times\underbrace{ \overline{\mathcal{H}}_{0,2,2,2} \times \overline{\mathcal{H}}_{0,2,2,2}}_{=\{\mathrm{pt}\}}\,.
\end{equation}
The unique point of $\overline{\mathcal{H}}_{0,2,2,2}$ is given by the map
\[
(\mathbb{P}^1, 1, -1) \xrightarrow{z \mapsto z^2} \mathbb{P}^1\, ,
\]
and the cover has no remaining automorphisms. 

On the other hand, for $r=1$, the position of marking $p_{n[g,\ell]}$ is no longer determined by the map and markings on the genus $g-1$ component. If $r=1$, let
\[
\widehat{\epsilon}_0 : \overline{\mathcal{H}}_{g-1,d[g,\ell]-1,n[g,\ell]-1+2,r-1} \to \overline{\mathcal{M}}_{0,4g+2\ell-4+n[g,\ell]-1}
\]
be the map remembering both the $4g+2\ell-4$ branch points and the images of the $n[g,\ell]-1$ markings on the target of the cover. Then the locus fits into a diagram
\begin{equation} \label{eqn:diagramhelp}
\begin{tikzcd}
 \overline{\mathcal{H}}_{(\Gamma, \Gamma')} \arrow[r] \arrow[d] & \overline{\mathcal{M}}_{0,4g+2\ell-4+n[g,\ell]} \arrow[d]\\
 \overline{\mathcal{H}}_{g-1,d[g,\ell]-1,n[g,\ell]-1+2,1} \arrow[r] & \overline{\mathcal{M}}_{0,4g+2\ell-4+n[g,\ell]-1}
\end{tikzcd}
\end{equation}
where the right vertical arrow  is the map forgetting the last marking.

Since, independent of $r$, the map \eqref{eqn:Ccurveshape1} is unramified at all the nodes, we see from \cite[Proposition 3.3]{Lian2} that the locus $\overline{\mathcal{H}}_{(\Gamma, \Gamma')}$ appears with multiplicity 1 in the upper fibre diagram \eqref{eqn:Cfibrediagram}. On the other hand, there are $8$ different $\widehat{\Gamma}_0$-structures on the graph $\Gamma$ of the domain curve in \eqref{eqn:Ccurveshape1} that satisfy the assumptions of \cite[Proposition 3.2]{Lian2}. Indeed, the image of the first edge of $\widehat{\Gamma}_0$ can be chosen freely among the $4$ nonseparating edges of $\Gamma$, and then the second edge must go to one of the $2$ edges on the other side. So overall, in the disjoint union in the top left of the diagram \eqref{eqn:Cfibrediagram}, there are 
\[
\frac{b[d,\ell]!}{(b[d,\ell]-4)!}
\]
many components that can contribute.

By the strategy explained above, we see that the contribution to the degree of $\tau_{g,\ell,r}^* [(C,D)]$ coming from each of the components
is given by the degree of the map
\begin{equation} \label{eqn:degreehelp}
\overline{\mathcal{H}}_{(\Gamma, \Gamma')} \to \underbrace{\overline{\mathcal{M}}_{g-1,n[g,\ell]+1}}_{=\overline{\mathcal{M}}_{\Gamma_0}} \times \overline{\mathcal{M}}_{0,n[g,\ell]-r+1}
\end{equation}

\noindent $\bullet$ For $r>1$, the locus $\overline{\mathcal{H}}_{(\Gamma, \Gamma')}$ is given by \eqref{eqn:parloci1} and the above map fits as the top horizontal arrow into a fibre diagram
\[
\begin{tikzcd}
 \overline{\mathcal{H}}_{g-1,d[g,\ell]-1,n[g,\ell]+1,r-1} \arrow[r] \arrow[d] & \overline{\mathcal{M}}_{g-1,n[g,\ell]+1} \times \overline{\mathcal{M}}_{0,n[g,\ell]-r+1} \arrow[d]\\
 \overline{\mathcal{H}}_{g-1,d[g,\ell]-1,n[g,\ell]-1,r-1} \arrow[r]  & \overline{\mathcal{M}}_{g-1,n[g,\ell]-1} \times \overline{\mathcal{M}}_{0,n[g,\ell]-r+1}
\end{tikzcd}
\]
So the degree of the top map is the same as the degree of the bottom map. Using 
\[d[g,\ell]-1 = d[g-1,\ell]\, , \ \ \ n[g,\ell]-1 = n[g-1,\ell]\, ,\]
the bottom map is precisely $\tau_{g-1,\ell,r-1}$.

\vspace{5pt} \noindent $\bullet$
For $r=1$, we must distinguish two more cases:

\vspace{5pt}
\noindent $\ \ \ \diamond$ For $n[g,\ell] \geq 4$, the map \eqref{eqn:degreehelp} appears as the top horizontal arrow in the fibre diagram
\[
\begin{tikzcd}
 \overline{\mathcal{H}}_{(\Gamma, \Gamma')} \arrow[r] \arrow[d] & \overline{\mathcal{M}}_{g-1,n[g,\ell]+1} \times \overline{\mathcal{M}}_{0,n[g,\ell]} \arrow[d]\\
 \overline{\mathcal{H}}_{g-1,d[g,\ell]-1,n[g,\ell]-1,1} \arrow[r]  & \overline{\mathcal{M}}_{g-1,n[g,\ell]-1} \times \overline{\mathcal{M}}_{0,n[g,\ell]-1}
\end{tikzcd}
\]
and so the degree of \eqref{eqn:degreehelp} is given by the degree of the bottom map $\tau_{g-1,\ell,1}$. 

\vspace{5pt}
\noindent $\ \ \ \diamond$
For $n[g,\ell]=3$, the map \eqref{eqn:degreehelp} factors through the left vertical map in \eqref{eqn:diagramhelp}. Since the vertical map drops dimension by $1$, we know that the morphism \eqref{eqn:degreehelp} can no longer be dominant and thus yields a zero contribution. The vanishing here is compatible with our original definition, since  $\mathsf{Tev}_{g-1,\ell,1}$ 
is defined to be zero since
$n[g-1,\ell]=2$.
\vspace{5pt}

Overall, we see that the total part of the degree arising from Contribution 1 is given by
\[
\frac{b[d,\ell]!}{(b[d,\ell]-4)!} \deg \tau_{g-1,\ell,\max(1,r-1)} = b[d,\ell]! \cdot  \mathsf{Tev}_{g-1,\ell,\max(1,r-1)}\,.
\]

\vspace{10pt}
\noindent \textbf{Contribution 2.} 
The second type of locus that can contribute parameterizes covers of the form illustrated in diagram \eqref{eqn:Ccurveshape2}.
\begin{equation} \label{eqn:Ccurveshape2}
\begin{tikzpicture}[baseline = -0.9 cm, scale=0.8]
% Curve upstairs
\draw plot [smooth cycle, tension = 0.5] coordinates {(0,0) (0.5, 0.3) (2.5,0.3) (3,0) (2.5,-1) (3,-2) (2.5,-2.75) (3,-3.5) (2.5,-3.8) (0.5,-3.8) (0,-3.5) (0.5,-2.75) (0,-2) (0.2,-1)};
%\draw plot [smooth cycle, tension = 1] coordinates {(0,-3.5) (1.5,-3) (3,-3.5) (1.5,-4) };

\draw plot [smooth cycle, tension = 1, yshift=3.5cm] coordinates {(3,-3.5) (4,-3) (5,-3.5) (4,-4)  };
%\draw plot [smooth cycle, tension = 1, yshift=3.5cm, xshift=-5cm] coordinates {(3,-3.5) (4,-3) (5,-3.5) (4,-4)  };

\draw plot [smooth cycle, tension = 0.4] coordinates { (3.2,-2.3) (3,-2) (3.2,-1.8) (5, -2.75) (3.2,-3.7) (3,-3.5) (3.2,-3.3) (3.3,-2.75)};
% \begin{scope}[yscale=1,xscale=-1, xshift=-3cm]
%   \draw plot [smooth cycle, tension = 0.4] coordinates { (3.2,-2.3) (3,-2) (3.2,-1.8) (5, -2.75) (3.2,-3.7) (3,-3.5) (3.2,-3.3) (3.3,-2.75)};
% \end{scope}

% Hole in curve upstairs
\draw plot [smooth, tension = 0.5, yshift=1cm] coordinates {(1,-2) (1.2,-2.2) (1.4,-2.3) (1.6,-2.2) (1.8,-2)};
\draw plot [smooth, tension = 0.8, yshift=1cm] coordinates { (1.2,-2.2) (1.4,-2.13) (1.6,-2.2)};

% Marked points
% \draw[blue, very  thick, yshift=5.2cm] (2.3,-6.9) -- (2.45, -7.05);
% \draw[blue, very  thick, yshift=5.2cm] (2.3,-7.05) -- (2.45, -6.9);
\draw[blue, very  thick, yshift=7cm, xshift=1.7cm] (2.3,-6.9) -- (2.45, -7.05);
\draw[blue, very  thick, yshift=7cm, xshift=1.7cm] (2.3,-7.05) -- (2.45, -6.9);

\draw[blue, thick, yshift=4.3cm, xshift=1.7cm] (2.3,-6.9) -- (2.45, -7.05);
\draw[blue, thick, yshift=4.3cm, xshift=1.7cm] (2.3,-7.05) -- (2.45, -6.9);
\draw[blue, thick, yshift=4.3cm, xshift=1.7cm] (2.265,-6.975) -- (2.485, -6.975);
\draw[blue, thick, yshift=4.3cm, xshift=1.7cm] (2.375,-7.085) -- (2.375, -6.865);

% \draw[blue, very  thick, yshift=4.3cm, xshift=1.7cm] (2.3,-6.9) -- (2.45, -7.05);
% \draw[blue, very  thick, yshift=4.3cm, xshift=1.7cm] (2.3,-7.05) -- (2.45, -6.9);

\draw[blue, very  thick, yshift=0.4cm, xshift=1.7cm] (2.3,-6.9) -- (2.45, -7.05);
\draw[blue, very  thick, yshift=0.4cm, xshift=1.7cm] (2.3,-7.05) -- (2.45, -6.9);

\filldraw[red, yshift=6cm]  (0.7, -6.8) circle (1.5pt);
\filldraw[red, yshift=6.7cm, xshift=0.8cm]  (0.7, -6.8) circle (1.5pt);
\filldraw[red, yshift=0.3cm]  (0.7, -6.8) circle (1.5pt);
\filldraw[red, yshift=0.4cm, xshift=0.8cm]  (0.7, -6.8) circle (1.5pt);

\draw [->] (1.5, -4.5) -- (1.5, -5.5);

\draw plot [smooth cycle, tension = 1, yshift=-3cm] coordinates {(0,-3.5) (1.5,-3) (3,-3.5) (1.5,-4) };
\draw plot [smooth cycle, tension = 1, yshift=-3cm] coordinates {(3,-3.5) (4,-3) (5,-3.5) (4,-4)  };
\draw[black!60!green] (1.5, -2) node {\small $d[g,\ell]$};
\draw[black!60!green] (3.8, -3.2) node {\small $2$};
\draw[black!60!green] (4.5, 0) node {\small $1$};
\end{tikzpicture}
\end{equation}
The rational component mapping with degree $2$ must carry two of the ramification points. The total number of loci of the form \eqref{eqn:Ccurveshape2} is given by
\begin{equation} \label{eqn:numloci2}
    \binom{b[d,\ell]}{2} = \frac{1}{2} \frac{b[d,\ell]!}{(b[d,\ell]-2)!} \,.
\end{equation}
The locus $\overline{\mathcal{H}}_{(\Gamma, \Gamma')}$ is parameterized by the space
\begin{equation} \label{eqn:parloci2}
    \overline{\mathcal{H}}_{g-1,d[g,\ell],n[g,\ell]+1,r+1} \times  \overline{\mathcal{H}}_{0,2,3,2}\,.
\end{equation}
The $r+1$ index in \eqref{eqn:parloci2} is
explained by the following basic
important observation: all $r+1$ nodes of the genus $g-1$ domain
component connecting to rational curves containing  blue markings map to the same point (which is the node of the range curve). 

The space $\overline{\mathcal{H}}_{0,2,3,2}$, parameterizes double covers
\begin{equation} \label{eqn:H0232element}
    f : (\mathbb{P}^1, p, z, z', q_1, q_2) \to \mathbb{P}^1
\end{equation}
such that $f(z)=f(z')$ and such that $q_1, q_2$ are ramification points of the cover. The natural map 
\[\delta: \overline{\mathcal{H}}_{0,2,3,2} \to \overline{\mathcal{M}}_{0,4}\]
sending the point \eqref{eqn:H0232element} to $$(\mathbb{P}^1, f(p), f(z), f(q_1), f(q_2)) \in \overline{\mathcal{M}}_{0,4}$$ has degree $2$, corresponding to the two choices $z,z'$ of preimage of $f(z)$.

By the same argument as before, the loci
\eqref{eqn:Ccurveshape2}
appear with multiplicity 1 in the upper fibre diagram \eqref{eqn:Cfibrediagram}. On the other hand, there are $2$ $\widehat{\Gamma}_0$-structures on the graph $\Gamma$ of the domain curve in \eqref{eqn:Ccurveshape2}, corresponding to the two choices of sending the edges of $\widehat{\Gamma}_0$ to the two nonseparating edges $e,e'$ of $\Gamma$. However, these two $\widehat{\Gamma}_0$-structures are isomorphic (since the cover $\Gamma \to \Gamma'$ has an automorphism exchanging the two edges $e,e'$).  So overall, in the disjoint union in the top left of the diagram \eqref{eqn:Cfibrediagram} there are 
\[
\frac{1}{2} \cdot \frac{b[d,\ell]!}{(b[d,\ell]-2)!}
\]
many components that can contribute.

However,  a new phenomenon (not seen
in Contribution 1) appears here: a dimension count shows that the loci \eqref{eqn:parloci2} only have codimension $1$ in their ambient space, whereas the gluing map $\xi_{\Gamma_0}$ had codimension $2$. The excess class is given by
\[
- \psi_{w} \otimes 1 - 1 \otimes \psi_{z}
\]
where $w,z$ are the preimages of one of the nonseparating nodes in \eqref{eqn:Ccurveshape2}. 
The class $\psi_{z}$ on $\overline{\mathcal{H}}_{0,2,3,2}$ is the pullback of $\psi_2$ on $\overline{\mathcal{M}}_{0,4}$ under the map $\delta$ above. We calculate
\[
\int_{\overline{\mathcal{H}}_{0,2,3,2}} \psi_{z} = \deg(\delta) \cdot \int_{\overline{\mathcal{M}}_{0,4}} \psi_2 = 2 \cdot 1 = 2\,.
\]

The contribution to the degree of $\tau_{g,\ell,r}^* [(C,D)]$ coming from each locus above is given by the degree of the excess class capped with the preimage of a point under the map
\[
\overline{\mathcal{H}}_{g-1,d[g,\ell],n[g,\ell]+1,r+1} \times  \overline{\mathcal{H}}_{0,2,3,2} \to \overline{\mathcal{M}}_{g-1,n[g,\ell]+1} \times \overline{\mathcal{M}}_{0,n[g,\ell]-r+1}\, .
\]
Since the above map does not depend on the factor $\overline{\mathcal{H}}_{0,2,3,2}$ in the domain, the only nonzero contribution can come from the term $-1 \otimes \psi_{z}$. After eliminating the factor $\overline{\mathcal{H}}_{0,2,3,2}$ by
integrating $-1 \otimes \psi_{z}$
(and obtaining a factor $-2$ from the degree of $-\psi_z$), we are left with the degree of $\tau_{g-1,\ell+1,r+1}$. 

The total part of the degree arising from Contribution 2 is given by
\[
 \frac{1}{2} \cdot \frac{b[d,\ell]!}{(b[d,\ell]-2)!} \cdot (-2) \cdot \deg \tau_{g-1,\ell+1,r+1} = - b[d,\ell]! \cdot  \mathsf{Tev}_{g-1,\ell+1,r+1}\,.
\]

\vspace{10pt}
\noindent \textbf{Contribution 3.} 
The third type of locus that can contribute parameterizes covers of the form illustrated in diagram \eqref{eqn:Ccurveshape3}.
\begin{equation} \label{eqn:Ccurveshape3}
\begin{tikzpicture}[baseline = -0.9 cm, scale=0.8]
% Curve upstairs
\draw plot [smooth cycle, tension = 0.5] coordinates {(0,0) (0.5, 0.3) (2.5,0.3) (3,0) (2.5,-1) (3,-2) (2.5,-2.75) (3,-3.5) (2.5,-3.8) (0.5,-3.8) (0,-3.5) (0.5,-2.75) (0,-2) (0.2,-1)};
%\draw plot [smooth cycle, tension = 1] coordinates {(0,-3.5) (1.5,-3) (3,-3.5) (1.5,-4) };

\draw plot [smooth cycle, tension = 1, yshift=3.5cm] coordinates {(3,-3.5) (4,-3) (5,-3.5) (4,-4)  };
\draw plot [smooth cycle, tension = 1, yshift=3.5cm, xshift=2cm] coordinates {(3,-3.5) (4,-3) (5,-3.5) (4,-4)  };
\draw plot [smooth cycle, tension = 1, yshift=1.5cm] coordinates {(3,-3.5) (4,-3) (5,-3.5) (4,-4)  };
\draw plot [smooth cycle, tension = 1, yshift=0cm] coordinates {(3,-3.5) (4,-3) (5,-3.5) (4,-4)  };
%\draw plot [smooth cycle, tension = 1, yshift=3.5cm, xshift=-5cm] coordinates {(3,-3.5) (4,-3) (5,-3.5) (4,-4)  };

\draw plot [smooth cycle, tension = 0.4, xshift=2cm] coordinates { (3.2,-2.3) (3,-2) (3.2,-1.8) (5, -2.75) (3.2,-3.7) (3,-3.5) (3.2,-3.3) (3.3,-2.75)};
% \begin{scope}[yscale=1,xscale=-1, xshift=-3cm]
%   \draw plot [smooth cycle, tension = 0.4] coordinates { (3.2,-2.3) (3,-2) (3.2,-1.8) (5, -2.75) (3.2,-3.7) (3,-3.5) (3.2,-3.3) (3.3,-2.75)};
% \end{scope}

% Hole in curve upstairs
\draw plot [smooth, tension = 0.5, yshift=1cm] coordinates {(1,-2) (1.2,-2.2) (1.4,-2.3) (1.6,-2.2) (1.8,-2)};
\draw plot [smooth, tension = 0.8, yshift=1cm] coordinates { (1.2,-2.2) (1.4,-2.13) (1.6,-2.2)};

% Marked points
% \draw[blue, very  thick, yshift=5.2cm] (2.3,-6.9) -- (2.45, -7.05);
% \draw[blue, very  thick, yshift=5.2cm] (2.3,-7.05) -- (2.45, -6.9);
\draw[blue, very  thick, yshift=7cm, xshift=1.7cm] (2.3,-6.9) -- (2.45, -7.05);
\draw[blue, very  thick, yshift=7cm, xshift=1.7cm] (2.3,-7.05) -- (2.45, -6.9);

\draw[blue, thick, yshift=5cm, xshift=1.7cm] (2.3,-6.9) -- (2.45, -7.05);
\draw[blue, thick, yshift=5cm, xshift=1.7cm] (2.3,-7.05) -- (2.45, -6.9);
\draw[blue, thick, yshift=5cm, xshift=1.7cm] (2.265,-6.975) -- (2.485, -6.975);
\draw[blue, thick, yshift=5cm, xshift=1.7cm] (2.375,-7.085) -- (2.375, -6.865);

\draw[blue, very  thick, yshift=0.4cm, xshift=1.7cm] (2.3,-6.9) -- (2.45, -7.05);
\draw[blue, very  thick, yshift=0.4cm, xshift=1.7cm] (2.3,-7.05) -- (2.45, -6.9);

% \draw[blue, very  thick, yshift=5cm, xshift=1.7cm] (2.3,-6.9) -- (2.45, -7.05);
% \draw[blue, very  thick, yshift=5cm, xshift=1.7cm] (2.3,-7.05) -- (2.45, -6.9);

\filldraw[red, yshift=6cm]  (0.7, -6.8) circle (1.5pt);
\filldraw[red, yshift=6.7cm, xshift=0.8cm]  (0.7, -6.8) circle (1.5pt);
\filldraw[red, yshift=0.3cm]  (0.7, -6.8) circle (1.5pt);
\filldraw[red, yshift=0.4cm, xshift=0.8cm]  (0.7, -6.8) circle (1.5pt);

\draw [->] (1.5, -4.5) -- (1.5, -5.5);

\draw plot [smooth cycle, tension = 1, yshift=-3cm] coordinates {(0,-3.5) (1.5,-3) (3,-3.5) (1.5,-4) };
\draw plot [smooth cycle, tension = 1, yshift=-3cm] coordinates {(3,-3.5) (4,-3) (5,-3.5) (4,-4)  };
\draw plot [smooth cycle, tension = 1, yshift=-3cm, xshift=2cm] coordinates {(3,-3.5) (4,-3) (5,-3.5) (4,-4)  };
\draw[black!60!green] (1.5, -2) node {\small $d[g,\ell]$};
\draw[black!60!green, xshift=2cm] (3.8, -3.2) node {\small $2$};
\draw[black!60!green] (4.5, 0) node {\small $1$};
\draw[black!60!green] (6.5, 0) node {\small $1$};
\draw[black!60!green] (4.5, -2) node {\small $1$};
\draw[black!60!green] (4.5, -3.5) node {\small $1$};
\end{tikzpicture}
\end{equation}
The rational component mapping with degree $2$ must carry two of the ramification points.
The total number of loci of the form \eqref{eqn:Ccurveshape2} is given by
\begin{equation} \label{eqn:numloci3}
    \binom{b[d,\ell]}{2} = \frac{1}{2} \frac{b[d,\ell]!}{(b[d,\ell]-2)!} \,.
\end{equation}
The locus $\overline{\mathcal{H}}_{(\Gamma, \Gamma')}$ is parameterized by the space
\begin{equation} \label{eqn:parloci3}
    \overline{\mathcal{H}}_{g-1,d[g,\ell],n[g,\ell]+1,r+1} \times  \underbrace{\overline{\mathcal{H}}_{0,2,2,2}}_{=\{\mathrm{pt}\}}\,.
\end{equation}
Again the multiplicity is $1$, but there are $4$ non-isomorphic $\widehat{\Gamma}_0$-structures on the graph $\Gamma$:
one of the two edges of $\widehat{\Gamma}_0$ must go to the edge of $\Gamma$ connecting the genus $g-1$ vertex to the vertex containing the last marking. The other can go to either one of the two edges incident to the vertex with the degree $2$ map. So overall, in the disjoint union in the top left of the diagram \eqref{eqn:Cfibrediagram} there are
\[
 2 \frac{b[d,\ell]!}{(b[d,\ell]-2)!}
\]
components that can contribute.

By the strategy explained above,  the contribution to the degree of $\tau_{g,\ell,r}^* [(C,D)]$ coming from each locus above is given by the degree of  the map
\[
\overline{\mathcal{H}}_{g-1,d[g,\ell],n[g,\ell]+1,r+1}  \to \overline{\mathcal{M}}_{g-1,n[g,\ell]+1} \times \overline{\mathcal{M}}_{0,n[g,\ell]-r+1}
\]
The total part of the degree arising from Contribution 3 is given by
\[
 \frac{1}{2} \cdot 2 \cdot \frac{b[d,\ell]!}{(b[d,\ell]-2)!} \cdot 2 \cdot \deg \tau_{g-1,\ell+1,r+1} =  2 b[d,\ell]! \cdot  \mathsf{Tev}_{g-1,\ell+1,r+1}\,.
\]

\vspace{10pt}
\noindent \textbf{Exclusion of other components.}
\vspace{10pt}

To conclude, we must prove that
Contributions 1, 2 and 3 are the only loci in the diagram \eqref{eqn:Cfibrediagram} that can contribute. 

Let $\overline{\mathcal{H}}_{(\Gamma, \Gamma')}$ be a locus in \eqref{eqn:Cfibrediagram} dominating the moduli space $$\overline{\mathcal{M}}_{g-1,n[g,\ell]+1} \times \overline{\mathcal{M}}_{0,n[g,\ell]-r+1}\,.$$
There must then be a vertex $v_{g-1} \in V(\Gamma)$ such that the corresponding factor of $\overline{\mathcal{H}}_{(\Gamma, \Gamma')}$ parameterizes covers from a genus $g-1$ curve. Since the $n[g,\ell]-1$ first markings will go to the vertex $v_{g-1}$ after forgetting the ramification points and stabilizing, their images in $\Gamma'$ will similarly stabilize to a unique vertex $v_0$. Therefore, in the cover $\Gamma \to \Gamma'$, the vertex $v_g$ maps to $v_0$. Next, we observe that the map 
\begin{equation} \label{eqn:dominantmap}
\overline{\mathcal{H}}_{(\Gamma, \Gamma')} \to \overline{\mathcal{M}}_{g-1,n[g,\ell]+1} \times \overline{\mathcal{M}}_{0,n[g,\ell]-r+1}
\end{equation}
only depends upon the factors of $\overline{\mathcal{H}}_{(\Gamma, \Gamma')}$ lying over the vertex $v_0$. Indeed, the map to $\overline{\mathcal{M}}_{g-1,n[g,\ell]+1}$ only depends on the factor for $v_{g-1}$. On the other hand, since the stabilization of $\Gamma'$, after forgetting the branch points, consists of the single vertex $v_0$, the factors of $\overline{\mathcal{H}}_{(\Gamma, \Gamma')}$ over $v_0$ determine the component of the map \eqref{eqn:dominantmap} to $\overline{\mathcal{M}}_{0,n[g,\ell]-r+1}$. 

Since the factors over $v_0$  form a finite cover of the moduli space $\overline{\mathcal{M}}_{0,n(v_0)}$ associated to $v_0$, the valence $n(v_0)$ must satisfy
\[
n(v_0) - 3 \geq \dim\ \overline{\mathcal{M}}_{g-1,n[g,\ell]+1} \times \overline{\mathcal{M}}_{0,n[g,\ell]-r+1} = b[g,\ell] + n[g,\ell] -r -4
\]
in order for the map \eqref{eqn:dominantmap} to be dominant. 

A short computation shows that the valence condition is only possible in one of the following cases:
\begin{enumerate}[label=\alph*)]
    \item[(i)] $\Gamma'$ has two other vertices $w_1, w_2$, both of valence $3$, %Cont 1,3
    \item[(ii)] $\Gamma'$ has one other vertex $w$, of valence $4$, %Cont 2
    \item[(iii)] $\Gamma'$ has one other vertex $w$, of valence $3$.
\end{enumerate}

We further observe that for a simple branch point at the vertex $v_0$ whose ramification point is \emph{not} on the component associated to $v_{g-1}$, the position of the branch point does not affect the image under the map \eqref{eqn:dominantmap}. Therefore, in cases (i) and (ii) all the simple ramification points over $v_0$ are located at the vertex $v_{g-1}$, whereas in case (iii) at most one of them can be on a separate component.

Finally, we observe that for a leaf of the graph $\Gamma'$ which is not equal to $v_0$ (so one of the vertices $w,w_1,w_2$ above), the leaf must contain at least two of the simple branch points. Indeed, by the condition that there is a $\widehat{\Gamma}_0$ structure on $\Gamma \to \Gamma'$, there must be a circular path from $v_{g-1}$ to itself in $\Gamma$ whose edges surject to the edges of $\Gamma'$. Thus over a leaf there must be a vertex with at least two edges adjacent, which needs at least two ramification points on that vertex.

\vspace{10pt}
\noindent \textbf{Case (i)} There are two possibilities $\Gamma'_1$
and $\Gamma_2'$ for the graph $\Gamma'$.
\[
\Gamma_1' \ = \ 
\begin{tikzpicture}[baseline = -0.1 cm]
\draw (0,0) to (4,0);
\draw (2,0) -- (2,-0.7);
\draw (0,0) -- (0,-0.7);
\draw (0,0) -- (0.2,-0.7);
\draw (0,0) -- (-0.2,-0.7);
\draw (4,0) -- (4.7,0.2);
\draw (4,0) -- (4.7,-0.2);
\filldraw[white] (0,0) circle (0.4cm) node {$v_0$};
\filldraw[white] (2,0) circle (0.4cm) node {$w_1$};
\filldraw[white] (4,0) circle (0.4cm) node {$w_2$};
\draw (0,0) circle (0.4cm) node {$v_0$};
\draw (2,0) circle (0.4cm) node {$w_1$};
\draw (4,0) circle (0.4cm) node {$w_2$};
\end{tikzpicture}
\]
By the general comments above, the two markings at $w_2$ must be branch points, and we see that there are at least $b[g,\ell]-3$ simple ramification points at $v_{g-1}$. The configuration is only possible if the degree at the vertex $v_{g-1}$ remains at $d[g,\ell]$, otherwise the number of ramification points would drop by at least $4$. The marking at $w_1$ is then forced to be the image of the marking $p_{n[g,\ell]}$, since otherwise this marking would be contained in the genus $g-1$ component of the curve, contradicting  the shape \eqref{eqn:Ccurveshape} we have fixed before. We conclude that only the pair $(\Gamma, \Gamma_1')$ of Contribution 3 has the specified shape.

\vspace{10pt}
\[
\Gamma_2' \ = \ 
\begin{tikzpicture}[baseline = -0.1 cm]
\draw (0,0) to (4,0);
\draw (2,0) -- (2,-0.7);
\draw (2,0) -- (2,-0.7);
\draw (2,0) -- (2.2,-0.7);
\draw (2,0) -- (1.8,-0.7);
\draw (4,0) -- (4.7,0.2);
\draw (4,0) -- (4.7,-0.2);
\draw (0,0) -- (-0.7,0.2);
\draw (0,0) -- (-0.7,-0.2);
\filldraw[white] (0,0) circle (0.4cm) node {$v_0$};
\filldraw[white] (2,0) circle (0.4cm) node {$w_1$};
\filldraw[white] (4,0) circle (0.4cm) node {$w_2$};
\draw (0,0) circle (0.4cm) node {$w_1$};
\draw (2,0) circle (0.4cm) node {$v_0$};
\draw (4,0) circle (0.4cm) node {$w_2$};
\end{tikzpicture}
\]
The markings at $w_1, w_2$ belonging to branch points, so we have precisely $b[g,\ell]-4$ branch points at $v_0$. Then the component $v_{g-1}$ must have degree $d[g,\ell]-1$, so that there is an additional vertex of genus $0$ over $v_0$ mapping with degree $1$. We are then forced to be in the case of Contribution 1.

\vspace{10pt}
\noindent \textbf{Case (ii)} The shape of the graph $\Gamma'$ is as follows:
\[
\Gamma' \ = \ 
\begin{tikzpicture}[baseline = -0.1 cm]
\draw (2,0) to (4,0);
\draw (2,0) -- (2,-0.7);
\draw (2,0) -- (2,-0.7);
\draw (2,0) -- (2.2,-0.7);
\draw (2,0) -- (1.8,-0.7);
\draw (4,0) -- (4.7,0.2);
\draw (4,0) -- (4.7,-0.2);
\draw (4,0) -- (4.7,0);
\filldraw[white] (2,0) circle (0.4cm) node {$w_1$};
\filldraw[white] (4,0) circle (0.4cm) node {$w_2$};
\draw (2,0) circle (0.4cm) node {$v_0$};
\draw (4,0) circle (0.4cm) node {$w$};
\end{tikzpicture}
\]
Since there are at least $b[g,\ell]-3$ simple branch points at $v_0$,   the component $v_{g-1}$ 
must map with full degree $d[g,\ell]$. Therefore,  one of the three markings at $w$
is forced 
to be the image of marking $p_{n[g,\ell]}$. 
Moreover, the point $p_{n[g,\ell]}$ must lie on the unique loop in $\Gamma$ which goes through the vertex over $w$ containing the two ramification points. We are in the case of Contribution 2.

\vspace{10pt}
\noindent \textbf{Case (iii)} The graph $\Gamma'$ has the following shape:
\[
\Gamma' \ = \ 
\begin{tikzpicture}[baseline = -0.1 cm]
\draw (2,0) to (4,0);
\draw (2,0) -- (2,-0.7);
\draw (2,0) -- (2,-0.7);
\draw (2,0) -- (2.2,-0.7);
\draw (2,0) -- (1.8,-0.7);
\draw (4,0) -- (4.7,0.2);
\draw (4,0) -- (4.7,-0.2);
%\draw (4,0) -- (4.7,0);
\filldraw[white] (2,0) circle (0.4cm) node {$w_1$};
\filldraw[white] (4,0) circle (0.4cm) node {$w_2$};
\draw (2,0) circle (0.4cm) node {$v_0$};
\draw (4,0) circle (0.4cm) node {$w$};
\end{tikzpicture}
\]
The markings at $w$ are branch points, so the remaining $b[g,\ell]-2$ branch points are at $v_0$. As we have seen before, in case (iii), at most one of the associated ramification points can be on a component not equal to $v_{g-1}$, which still forces $b[g,\ell]-3$ ramification points on $v_{g-1}$.  The component $v_{g-1}$ must map with full degree $d[g,\ell]$. Hence we arrive at a contradiction, since the marking $p_{n[g,\ell]}$ lies over $v_0$ and thus in $v_{g-1}$. The curve can not have the  shape \eqref{eqn:Ccurveshape}.

\vspace{10pt}
 Contributions 1,2, and 3 are therefore the only nonvanishing terms in the intersection. After summing the three contributions to the degree of $\tau_{g,\ell,r}$ and dividing by $b[d,\ell]!$ , we obtain the recursion
 stated in the Proposition.
\end{proof}

\section{Proof of Theorems \ref{vvtt3}-\ref{vvtt7} for $\ell\geq 0$}

\subsection{The genus 0 case}
The recursion of Proposition \ref{recc}
reduces the calculation of  the Tevelev degree
$\tau_{g,\ell,r}$ to the genus $g=0$ case.
Indeed, for $n[g,\ell]-r+1 \geq 3$, we can apply the recursion and lower the genus and, for $n[g,\ell]-r+1 < 3$,  $\mathsf{Tev}_{g,\ell,r}$ is defined to be $0$.
In genus $0$, since
$$1 \leq r \leq d[0,\ell] = \ell+1\, $$
by \eqref{ffrd},
we must have $\ell\geq 0$.
The Tevelev degrees in genus 0
for $\ell\geq 0$ 
are determined 
by the following result.

\begin{proposition} \label{genus-0-case}
For $\ell \geq 0$ and $1\leq r \leq \ell +1$ we have 
$$ \mathsf{Tev}_{0,l,r}=1.$$
\end{proposition}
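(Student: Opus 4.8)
The plan is to establish the base case by a direct count of covers, reducing it to elementary linear algebra. First I would unwind the definition $\mathsf{Tev}_{0,\ell,r} = \deg(\tau_{0,\ell,r})/(b[0,\ell])!$, where $b[0,\ell]=2\ell$, $d[0,\ell]=\ell+1$, and $n[0,\ell]=2\ell+3$. Over $\mathbb{C}$, the degree of a dominant generically finite map between varieties of equal dimension (dominance will be confirmed by the construction below) equals the number of points in a general fibre, each counted once by generic smoothness. A point of $\overline{\mathcal{H}}_{0,d[0,\ell],n[0,\ell],r}$ carries its $2\ell$ simple ramification points as an ordered set, and these are recovered as the critical points of the underlying cover $\pi$; since a general cover has no nontrivial automorphisms and a general configuration of distinct points on $\mathbb{P}^1$ is rigid, a single unordered cover $\pi$ accounts for exactly $(2\ell)!$ labeled points of the Hurwitz space. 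Hence the factor $(2\ell)!$ cancels, and it suffices to show there is exactly one degree $d=\ell+1$ cover $\pi\colon \mathbb{P}^1 \to \mathbb{P}^1$ whose source markings realize a fixed general point of $\overline{\mathcal{M}}_{0,n[0,\ell]}$ and whose induced images realize a fixed general point of $\overline{\mathcal{M}}_{0,n[0,\ell]-r+1}$, with the distinguished $r$ markings lying in one fibre.

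Next I would turn this into linear algebra. Write $\pi = [f:g]$ for a pair $(f,g) \in H^0(\mathbb{P}^1, \mathcal{O}_{\mathbb{P}^1}(d))^{\oplus 2}$, a vector space of dimension $2(d+1) = 2\ell + 4$, well defined up to the common scaling by $\mathbb{C}^*$. I normalize the source by fixing three of the $p_i$, and normalize the target by using its $\mathrm{PGL}_2$ to fix three of the $n[0,\ell]-r+1$ image points (the common image of the $r$ distinguished markings, together with two further images). Each condition ``$\pi(p_i)$ equals the prescribed target value $c_i$'' becomes the single linear equation $f(p_i) - c_i\, g(p_i) = 0$ on $(f,g)$ (read as $g(p_i)=0$ when $c_i = \infty$), giving exactly $n[0,\ell] = 2\ell + 3$ equations. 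A space of dimension $2\ell+4$ cut out by $2\ell+3$ independent linear equations has a one-dimensional solution space, i.e.\ a single point of $\mathbb{P}^{2\ell+3}$, so there is a unique $(f,g)$ up to scaling and hence a unique cover $\pi$: the count is $1$.

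The main obstacle is genericity, which must be handled carefully. I must verify that for general data (i) the $2\ell+3$ linear conditions are independent, so the solution space is exactly one-dimensional, and (ii) the resulting $(f,g)$ defines a genuine simple Hurwitz cover, namely has no common factor (so the degree is exactly $d$), with distinct markings and distinct simple branch points. For (i), any dependence $\sum_i \lambda_i\big(f(p_i)-c_i g(p_i)\big) \equiv 0$ forces $\sum_i \lambda_i p_i^a = 0$ and $\sum_i \lambda_i c_i p_i^a = 0$ for all $0 \leq a \leq d$, an overdetermined homogeneous system in the $\lambda_i$ that admits only $\lambda=0$ for general $c_i$ and distinct $p_i$; since the independence locus is open, it suffices to exhibit one configuration with this property. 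For (ii) each failure (a common root, coincident branch points, or non-simple branching) is a proper closed condition on the data and is avoided for general data, so the unique solution lies in the interior Hurwitz space. Finally, the same construction produces a cover over every general point of $\overline{\mathcal{M}}_{0,n[0,\ell]} \times \overline{\mathcal{M}}_{0,n[0,\ell]-r+1}$, which shows $\tau_{0,\ell,r}$ is dominant; combined with the fibre count this yields $\deg(\tau_{0,\ell,r}) = (2\ell)!$ and therefore $\mathsf{Tev}_{0,\ell,r} = 1$.
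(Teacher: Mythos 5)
Your proposal is correct and follows essentially the same route as the paper: both realize the candidate covers as points of $\mathbb{P}\big(H^0(\mathbb{P}^1,\mathcal{O}_{\mathbb{P}^1}(d))\oplus H^0(\mathbb{P}^1,\mathcal{O}_{\mathbb{P}^1}(d))\big)\cong\mathbb{P}^{2\ell+3}$, observe that the $n[0,\ell]=2\ell+3$ incidence conditions are hyperplanes, and conclude uniqueness from the dimension count together with a transversality and genericity check. The one place where the paper is tighter is your step (i): instead of exhibiting a configuration making the Vandermonde-type system nondegenerate, the paper identifies the tangent space at a cover $\pi$ with $H^0(\pi^*\mathsf{Tan}_{\mathbb{P}^1}(-p_{n-r+1}-\cdots-p_n))$, a line bundle of degree $n-r-1$, so the remaining $n-r$ points impose independent conditions because a line bundle of degree $-1$ on $\mathbb{P}^1$ has no sections --- a configuration-free argument you may prefer when writing up the details.
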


\begin{proof} Let $(\mathbb{P}^1, p_1,\ldots,p_n)$ be a  genus 0
curve with $n$ distinct markings, for
$$n=n[0,\ell]=3+2\ell \geq 3\, .$$
Let $\mathcal{M}_{d,r}$ be the moduli space of maps
$$\pi: (\mathbb{P}^1,p_1,\ldots,p_n) \rightarrow \mathbb{P}^1$$
of degree $d[0,\ell]=1+\ell$ satisfying
$$\pi(p_{n-r+1}) = \ldots = \pi(p_n)= [1,0]
\in \mathbb{P}^1\, .$$
The moduli space $\mathcal{M}_{d,r}$ is
the nonsingular $n-r$ dimensional quasi-projective subvariety 
$$\mathcal{M}_{d,r} \subset 
\mathbb{P}(H^0(\mathbb{P}^1,\mathcal{O}_{\mathbb{P}^1}(d)) \oplus H^0(\mathbb{P}^1,\mathcal{O}_{\mathbb{P}^1}(d)
))
\ $$ defined by  pairs of sections $(s_0,s_1)$
of $H^0(\mathbb{P}^1,\mathcal{O}_{\mathbb{P}^1}(d))$ satisfying:
\begin{enumerate}
    \item [(i)] $s_0$ and $s_1$ have no
    common zeros,
    \item[(ii)] $s_1$ vanishes at the $r$ points
    $p_{n-r+1}$,\ldots, $p_n$.
\end{enumerate}
Conditions (ii) are
linear, so 
$\mathcal{M}_{d,r}\subset \mathbb{P}^{2d+1-r}= \mathbb{P}^{n-r}$.

The tangent space of $\mathcal{M}_{d,r}$
at $[\pi]\in \mathcal{M}_{d,r}$
is 
$$ \mathsf{Tan}_\pi =H^0(\mathbb{P}^1, \pi^*(\mathsf{Tan}_{\mathbb{P}^1})(-p_{n-r+1}-\ldots-p_n))\, .$$ 
The degree of the line bundle
$\pi^*(\mathsf{Tan}_{\mathbb{P}^1})(-p_{n-r+1}-\ldots-p_n)$
is $$2(1+\ell) - r =n-r-1\, .$$ 
Since the
points $p_1,\ldots,p_{n-r}$ impose independent
conditions on sections of $\pi^*(\mathsf{Tan}_{\mathbb{P}^1})(-p_{n-r+1}-\ldots-p_n)$,
the differential
of the evaluation map
$$\text{ev}:\mathcal{M}_{d,r} \rightarrow (\mathbb{P}^1)^{n-r}\, , \ \ \ \ [\pi]\mapsto (\pi(p_1),\ldots,\pi(p_{n-r}))$$
is surjective.

Let $x_1\,\ldots,x_{n-r}\in \mathbb{P}^1$
be general points. For $1\leq i \leq n-r$, let
$$\mathsf{H}_i\subset \mathcal{M}_{d,r}$$
be the locus of maps $\pi$ satisfying
$$\pi(p_i)=x_i\, .$$
The closure 
$\overline{\mathsf{H}}_i\subset
\mathbb{P}^{2d+n-r}$ 
is a linear space of codimension 1.
Since 
\begin{equation}\label{xxkx}
\mathsf{ev}^{-1}(x_1,\ldots,x_{n-r})\subset \mathcal{M}_{d,r}
\end{equation}
is both transverse and linear,  $\mathsf{ev}^{-1}(x_1,\ldots,x_{n-r})$ is
either empty or a reduced point. Since the image of
$\text{ev}$ is dense, 
$\mathsf{ev}^{-1}(x_1,\ldots,x_{n-r})$ must be a reduced point.{\footnote{Alternatively,  the empty case is easily seen to be impossible.
The hyperplane closures
 have to intersect in the linear space 
$$\overline{\mathcal{M}}_{d,r} =\mathbb{P}^{n-r} 
\subset 
\mathbb{P}(H^0(\mathbb{P}^1,\mathcal{O}_{\mathbb{P}^1}(d)) \oplus H^0(\mathbb{P}^1,\mathcal{O}_{\mathbb{P}^1}(d)
)\, .$$
Using the standard description of
$$\partial\overline{\mathcal{M}}_{d,r} = \overline{\mathcal{M}}_{d,r}\setminus \mathcal{M}_{d,r}$$
via maps of lower degrees, the intersection
of $\overline{\mathsf{H}}_1\cap \ldots\cap
\overline{\mathsf{H}}_{n-r}$ is easily seen
to be disjoint from $\partial\overline{\mathcal{M}}_{d,r}$ using the
genericity of 
$q_1,\ldots, q_{n-r}$.}}

Finally, a dimension count shows
the single reduced point
\eqref{xxkx}  corresponds to
a map with only simple ramification points.
Therefore, $\mathsf{Tev}_{0,\ell,r}=1$.
\end{proof}

\subsection{Induction argument}
We are ready to completely calculate
the Tevelev degrees in case $\ell \geq 0$. Because of the 
form of the recursion \eqref{eqn:tevrecursion}, we 
consider all the cases with $\ell \geq 0$ at once instead of proving Theorems \ref{vvtt3}, \ref{vvtt6}, and \ref{vvtt7} separately.

\vspace{10pt}
\noindent{\bf Theorem.} 
{\em For $g \geq 0$, $ \ell \geq 0$ and $1 \leq r \leq \ell +1 +g$, we have}
$$
\mathsf{Tev}_{g,l,r}=
\begin{cases}
% 2^g- \sum_{j=-1}^{r-2} {g \choose j} \ \ \ \ \text{for }\ \ell =0,
% \\  \mathsf{Tev}_{g,0,r-l} \ \ \ \ \ \ \ \ \ \ \ \text{for } \ell \leq r,
% \\ 2^g \ \ \ \ \ \ \ \ \ \ \ \ \ \ \ \ \ \ \ \ \text{for } \ell\geq r.
2^g- \sum_{j=0}^{r-2} {g \choose j} &\text{for } \ell =0,
\\  \mathsf{Tev}_{g,0,\max(r-l,1)} & \text{for } \ell \leq r,
\\ 2^g  & \text{for } \ell\geq r.
\end{cases}
$$
%\vspace{5pt}

\begin{proof}
We proceed by induction on $g$.
The $g=0$ case is
covered by Proposition
\ref{genus-0-case}. Suppose $g > 0$. 
\begin{enumerate}[label=\underline{Step \arabic*}]
 \item We consider the case $ \ell \geq r \geq 1$. If we start with $ \ell \geq r$, the same inequality persists for the
 recursion, and we have
 $$
 \mathsf{Tev}_{g,\ell,r}= \mathsf{Tev}_{g-1,\ell,\max(1,r-1)}+ \mathsf{Tev}_{g-1,\ell+1,r+1}= 2^{g-1}+ 2^{g-1}=2^g\,.
 $$
 \item We consider case $0\leq \ell \leq r$. We have already checked the case $\ell =r$. Suppose $ \ell < r$. If we start with $ \ell < r$, then the non-strict inequality $ \ell \leq r$ persists in the first step of the recursion. We have
 \begin{align*}
\mathsf{Tev}_{g,\ell,r} 
& =\mathsf{Tev}_{g-1,\ell ,\max(r - 1,1)} + \mathsf{Tev}_{g-1,\ell+1,r+1}\\
& = 2^{g-1}- \sum_{j=0}^{ \max(r-3-l,-1 - \ell)  } { g-1 \choose j} + 2^{g-1} - \sum_{j=0}^{ r-2-l } { g-1 \choose j} \\
& = 2^g - \sum_{j=0}^{ r-2-l } \Bigg[ { g-1 \choose j-1} + {g-1 \choose j} \Bigg]\\ &= 2^g - \sum_{j=0}^{ r-2-l } { g \choose j}\ ,
 \end{align*}
 where in the first equality we are using the inductive step applied to $g$. 
 \end{enumerate}
 The proof is complete.
\end{proof}

\subsection{Lines in projective space} \label{Sect:Gale}
By the formulas of Theorems \ref{vvtt6} and \ref{vvtt7}, we have
\begin{equation}\label{vkke}
\mathsf{Tev}_{g,\ell,g+1+\ell}=1\, .
\end{equation}
The Tevelev degree is defined via the map
$$\tau_{g,\ell,g+1+\ell}: \overline{\mathcal{H}}_{g,g+1+\ell,g+3+2\ell,
g+1+\ell} \rightarrow
\overline{\mathcal{M}}_{g,g+3+2\ell}\times 
\overline{\mathcal{M}}_{0,3+\ell}\, .$$
There is a simple geometric reason for the
Tevelev degree to be 1 here.

Let $(C,p_1,\ldots,p_{g+3+2\ell})$ be a general
nonsingular pointed curve.
The last $g+1+\ell$ markings (associated to $r$) determine a line bundle $L$ on $C$. Consider 
the complete linear series $\mathbb{P}(H^0(C,L))$
of dimension $1+\ell$ by Riemann-Roch.

\vspace{5pt}
\noindent $\bullet$ There is a distinguished element
$$\zeta =[p_{3+\ell}+ \ldots +p_{g+3+2\ell}]
\in \mathbb{P}(H^0(C,L))\, .$$

\vspace{5pt}
\noindent $\bullet$ There are $2+\ell$ distinguished
hyperplanes
$$\mathsf{H}_1,\ldots, \mathsf{H}_{2+\ell} \subset \mathbb{P}(H^0(C,L))$$
determined by the points
$p_1,\ldots,p_{2+\ell} \in C$.

\vspace{5pt}
\noindent $\bullet$ There is a projective space $\mathbb{P}(\mathsf{Tan}_\zeta)$
of dimension $\ell$ parameterizing
lines 
$$\mathsf{P}\subset \mathbb{P}(H^0(C,L))$$ passing through $\zeta$.

\vspace{5pt}
For a general such line $\mathsf{P}$, we obtain
a $3+\ell$ pointed  curve of genus 0,
$$(\mathsf{P}, \zeta, \mathsf{P}\cap \mathsf{H}_1,\ldots, \mathsf{P}\cap\mathsf{H}_{2+\ell})\, .$$
The associated rational map
\begin{equation}\label{nn33x}
\mathbb{P}(\mathsf{Tan}_\zeta) 
\dashrightarrow
\mathcal{M}_{0,3+\ell}
\end{equation}
is easily seen to be birational (of degree 1). 
The degree is 
$\mathsf{Tev}_{g,\ell,g+1+\ell}$.

The map \eqref{nn33x} has the flavour of a Gale transformation.
In the $\ell=1$ case, a direct connection to the Gale transformation can be made. Whether the  birationality of
\eqref{nn33x} can be explained for all $\ell$ in
terms of the Gale transformation is a question
left to the reader.

\section{Path counting}
\label{f665}
\subsection{Notation}
We consider here the case $\ell \leq 0$ and prove Theorem \ref{n33r}. Before
presenting the proof, we briefly recall the notation of Section \ref{dd13}.

For
$\ell \leq 0$ and $r \geq 1 $, we have $$n[g, \ell]=3+g+2 \ell\, .$$ After imposing $n[g, \ell]-(r-1) \geq 3$, we obtain the condition
\begin{equation*}
g \geq r-2 \ell-1\, .
\end{equation*}
Following the notation of Section \ref{dd13},
$$g[ \ell, r]= r - 2 \ell -1\, . $$
We also have $$d[g[\ell,r],\ell]=g[\ell,r]+1+\ell =
r-\ell>0\,.$$ 

Define the infinite vectors $\mathsf{T}_{\ell, r}$ and $\mathsf{E}_r$ by
$$
\mathsf{T}_{\ell,r}[j]= \mathsf{Tev}_{g[\ell,r]+j, \ell,r}
\ \ \ \text{and} \ \ \
\mathsf{E}_{r}[j] = 2^{r+j-1} - \sum_{i=0}^{r-2}\binom{r+j-1}{i}\, 
$$
for $j \geq 0$. 
We set $$
\mathsf{T}_{\ell,r}[j]= 0 
\ \ \ \text{and} \ \ \ \mathsf{E}_{r}[j]=0
$$
for $j < 0$.
For all $j \in \Z$ and $r \geq 1$, we have
\begin{equation} \label{Pascal-type-law-for-E}
\mathsf{E}_{r+1}[j+1] = \mathsf{E}_{r+1}[j] +\mathsf{E}_{r}[j+1]\, .
\end{equation}

\subsection{Proof of Theorem \ref{n33r} for \texorpdfstring{$\ell\leq 0$}{l<=0}}
By Theorems \ref{vvtt6} and \ref{vvtt7} for $\ell \geq 0$, we already know  
$$
\mathsf{T}_{0,r}= \mathsf{E}_{r} =\sum_{\gamma\in \mathsf{P}(0,r)} 
\mathsf{E}_{\mathsf{Ind}(\gamma)}\ 
$$
holds.

We  prove the formula of Theorem \ref{n33r} for the coefficient $$\mathsf{T}_{\ell,r}[j] 
\ \  \text{for}\ \ 
j\geq 0$$ by induction on $g=g[\ell,r]+j$. When $g[\ell,r]+j=0$, we must have $(\ell,r)=(0,1)$ and $j=0$. 
So the base of the induction is established.

Suppose now 
$g[\ell,r]+j>0$. The proof of the induction step is split into two cases:

\vspace{5pt}
\noindent \textbf{Case 1.}
Suppose $r=1$ and $ \ell \leq 0$. 

\vspace{5pt}
\noindent $\bullet$ If $j=0$, we have 
\begin{align*}
\mathsf{T}_{\ell,1}[0]&= \mathsf{Tev}_{g[\ell,1],\ell,1}\\ &= \mathsf{Tev}_{g[\ell,1]-1,\ell+1,2} \\
&= \mathsf{T}_{\ell+1,2}[0] \\
&=\sum_{\gamma\in \mathsf{P}(\ell +1 ,2)} 
\mathsf{E}_{\mathsf{Ind}(\gamma)}[0] \\
&=\sum_{\gamma\in \mathsf{P}(\ell ,1)} 
\mathsf{E}_{\mathsf{Ind}(\gamma)}[0]\, .
\end{align*}
The second equality follows from \eqref{eqn:tevrecursion} since $$\mathsf{Tev}_{g[\ell,1]-1,\ell,1}=\mathsf{T}_{\ell,1}[-1]=0\, .$$
The last equality follows from    $\mathsf{E}_{\mathsf{Ind}(\gamma)}[0]=1$ for all $\gamma\in \mathsf{P}(\ell ,1) \cup \mathsf{P}(\ell +1 ,2)$ and the fact that $\mathsf{P}(\ell +1 ,2)$ and $\mathsf{P}(\ell ,1)$ are in bijection.

\vspace{5pt}
\noindent $\bullet$
If $j > 0$, we have 
\begin{align*}
 \mathsf{T}_{\ell,1}[j]&= \mathsf{Tev}_{g[\ell,1]+j,\ell,1}\\
 & =\mathsf{Tev}_{g[\ell,1]+j-1,\ell,1}+\mathsf{Tev}_{g[\ell,1]+j-1,\ell+1,2}\\
&=\mathsf{T}_{\ell,1}[j-1]+\mathsf{T}_{\ell+1,2}[j] \\
& =\sum_{\gamma\in \mathsf{P}(\ell,1)} 
\mathsf{E}_{\mathsf{Ind}(\gamma)}[j-1]+ \sum_{\gamma\in \mathsf{P}(\ell +1 ,2)} 
\mathsf{E}_{\mathsf{Ind}(\gamma)}[j] \\
& =\sum_{\gamma\in \mathsf{P}(\ell,1)} 
\mathsf{E}_{\mathsf{Ind}(\gamma)}[j-1]+ \sum_{\gamma\in \mathsf{P}(\ell ,1)} 
\mathsf{E}_{\mathsf{Ind}(\gamma)-1}[j] \\
& = \sum_{\gamma\in \mathsf{P}(\ell,1)} 
\mathsf{E}_{\mathsf{Ind}(\gamma)}[j]\, .
\end{align*}
In the fifth equality, we use the natural bijection between $\mathsf{P}(\ell +1 ,2)$ and $\mathsf{P}(\ell ,1)$ given by completing a path that ends in $(\ell +1 ,2)$ to a path that ends $(\ell ,1)$ by stepping with $\mathsf{D}$. In the last equality, we use \eqref{Pascal-type-law-for-E}.

\vspace{5pt}
\noindent \textbf{Case 2.} 
Suppose $r > 1$ and $\ell\leq 0$. 

\vspace{5pt}
\noindent For all $j \geq 0$, we have a similar chain of equalities:
\begin{align*}
\mathsf{T}_{\ell,r}[j]&= \mathsf{Tev}_{g[\ell,r]+j,\ell,r}\\
 & =\mathsf{Tev}_{g[\ell,r]+j-1,\ell,r-1}+\mathsf{Tev}_{g[\ell,r]+j-1,\ell+1,r+1} \\
&=\mathsf{T}_{\ell,r-1}[j]+\mathsf{T}_{\ell+1,r+1}[j]\\
& =\sum_{\gamma\in \mathsf{P}(\ell,r-1)} 
\mathsf{E}_{\mathsf{Ind}(\gamma)}[j]+ \sum_{\gamma\in \mathsf{P}(\ell +1 ,r+1)} 
\mathsf{E}_{\mathsf{Ind}(\gamma)}[j]\\
& = \sum_{\gamma\in \mathsf{P}(\ell,r)} 
\mathsf{E}_{\mathsf{Ind}(\gamma)}[j]\, ,
\end{align*}
% \end{enumerate}
which completes the induction step. \qed

\vspace{12pt}

%\subsection{Table of values of $\mathsf{T}_{\ell,r}$ for $\ell<0$}

\begin{table}[ht]
\footnotesize
\begin{tabular}
{c | c | c | c | c | c |}
\backslashbox{$\ell$}{$r$} & 1 & 2 & 3 & 4 & 5  \\
\hline
-1      
& $\mathsf{E}_{3}$    
& $2\mathsf{E}_3$
& $2\mathsf{E}_3+\mathsf{E}_4$
& \makecell{$2 \mathsf{E}_3+ \mathsf{E}_4$ \\ $+\mathsf{E}_5$}
& \makecell{$2 \mathsf{E}_3+ \mathsf{E}_4$
\\ + $ \mathsf{E}_5  +\mathsf{E}_6$} \\
\hline
-2      
& $2\mathsf{E}_4$     
& $2 \mathsf{E}_3 + 3 \mathsf{E}_4$
& \makecell{$4 \mathsf{E}_3+ 4 \mathsf{E}_4$ \\ $+ \mathsf{E}_5 $}
& \makecell{$6 \mathsf{E}_3 + 5 \mathsf{E}_4$ \\ $ + 2 \mathsf{E}_5+ \mathsf{E}_6$}
& \makecell{$8 \mathsf{E}_3+6 \mathsf{E}_4$ \\$+3 \mathsf{E}_5 +2 \mathsf{E}_6$ \\ $+\mathsf{E}_7$} \\
\hline
-3 
& $2 \mathsf{E}_4 + 3 \mathsf{E}_5$
& \makecell{$4 \mathsf{E}_3+6\mathsf{E}_4$ \\ $+4 \mathsf{E}_5$} 
& \makecell{$10 \mathsf{E}_3+11 \mathsf{E}_4$ \\ $+6 \mathsf{E}_5+\mathsf{E}_6$}
& \makecell{$18 \mathsf{E}_3 + 17 \mathsf{E}_4$ \\ $ + 9 \mathsf{E}_5 + 3 \mathsf{E}_6$ \\ $ + \mathsf{E}_7$}
& \makecell{$ 28 \mathsf{E}_3+ 24 \mathsf{E}_4$ \\ $+13 \mathsf{E}_5 + 6 \mathsf{E}_6$ \\ $ + 3 \mathsf{E}_7$} \\
\hline
-4 
& \makecell{$ 4 \mathsf{E}_4 + 6 \mathsf{E}_5$ \\  $+4 \mathsf{E}_6 $}
& \makecell{$10 \mathsf{E}_3+ 15 \mathsf{E}_4$ \\ $  +12 \mathsf{E}_5+5 \mathsf{E}_6$  }
& \makecell{$ 28 \mathsf{E}_3 + 32 \mathsf{E}_4$ \\  $+ 21 \mathsf{E}_5 + 8 \mathsf{E}_6$ \\ $+ \mathsf{E}_7$ } 
& \makecell{$56 \mathsf{E}_3 + 56 \mathsf{E}_4$ \\ $ + 34 \mathsf{E}_5 +14 \mathsf{E}_6$ \\ $ + 4 \mathsf{E}_7 + \mathsf{E}_8$}
& \makecell{$ 96 \mathsf{E}_3 + 88 \mathsf{E}_4$ \\ $ + 52 \mathsf{E}_5 + 24 \mathsf{E}_6$ \\ $ + 10 \mathsf{E}_7 + 4 \mathsf{E}_8$ \\ $ \mathsf{E}_9$ } \\
\hline
-5 
& \makecell{$10 \mathsf{E}_4 + 15 \mathsf{E}_5$ \\ $ + 12 \mathsf{E}_6 + 5\mathsf{E}_7 $ }
& \makecell{$28 \mathsf{E}_3 + 42 \mathsf{E}_4$ \\ $ + 36 \mathsf{E}_5 + 20 \mathsf{E}_6$ \\ $ + 6 \mathsf{E}_7 $ }
& \makecell{$ 84 \mathsf{E}_3 + 98 \mathsf{E}_4$ \\ $ + 70 \mathsf{E}_5 + 34 \mathsf{E}_6$ \\ $ +10 \mathsf{E}_7 + \mathsf{E}_8 $ }
& \makecell{$180 \mathsf{E}_3+ 186 \mathsf{E}_4$ \\ $ + 122 \mathsf{E}_5 + 58 \mathsf{E}_6$ \\ $ + 20 \mathsf{E}_7 + 5 \mathsf{E}_8$ \\ $ + \mathsf{E}_9$ }
& \makecell{$330 \mathsf{E}_3 +315 \mathsf{E}_4$ \\ + $198 \mathsf{E}_5 + 97 \mathsf{E}_6$ \\ $ + 40 \mathsf{E}_7 + 15 \mathsf{E}_8$ \\ $ + 5 \mathsf{E}_9 + \mathsf{E}_{10}$} \\
\hline
\end{tabular}
\caption{Values of $\mathsf{T}_{\ell,r}$ for $\ell<0$}
%{Some values of $ \mathsf{T}_{\ell,r}$ for $ \ell < 0$} 
\end{table}

\subsection{Refined Dyck path counting} \label{rdpc}
%{Writing $\mathsf{T}_{\ell,r}$ as linear combination of the $\mathsf{E}_s$  for $1 \leq s \leq r- \ell + 1$ }

For $\ell \leq 0$ and $r \geq 1$,
  we can write 
\begin{equation}\label{b445}
\mathsf{T}_{\ell,r}= \sum_{s=1}^{r- \ell +1} c_{\ell,r}^s \mathsf{E}_s
\end{equation}
by Theorem \ref{n33r} and Corollary \ref{Cor:TZcombination}.
By definition,
$$\mathsf{E}_1[j]=2^j\, .$$
For $s\geq 2$, define the vectors $\mathsf{E}'_s$ by
$$\mathsf{E}'_s [j]= \mathsf{E}_s[j] - 2^{s-1} \mathsf{E}_1[j]\, .$$
Then, 
$\mathsf{E}'_s[j]$ a polynomial in $j$ of degree $s-2$. 
The vectors $\mathsf{E}_s$ are
therefore independent, and
the coefficients $c^s_{\ell,r}$ in \eqref{b445}
are unique.

By Theorem \ref{n33r}, the
coefficient $c^s_{\ell,r}$
is the number of 
paths $\gamma\in \mathsf{P}(\ell,r)$
of index $s$ and therefore defines
a refined Dyck path counting problem.
The solution is given by the following result.

\begin{proposition} \label{Finding the c_s,l,r}

Let $ \ell \leq 0$ and $ r \geq 1$. 

\vspace{5pt}
\noindent
{\tiny{$\blacksquare$}}
If $\ell=0$ and $s \in \Z$,  then{\footnote{Here, $\delta^s_r$ is the Kronecker delta.}}
$
c_{0,r}^s= \delta^{s}_{r}$.

\vspace{5pt}
\noindent
{\tiny{$\blacksquare$}}
If $\ell=-1$ and $s \in \Z$, then
$
c_{-1,r}^s= \delta^{s -\delta_{r,1}}_{r+1}$.

\vspace{5pt}
\noindent
{\tiny{$\blacksquare$}}
If $\ell < -1$ and $3 \leq s \leq r- \ell +1$, then
\begin{equation*} 
c_{\ell,r}^s= \frac{(s-2)(s+r-4)}{|\ell| -2+r} {{|2 \ell| +r -s-1} \choose {|\ell| + 2- s} } + {{|2 \ell| +r-s-1} \choose {|\ell| -1}} -{{|2 \ell| +r-s-1} \choose {|\ell| + r-2}} \end{equation*}
and $c^s_{\ell,r}=0$ otherwise.
\end{proposition}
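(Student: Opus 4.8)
The plan is to prove the Proposition purely combinatorially. By Theorem \ref{n33r}, Corollary \ref{Cor:TZcombination}, and the uniqueness of the expansion \eqref{b445} (established just above), the coefficient $c^s_{\ell,r}$ equals the number of paths $\gamma\in\mathsf{P}(\ell,r)$ with $\mathsf{Ind}(\gamma)=s$. Thus the whole statement is a refined count of the lattice paths from $(0,1)$ to $(\ell,r)$ built from $\mathsf{U}=(0,1)$ and $\mathsf{D}=(-1,-1)$, graded by the number $s$ of boundary contacts. The cases $\ell=0$ and $\ell=-1$ serve as the base of an induction: for $\ell=0$ the only path is the string of $\mathsf{U}$'s, of index $r$, giving $c^s_{0,r}=\delta^s_r$, and the $\ell=-1$ values follow at once from the recursions below.

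The core of the argument is a last-step decomposition. Since $\ell$ only decreases along a path and the first step out of $(0,1)$ cannot be $\mathsf{D}$ (it would leave $\mathcal{A}$), every $\gamma\in\mathsf{P}(\ell,r)$ with $r\geq 2$ is obtained by appending $\mathsf{U}$ to a path in $\mathsf{P}(\ell,r-1)$ or by appending $\mathsf{D}$ to a path in $\mathsf{P}(\ell+1,r+1)$, and every $\gamma\in\mathsf{P}(\ell,1)$ with $\ell\leq-1$ is obtained by appending $\mathsf{D}$ to a path in $\mathsf{P}(\ell+1,2)$. First I would record how $\mathsf{Ind}$ changes: appending a step that lands on a new point of $\partial\mathcal{A}$ raises the index by one, and otherwise leaves it unchanged. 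For $\ell\leq-1$ the new endpoints are interior except when $r$ drops to $1$, giving the two clean recursions
\begin{equation*}
c^s_{\ell,1}=c^{\,s-1}_{\ell+1,2}\quad(\ell\leq-1),\qquad
c^s_{\ell,r}=c^s_{\ell,r-1}+c^s_{\ell+1,r+1}\quad(r\geq2,\ \ell\leq-1).
\end{equation*}
These, together with $c^s_{0,r}=\delta^s_r$, determine all coefficients, so it remains to verify that the proposed closed formula satisfies both recursions and the base values. This is a finite sequence of Pascal- and Vandermonde-type manipulations, the delicate point being the rational prefactor $\tfrac{(s-2)(s+r-4)}{|\ell|-2+r}$, whose compatibility with the additive recursion forces precisely this shape.

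As an independent route that explains the three-term structure, I would instead count the index-$s$ paths directly. Writing $h=r-1$ for the height, a path is a ballot word in $\mathsf{U},\mathsf{D}$ with $|\ell|$ down-steps and $r-1+|\ell|$ up-steps staying at height $\geq 0$; its index splits as $s=(k+1)+t$, where $k\geq 1$ is the length of the initial $\mathsf{U}$-plateau (the points on the ray $\ell=0$) and $t$ is the number of later returns to height $0$ (the points on the ray $r=1$ with $\ell<0$). Counting paths with prescribed plateau length and prescribed number of returns to the axis is a classical excursion/cycle-lemma computation: reflection across the single wall $r=1$ should produce the difference $\binom{|2\ell|+r-s-1}{|\ell|-1}-\binom{|2\ell|+r-s-1}{|\ell|+r-2}$, while refining by the number $t$ of returns introduces the ballot-number prefactor and accounts for the first term. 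Summing over the admissible $k$ (subject to $k+1+t=s$) and simplifying should reproduce the stated expression, the vanishing outside $3\leq s\leq r-\ell+1$ coming from $k\geq1$ and the bound $\mathsf{Ind}(\gamma)\leq r-\ell+1$.

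The main obstacle in either approach is the bookkeeping forced by the asymmetry of $\partial\mathcal{A}$: the two boundary rays meet only at $(0,1)$ and contribute to the index in genuinely different ways (every point at $\ell=0$ counts, versus only the height-$0$ points at $\ell<0$). Keeping this straight is what makes the recursion for $\mathsf{Ind}$ non-uniform near the corner, and in the direct count it is responsible for the separate plateau and return statistics. Granting the recursions, the residual difficulty is purely the binomial algebra needed to show that the three-term formula, together with its rational coefficient, collapses correctly under Pascal's rule; I expect this verification, rather than the combinatorial set-up, to be the most laborious part.
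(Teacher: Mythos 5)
Your reduction of the Proposition to counting paths in $\mathsf{P}(\ell,r)$ of index $s$ agrees with the paper's, but your primary route is genuinely different. The paper does not set up a recursion for the coefficients: it fixes the statistic $h$ with $|\gamma\cap\{\ell=0\}|=h+1$, writes $c^s_{\ell,r}=\sum_{h=1}^{s-1}d(s-h-1;-1,h,\ell,r)$ for an auxiliary count $d$ of paths meeting the $r=1$ axis a prescribed number of times, and evaluates each summand in closed form via Lemma \ref{scott} (translation, a $\mathsf{U}\leftrightarrow\mathsf{D}$ transposition, a bijection absorbing returns into a height shift, and the reflection formula). The structural point there --- which your second, ``independent'' route essentially rediscovers, since your $(k,t)$ with $s=(k+1)+t$ is exactly their $(h,s-h-1)$ --- is that the summand is \emph{independent of $h$} for $1\le h\le s-2$; this produces the factor $(s-2)$ in the first term, while $h=s-1$ (no returns) gives the two-binomial difference directly by reflection. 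Your first route is sound as a plan: the recursions $c^s_{\ell,1}=c^{s-1}_{\ell+1,2}$ and $c^s_{\ell,r}=c^s_{\ell,r-1}+c^s_{\ell+1,r+1}$ for $\ell\le-1$ are correctly derived (no point of a path is revisited, and the index bookkeeping at the corner is right), and together with $c^s_{0,r}=\delta^s_r$ they determine all coefficients by induction on the step count $|2\ell|+r-1$. What this buys is independence from the reflection machinery; what it costs is that the deferred binomial verification is the entire content of the proof, and it is not uniform: the step from $\ell=-2$ calls on $c^s_{-1,r+1}$, which is not governed by the $\ell<-1$ closed formula and must be supplied as a separate base case.

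One concrete warning about that base case. Running your recursion at $\ell=-1$ gives $c^s_{-1,1}=\delta^s_3$ and $c^s_{-1,r}=c^s_{-1,r-1}+\delta^s_{r+1}$ for $r\ge 2$, hence for instance $c^3_{-1,2}=2$ (both admissible paths $\mathsf{U}\mathsf{U}\mathsf{D}$ and $\mathsf{U}\mathsf{D}\mathsf{U}$ to $(-1,2)$ have index $3$, consistent with the table entry $\mathsf{T}_{-1,2}=2\mathsf{E}_3$). This does not agree with the displayed $\ell=-1$ clause $c^s_{-1,r}=\delta^{s-\delta_{r,1}}_{r+1}$ once $r\ge 2$, so your claim that the $\ell=-1$ values ``follow at once from the recursions'' cannot be left at that: the recursion yields the correct counts, which differ from the stated formula. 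The paper's own proof dismisses $\ell=0,-1$ as trivial, so this is an issue with the statement rather than with either argument, but on your route it surfaces explicitly and must be addressed before the $\ell\le -2$ induction can be anchored.
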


Before starting the proof, we require additional notation
and an auxiliary result. 
For $(u,v),(u',v') \in \mathcal{A}$ and $k \in \mathbb{Z}$, let 
$$
\mathcal{D}(k; u,v,u',v')
$$
be the set of paths $\gamma$
in $\mathcal{A}$ from $(u,v)$ to $(u',v')$ which take steps $U$ and $D$ and meet the $r=1$ axis
$$\{ \, (\ell,1)\, | \, \ell\leq 0\, \}\subset \mathcal{A}$$ 
%$\mathcal{A} \cap \{\ell = 0 \}$ 
exactly $k$ times. Let
$
d(k; u,v,u',v')= |\mathcal{D}(k; u,v,u',v')|
$.

\begin{lemma}\label{scott} Let $(u,v),(u',v') \in \mathcal{A}$. 
Let $t,j,k \in \mathbb{Z}$ satisfy $1 \leq t< k$, $u' \leq u$ and $j \leq -u$.  The following equalities then
hold:
\begin{enumerate}
    \item[(i)] $d(k;u+j,v,u'+j,v')=d(k;u,v,u',v')$ ,
    \item[(ii)] $d(k;0,1,u',v')=d(k;0,v',u'-v'+1,1)$ ,
    \item[(iii)] $d(k;u,v,u',v')=d(k-t;u,v+t,u',v')$ ,
    \item[(iv)] $d(k;0,1,u',1)=
\binom{-2 u' -k}{-u'-k+1} \frac{k-1}{-u'}$ .
\end{enumerate}

\end{lemma}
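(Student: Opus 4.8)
The plan is to strip away the redundant first coordinate and recast all four identities as statements about one-dimensional lattice walks. Introduce $x=-\ell\geq 0$ and $h=r-1\geq 0$. In these coordinates the step $\mathsf{U}$ becomes $(x,h)\mapsto(x,h+1)$ and $\mathsf{D}$ becomes $(x,h)\mapsto(x+1,h-1)$, so the $x$-coordinate is nothing but a counter of the $\mathsf{D}$-steps taken. Consequently a path in $\mathcal{A}$ is determined by its word in $\mathsf{U},\mathsf{D}$, and $d(k;u,v,u',v')$ equals the number of $\pm1$ walks from height $v-1$ to height $v'-1$ that stay $\geq 0$, use exactly $u-u'$ down-steps, and hit $0$ exactly $k$ times. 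The constraint $\ell\leq 0$ becomes $x\geq 0$, which is automatic once the starting value is nonnegative because $x$ is nondecreasing; this is exactly what the hypotheses $u'\leq u$ and $j\leq -u$ guarantee.

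Part (i) is then immediate: the walk count just described depends only on the two heights $v-1,v'-1$ and on the number $u-u'$ of down-steps, all of which are unchanged under $u\mapsto u+j$, $u'\mapsto u'+j$. For part (ii) I would use reversal of the height walk. Time-reversing the sequence of heights $h_0,\dots,h_N$ corresponds to reading the $\mathsf{U}/\mathsf{D}$-word backwards and swapping $\mathsf{U}\leftrightarrow\mathsf{D}$; this preserves the multiset of visited heights (hence the number of visits to $0$) while exchanging the up- and down-step counts and swapping the two endpoint heights. A direct bookkeeping of the resulting endpoints, placing the new start at $x=0$, shows the image is a walk from $(0,v')$ to $(u'-v'+1,1)$, which is exactly the content of (ii).

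The heart of the argument is part (iii), which I would reduce to the case $t=1$ and iterate (legitimate since $1\leq t<k$ keeps every intermediate touch-count $\geq 1$). For $t=1$ I would decompose a walk with $k\geq 2$ touches along its first visit to $0$ into a first-passage $A$ from height $h_0$ to $0$, a first excursion $B=\mathsf{U}\,\beta\,\mathsf{D}$, and a remainder $C$; the claimed bijection fuses $(A,B)$ into a single first-passage $A'$ from height $h_0+1$ to $0$ by the rule $A'=A^{+1}\cdot\beta^{+1}\cdot\mathsf{D}$, where $(\cdot)^{+1}$ denotes shifting a segment up by one unit. A short check shows $A'$ has the same number of down-steps as $A\cdot B$ and one fewer up-step, so the new walk $A'\cdot C$ starts one unit higher, has $k-1$ touches, and keeps the down-step count fixed; the inverse is recovered by cutting $A'$ at its first arrival at height $1$. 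This is the step I expect to require the most care, since one must verify that $A'$ genuinely stays positive until its final step and that the fusion is invertible.

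Finally, part (iv) is the base case in which both endpoints lie on $h=0$. Here the walk is a Dyck path of semilength $n=-u'$, and ``touching $0$ exactly $k$ times'' means it decomposes into exactly $k-1$ arches. The number of Dyck paths of semilength $n$ with a prescribed number $m=k-1$ of arches is the classical return-enumeration $\frac{m}{2n-m}\binom{2n-m}{n}$, obtained from the first-return (arch) decomposition together with the cycle lemma. Substituting $m=k-1$ and simplifying with $\binom{2n-k+1}{n}=\frac{2n-k+1}{n}\binom{2n-k}{n-k+1}$ yields $\frac{k-1}{n}\binom{2n-k}{n-k+1}$, the stated formula with $n=-u'$. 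Parts (i)--(iv) together then reduce any $d(k;u,v,u',v')$ to this closed form by first normalizing $u=0$ via (i), moving the starting height to $1$ via (iii), reflecting the endpoint onto $h=0$ via (ii), and evaluating with (iv), as needed in Section \ref{rdpc}.
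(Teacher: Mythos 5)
Your proof is correct, and at the level of strategy it matches the paper's: all four identities are handled by elementary bijections on lattice walks, with (i) trivial, (ii) by reversal/complementation of the step word, (iii) by a touch-removing bijection, and (iv) by classical Dyck-path enumeration. The mechanisms for (iii) and (iv) are genuinely different, though. For (iii) the paper does not iterate over $t$: it deletes, in one stroke, the $\mathsf{U}$-step immediately following each of the first $t$ touches of the $r=1$ axis and translates the whole path up by $t$ units; your version reduces to $t=1$ and fuses the first passage $A$ with the first excursion $B$. Both work --- the simultaneous deletion avoids tracking intermediate touch-counts, while your first-return surgery makes the inverse (cutting $A'$ at its first arrival at height $1$) completely explicit. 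For (iv) the paper first applies (iii) to reduce to $d(1;0,k,u',1)$ and then counts by a reflection argument, getting $\binom{-2u'-k}{-u'-k+1}-\binom{-2u'-k}{-u'}$; you instead invoke the classical count $\frac{m}{2n-m}\binom{2n-m}{n}$ of Dyck paths of semilength $n$ with $m$ arches and simplify, which lands on the same closed form. Two small remarks: in (ii) the paper's sketch literally says only ``swap $\mathsf{U}\leftrightarrow\mathsf{D}$,'' whereas preserving the number of visits to the axis really does require the time reversal you include, so your formulation is the more careful one; and in (iii) the symbol $\beta^{+1}$ is a slip --- $\beta$ already lives at heights $\geq 1$ and should be concatenated as is after $A^{+1}$, as your description of the inverse makes clear.
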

\begin{proof}
Identity (i) is trivial.
Identities (ii)-(iv) have been proven, for example, in \cite{Stack_Exchange} by B. Scott. We sketch here his proof adapted to our notation. 

\vspace{5pt}

\noindent $\bullet$ Identity (ii). Every
Dyck path in $\mathcal{A}$ from $(0,1)$ to $(u',v')$ can be written as a sequence of $-u'$ letters $D$ and $- u'+v'-1$ letters $U$. For any  such sequence, changing every $D$ with an $U$ and every $U$ with a $D$ yields a Dyck path in $\mathcal{A}$ from $(0,v')$ and $(u'-v'+1,1)$. Moreover, the number of intersections with the $r=1$ axis of the old and the new paths are the same. Therefore, we have bijection between $\mathcal{D}(k;0,1,u',v')$ and $\mathcal{D}(k;0,v',u'-v'+1,1)$.

\vspace{5pt}
\noindent $\bullet$ Identity (iii). Let $\gamma \in \mathcal{D}(k;u,v,u',v')$. For $1\leq i \leq t$, let $(\ell_i,1)$ be the first $t$ points at which $\gamma$ meets the $r=1$ axis. Here,
$$\ell_1 < \ldots < \ell_t\, .$$ 
Each of the points $(\ell_i,1)$ is immediately followed by a step $U$ in $\gamma$. Removing these $t$ steps $U$ and translating $\gamma$ upwards $t$ units yields a path in $\mathcal{D}(k-t;u,v+t,u',v')$. The operation is a bijection.

\vspace{5pt}
\noindent $\bullet$ Identity (iv). Using Identity (iii), we have 
\begin{align*}
d(k;0,1,u',1)&=d(1;0,k,u',1) \\
&= \binom {-2 u'-k}{-u'-k+1} - \binom{-2 u' -k}{-u'} \\
& = \binom{-2 u'-k}{-u'-1} \frac{k-1}{-u'}\, .
\end{align*}
The second equality is by directly 
counting the paths using a standard reflection trick (see \cite[p.22]{Comtet}). 
\end{proof}

\vspace{5pt}
\begin{proof}[Proof of Proposition \ref{Finding the c_s,l,r} ]
For $\ell=0,-1$ the result is trivial, so we
assume $\ell < -2$. 
We  write 
\begin{align*}
c_{\ell,r}^s& = | \{\gamma\in \mathsf{P}(\ell,r) : \mathsf{Ind}(\gamma)=s
\}| \\
& = \sum_{h=1}^{s-1} | \{\gamma\in \mathsf{P}(\ell,r) : | \gamma \cap \{ \ell=0\}|=h+1 \text{ and } | \gamma \cap \{ r=1\}|=s-h
\}| \\
&= \sum_{h=1}^{s-1} d(s-h-1;-1,h,\ell,r).
\end{align*}
By Lemma \ref{scott},  for $ h \in \{1,..., s-2 \}$,
the following chain of equalities holds:
\begin{align*}
d(s-h-1;-1,h,\ell,r)&= d(s-2;-1,1,\ell,r) \\
&=d(s-2;0,1,\ell+1,r) \\
&=d(s-2;0,r,\ell+2-r,1) \\
&=d(s+r-3;0,1,\ell+2-r,1) \\
& =\binom{|2 \ell| +r-s-1}{|\ell|-s+2} \frac{s+r-4}{|\ell|-2+r}\, .
\end{align*}
For $h=s-1$, a simple direct count  yields
\begin{align*}
d(0;-1,s-1,\ell,r) &=d(0;0,s-1,\ell+1,r) \\
&=\binom{|2 \ell| +r-s-1}{|\ell|-1} - \binom{|2 \ell| +r-s-1}{|\ell|-2+r}
\end{align*}
which is the number of allowed paths from $(0,s-2)$ to $(\ell+1,r-1)$. \\
The result follows by summing all the terms.
\end{proof}

\subsection{Fixed $j$}
An interesting  direction is to compute $\mathsf{T}_{\ell,r}[j]$
for fixed $j$ in terms of $\ell$ and $r$. The simplest example
is Castelnuovo's formula for $\ell<0$:
$$\mathsf{T}_{\ell,1}[0]= |\mathsf{P}(\ell,1)| = \frac{1}{|\ell|+1}
\binom{|2\ell|}{|\ell|}\,.$$

\begin{proposition}
For $ \ell <0$, we have 
$$
\mathsf{T}_{\ell,1}[1]=|\mathsf{P}(\ell,1)| + \frac{3}{ 2 | \ell | +1 } {{2 |\ell| + 1} \choose {|\ell | -1}} + \frac{4 (2 |\ell| -1)(2 | \ell | + 1) ({{2 |\ell| - 2} \choose {|\ell | -1}} - {{2 |\ell| - 2} \choose {|\ell |}}   )}{(|\ell| +1)(| \ell |+2)} \, .
$$
\end{proposition}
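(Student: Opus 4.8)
The plan is to start from Theorem \ref{n33r}, which gives
$$\mathsf{T}_{\ell,1}[1] = \sum_{\gamma\in\mathsf{P}(\ell,1)} \mathsf{E}_{\mathsf{Ind}(\gamma)}[1].$$
First I would evaluate $\mathsf{E}_s[1]$. From the definition $\mathsf{E}_s[1] = 2^s - \sum_{i=0}^{s-2}\binom{s}{i}$, and since $\sum_{i=0}^{s}\binom{s}{i} = 2^s$, only the top two terms $\binom{s}{s-1}+\binom{s}{s}$ survive, so $\mathsf{E}_s[1] = s+1$. The key observation is that this ``$+1$'' reorganises the index geometrically. Writing $m=|\ell|$, the two rays forming $\partial\mathcal{A}$ meet only in the corner $(0,1)$, through which every path of $\mathsf{P}(\ell,1)$ passes exactly once (after a single $\mathsf{D}$ step the first coordinate is strictly negative and can never return to $0$). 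Hence, by inclusion--exclusion,
$$\mathsf{Ind}(\gamma)+1 = \big|\gamma\cap\{\ell=0\}\big| + \big|\gamma\cap\{r=1\}\big|,$$
and summing over $\gamma$ yields $\mathsf{T}_{\ell,1}[1] = A + B$, where $A = \sum_\gamma|\gamma\cap\{\ell=0\}|$ and $B=\sum_\gamma|\gamma\cap\{r=1\}|$.

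I would then compute $A$ and $B$ by exchanging the order of summation, counting for each lattice point $P$ on the relevant ray the number of paths through $P$. For $B$: a path through $(\ell',1)$ splits uniquely there, so the number passing through it is $|\mathsf{P}(\ell',1)|\cdot|\mathsf{P}(\ell-\ell',1)|$ by the translation invariance of Lemma \ref{scott}(i); Castelnuovo's Catalan count turns these factors into $C_{|\ell'|}$ and $C_{m-|\ell'|}$, and summing gives the Catalan convolution $B = \sum_{k=0}^m C_k C_{m-k} = C_{m+1}$. For $A$: a path through $(0,k)$ is forced to climb $(0,1)\to\cdots\to(0,k)$ and then run to $(\ell,1)$, so the count through $(0,k)$ equals the number $g(k)$ of admissible paths from $(0,k)$ to $(\ell,1)$; a reflection argument of the same type as Lemma \ref{scott}(iv) gives $g(k) = \binom{2m+1-k}{m} - \binom{2m+1-k}{m-k}$, and two applications of the hockey-stick identity give $A = \binom{2m+1}{m+1} - \binom{2m+1}{m+2}$.

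Finally I would simplify
$$A + B = \binom{2m+1}{m+1}-\binom{2m+1}{m+2}+C_{m+1}$$
to the stated three-term form, which isolates $C_m = |\mathsf{P}(\ell,1)|$ and matches the two correction terms after clearing the denominators $2m+1$ and $(m+1)(m+2)$ occurring on the right. This last step is the main obstacle: it is a purely elementary but fiddly binomial identity, and the bookkeeping in matching binomials, rather than any conceptual difficulty, is where the real work lies. (As an alternative route, one may bypass the geometric decomposition and compute $\sum_\gamma\mathsf{Ind}(\gamma) = \sum_s s\,c^s_{\ell,1}$ directly from the closed form of Proposition \ref{Finding the c_s,l,r}, writing $s=(s-2)+2$ so that the weight-$(s-2)$ sum produces the third term and the constant part produces $2C_m$; in that approach the case $\ell=-1$, where the formula for $c^s_{\ell,1}$ has a vanishing denominator, must be verified separately against $\mathsf{T}_{-1,1}=\mathsf{E}_3$.)
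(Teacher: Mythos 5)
Your proposal is correct and follows the same basic strategy as the paper: both start from Theorem \ref{n33r}, observe that $\mathsf{E}_s[1]=s+1$, and reduce to summing $\mathsf{Ind}(\gamma)$ over $\gamma\in\mathsf{P}(\ell,1)$ by splitting the index into the contributions of the two boundary rays. The differences lie in how the two ray-sums are evaluated. The paper writes $\mathsf{Ind}(\gamma)=r(\gamma)+|\gamma\cap\{\ell=0\}|$, where $r(\gamma)$ is the number of return steps --- which equals $|\gamma\cap\{r=1\}|-1$, i.e.\ exactly your inclusion--exclusion at the corner $(0,1)$ --- then quotes Deutsch for $\sum_\gamma r(\gamma)=\frac{3}{2|\ell|+1}\binom{2|\ell|+1}{|\ell|-1}$ and computes the column-$0$ sum by weighting each maximal height $k$ by the number of paths attaining it; the three terms of the statement are then literally $|\mathsf{P}(\ell,1)|$, $\sum_\gamma r(\gamma)$, and the column-$0$ sum, so no further simplification is needed. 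You instead evaluate $B=\sum_\gamma|\gamma\cap\{r=1\}|$ self-containedly as the Catalan convolution $\sum_{k}C_kC_{m-k}=C_{m+1}$ (a nice replacement for the citation), and $A$ by summing the cumulative counts $g(k)$ and applying the hockey-stick identity, obtaining $A=\binom{2m+1}{m+1}-\binom{2m+1}{m+2}$; both computations are correct (in fact $A=B=C_{m+1}$, so the degree is simply $2C_{m+1}$). What your route leaves undone --- and what the paper's choice of decomposition avoids --- is the final matching of $A+B$ against the three displayed terms, namely the identities $C_{m+1}-C_m=\frac{3}{2m+1}\binom{2m+1}{m-1}$ and $\binom{2m+1}{m+1}-\binom{2m+1}{m+2}=\frac{4(2m-1)(2m+1)\left(\binom{2m-2}{m-1}-\binom{2m-2}{m}\right)}{(m+1)(m+2)}$. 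These do hold and are elementary, but they must be written out to complete your version of the argument.
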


\begin{proof}
The case $l =-1 $ can be checked by hand, so we assume $ l < -1$. Using Theorem \ref{n33r}, we have
$$
\mathsf{T}_{\ell,1}[1]= \sum_{\gamma\in \mathsf{P}(\ell,1)} 
\mathsf{E}_{\mathsf{Ind}(\gamma)}[1]= |\mathsf{P}(\ell,1)| + \sum_{\gamma\in \mathsf{P}(\ell,1)} \mathsf{Ind}(\gamma)\, .
$$

For $ \gamma \in \mathsf{P}(\ell,1) $, define 
a {\em return step} of $\gamma$ to be a step $\mathsf{D}=(-1,-1)$ of $\gamma$ ending in 
the subset of $\mathcal{A}$ with $r=1$. Let $ r(\gamma)$ to be the number of return steps in $\gamma$. Then, 
\begin{equation}
    \label{x34f}
\mathsf{Ind}( \gamma) = r( \gamma) + |  \gamma \cap \{ \ell =0 \} |\, .
\end{equation}
By Section 6.6 of \cite{Deut}, the first term of \eqref{x34f} is
$$
\sum_{\gamma\in \mathsf{P}(\ell,1)} r ( \gamma) = \frac{3}{ 2 | \ell | +1 } {{2 |\ell| + 1} \choose {|\ell | -1}}\, .
$$
The second term of \eqref{x34f} is
\begin{align*}
\sum_{\gamma\in \mathsf{P}(\ell,1)} & |  \gamma  \cap \{ \ell = 0 \}| \\ 
& = \sum_{k=2}^{|\ell| + 1 } k \Bigg[{{2 |\ell| - (k-1)} \choose {|\ell |}} - {{2 |\ell| -(k-1)} \choose {|\ell | +1}}- \Bigg( {{2 |\ell| - k} \choose {|\ell | }} - {{2 |\ell| -k} \choose {|\ell | +1}}  \Bigg) \Bigg] \\
& = \sum_{k=2}^{|\ell| + 1 } k \Bigg[  {{2 |\ell| - k} \choose {|\ell |-1 }} - {{2 |\ell| - k} \choose {|\ell | }} \Bigg] \\
& =  \frac{4 (2 |\ell| -1)(2 | \ell | + 1) ({{2 |\ell| - 2} \choose {|\ell | -1}} - {{2 |\ell| - 2} \choose {|\ell |}}   )}{(|\ell| +1)(| \ell |+2)}\, .
\end{align*}
After summing the terms, we obtain the result.
\end{proof}

\section{Proof of Theorem \ref{negcase}}
\label{fccp}

As discussed at the end of Section \ref{dd13}, the formulas of Theorem \ref{negcase} 
cover the Tevelev degrees $\mathsf{T}_{g,\ell,r}$ in {\em all} cases. The master result
which specializes to Theorem \ref{negcase} and also covers the $\ell\geq 0$ cases takes
the following
form.

\begin{theorem}
Let $g \geq 0$, $\ell \in \mathbb{Z}$, and $r \geq 1$. 
\label{vvtt10}

\vspace{5pt}
\noindent
{\tiny{$\blacksquare$}}
If $g<r-2\ell-1$, then
\begin{equation} \label{eqn:Tevinequality} \mathsf{Tev}_{g,\ell,r} = 0\, .\end{equation}

\vspace{5pt}
\noindent  {\tiny{$\blacksquare$}}
If $g\geq r-2\ell-1$,
then the Tevelev degrees are

\vspace{5pt}
\noindent\ \ \ \ \ $\bullet$ for $r=1$,
\begin{equation} \label{eqn:Tevformula1}
    \mathsf{Tev}_{g,\ell,r} = 2^g -2 \sum_{i=0}^{-\ell-2} \binom{g}{i} + (-\ell  -2) \binom{g}{-\ell-1} + \ell \binom{g}{-\ell} \, ,
\end{equation}
 
 \vspace{5pt}
\noindent \ \ \ \ \ $\bullet$ for $r>1$,
\begin{equation} \label{eqn:Tevformular}
    \mathsf{Tev}_{g,\ell,r} = 2^g -2 \sum_{i=0}^{-\ell-2} \binom{g}{i} + (-\ell + r -3) \binom{g}{-\ell-1} + (\ell-1) \binom{g}{-\ell} - \sum_{i=-\ell+1}^{r-\ell-2} \binom{g}{i}\,.
\end{equation}
\end{theorem}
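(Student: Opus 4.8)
The plan is to prove the unified formula by induction on the genus $g$, exploiting the fact that the recursion of Proposition \ref{recc}, together with the genus-$0$ evaluations of Proposition \ref{genus-0-case} and the definitional vanishing, already determines every $\mathsf{Tev}_{g,\ell,r}$. It therefore suffices to verify that the right-hand sides of \eqref{eqn:Tevformula1} and \eqref{eqn:Tevformular} (i) reproduce the known values in the region where the answer is already pinned down, and (ii) satisfy the recursion \eqref{eqn:tevrecursion} wherever it applies. Before the induction I would dispose of the range $\ell\geq 0$: there the sums $\sum_{i=0}^{-\ell-2}$ are empty and the binomials $\binom{g}{-\ell-1},\binom{g}{-\ell}$ vanish for $\ell>0$, while reducing to $0$ and $1$ for $\ell=0$. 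With the standard conventions $\binom{g}{i}=0$ for $i<0$ and empty sums equal to $0$, a direct substitution shows that \eqref{eqn:Tevformular} collapses to $2^g$ when $\ell\geq r$ and to $2^g-\sum_{i=0}^{r-\ell-2}\binom{g}{i}=\mathsf{Tev}_{g,0,r-\ell}$ when $\ell<r$, matching Theorems \ref{vvtt6} and \ref{vvtt7}. This simultaneously settles the genus-$0$ base case, since $g=0$ forces $\ell\geq 0$.

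The heart of the argument is the inductive step for $\ell\leq 0$. Writing $F_r(g,\ell)$ for the right-hand side of \eqref{eqn:Tevformula1} (when $r=1$) or of \eqref{eqn:Tevformular} (when $r>1$), I would substitute into the recursion
\[
\mathsf{Tev}_{g,\ell,r}=\mathsf{Tev}_{g-1,\ell,\max(1,r-1)}+\mathsf{Tev}_{g-1,\ell+1,r+1}
\]
and check the identity $F_r(g,\ell)=F_{\max(1,r-1)}(g-1,\ell)+F_{r+1}(g-1,\ell+1)$ term by term, the only nontrivial input being Pascal's rule $\binom{g-1}{i-1}+\binom{g-1}{i}=\binom{g}{i}$. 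The verification splits into three cases according to whether $r>2$ (both summands use \eqref{eqn:Tevformular}), $r=2$ (the first summand drops to the $r=1$ formula \eqref{eqn:Tevformula1}), or $r=1$ (where $\max(1,r-1)=1$, so the first summand is \eqref{eqn:Tevformula1} with the same $\ell$). Matching the $2^g$ term, the leading sum $-2\sum_{i=0}^{-\ell-2}\binom{g}{i}$, the two isolated binomials, and the trailing sum $-\sum_{i=-\ell+1}^{r-\ell-2}\binom{g}{i}$ against their shifted counterparts at $(g-1,\ell)$ and $(g-1,\ell+1)$ is a finite bookkeeping exercise once the index ranges are aligned.

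For the base of the induction along each diagonal I would examine the minimal genus $g=g[\ell,r]=r-2\ell-1$ directly. The cleanest route is to establish that the closed expressions vanish identically for $g<g[\ell,r]$, so that the recursion, read at $g=g[\ell,r]$, forces the correct leading value $\mathsf{T}_{\ell,r}[0]$; alternatively one compares with $\mathsf{T}_{\ell,r}[0]=|\mathsf{P}(\ell,r)|$ coming from Theorem \ref{n33r} and Castelnuovo's count. As an independent check, and a second fully self-contained derivation, I would expand $\mathsf{Tev}_{g,\ell,r}=\sum_s c^s_{\ell,r}\,\mathsf{E}_s[g-g[\ell,r]]$ using Theorem \ref{n33r} and the explicit coefficients of Proposition \ref{Finding the c_s,l,r}, substitute $\mathsf{E}_s[j]=2^{s+j-1}-\sum_{i=0}^{s-2}\binom{s+j-1}{i}$, and resum the resulting products of binomials to recover \eqref{eqn:Tevformula1} and \eqref{eqn:Tevformular}.

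The main obstacle I anticipate is purely one of bookkeeping at the boundaries of the parameter ranges: making the trailing sum $\sum_{i=-\ell+1}^{r-\ell-2}\binom{g}{i}$ behave correctly under the transition $r=2\to r=1$ (the content of the footnote on the meaning of the $r=1$ specialization), and ensuring the isolated coefficients $\binom{g}{-\ell-1}$ and $\binom{g}{-\ell}$ continue to match across the shift $\ell\to\ell+1$ as $\ell+1$ crosses into the nonnegative regime already handled by Theorems \ref{vvtt6}--\ref{vvtt7}. Verifying the automatic vanishing of the closed formula for $g<g[\ell,r]$, which is exactly what lets a single induction on $g$ run without a separately argued base case on every diagonal, is the most delicate of these binomial identities and is where I would concentrate the computation.
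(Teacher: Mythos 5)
Your proposal follows essentially the same route as the paper's proof: induction on $g$ with the base case $g=0$ settled by Proposition \ref{genus-0-case} (since $g=0$ forces $\ell\geq 0$), followed by substituting the genus $g-1$ formulas into the recursion of Proposition \ref{recc} and closing the bookkeeping with Pascal's rule, split according to $r=1$ versus $r>1$. One small correction to an aside: the closed formulas do \emph{not} vanish identically for all $g<r-2\ell-1$ (e.g.\ $\ell=-2$, $r=1$, $g=0$ gives $-1$), but the only vanishing the induction actually requires — that \eqref{eqn:Tevformula1} vanishes at $g=-2\ell-1$, one step below threshold, when the $r=1$ recursion calls $\mathsf{Tev}_{g-1,\ell,1}$ — does hold, so your argument goes through as the paper's does.
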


\begin{proof}
We prove the complete statment by induction on $g$. The main tool is the recursion of 
Proposition \ref{recc}.

We start with the genus 0 case. For $g=0$, 
 $\ell \geq 0$, and $1\leq r \leq \ell +1$,
we have
$$ \mathsf{Tev}_{0,l,r}=1$$
by Proposition \ref{genus-0-case}. In all other $g=0$ cases,
$$\mathsf{Tev}_{0,l,r}=0\, .$$ 
For $\ell <0$, the Theorem predicts $\mathsf{Tev}_{0,l,r}=0$ by \eqref{eqn:Tevinequality}. 
For $\ell \geq 0$ and $r=1$, formula \eqref{eqn:Tevformula1} specializes to 
$$1 + \ell \binom{0}{\ell} = 1\,.$$
Similarly, for $\ell \geq 0$ and $r > 1$, formula \eqref{eqn:Tevformular} specializes to
\[
1  + (\ell-1) \binom{g}{-\ell} - \sum_{i=-\ell+1}^{r-\ell-2} \binom{g}{i} = \begin{cases}
1-1=0 &\text{for }\ell=0\\
1-\sum_{i=-\ell+1}^{r-\ell-2} \delta_{i,0}  &\text{for }\ell>0\, .
\end{cases}
\]
In the $\ell>0$ case, the expression vanishes if and only if
$0 \leq r-\ell-2$ and returns $1$ otherwise, completing the verification of the case $g=0$.

\vspace{5pt}
Let $g > 0$. Then, $\mathsf{Tev}_{g, \ell,r}=0$ if
$$n[g,\ell] - r+1=g+3+2\ell-r+1 \ < \ 3\, .$$
Equivalently
$$g < r-2\ell-1\,, $$
recovering condition \eqref{eqn:Tevinequality}. If $g \geq  r-2\ell-1$,
we have the recursion
\begin{equation} \label{eqn:Tevrecursionrepeat}
    \mathsf{Tev}_{g,\ell,r} = \mathsf{Tev}_{g-1,\ell,\max(1,r-1)} + \mathsf{Tev}_{g-1,\ell+1,r+1}
\end{equation}
by Proposition \ref{recc}. 

After substituting the $g-1$ formulas (by induction), 
%$r=1$, that 
$\mathsf{Tev}_{g,\ell,1}$ is given by
\begin{align*}
  &  2^{g-1} -2 \sum_{i=0}^{-\ell-2} \binom{g-1}{i} + (-\ell + 1 -3) \binom{g-1}{-\ell-1} + \ell \binom{g-1}{-\ell} \\
+&2^{g-1} -2 \sum_{i=0}^{-\ell-3} \binom{g-1}{i} + (-\ell + 1 -3) \binom{g-1}{-\ell-2} + \ell \binom{g-1}{-\ell-1} - \underbrace{\sum_{i=-\ell}^{1-\ell-2} \binom{g-1}{i}}_{=0}\,.
\end{align*}
A straightforward check shows that the horizontally aligned terms combine to the terms in \eqref{eqn:Tevformula1} by the universal formula
\[
\binom{a-1}{b-1} + \binom{a-1}{b} = \binom{a}{b} \ \ \text{ for }a,b \in \mathbb{Z}\,.
\]
The check uses the formal manipulation
\begin{align*}
    \sum_{i=0}^{-\ell-3} \binom{g-1}{i} &= \sum_{i=0}^{-\ell-3} \binom{g-1}{i} + \underbrace{\binom{g-1}{-1}}_{=0} = \sum_{i=-1}^{-\ell-3} \binom{g-1}{i}\\
    &=\sum_{i=0}^{-\ell-2} \binom{g-1}{i-1}\,.
\end{align*}

Similarly, in case $r>1$, recursion
\eqref{eqn:Tevrecursionrepeat} shows that $\mathsf{Tev}_{g,\ell,r}$ is given by
\begin{align*}
    & 2^{g-1} -2 \sum_{i=0}^{-\ell-2} \binom{g-1}{i} + (-\ell + r -4) \binom{g-1}{-\ell-1} + (\ell-1) \binom{g-1}{-\ell} - \sum_{i=-\ell+1}^{r-\ell-3} \binom{g-1}{i}\\
    +& 2^{g-1} -2 \sum_{i=0}^{-\ell-3} \binom{g-1}{i} + (-\ell + r -3) \binom{g-1}{-\ell-2} + \ell \binom{g-1}{-\ell-1} - \sum_{i=-\ell}^{r-\ell-2} \binom{g-1}{i}
\end{align*}
The first two summands of each row combine to the corresponding terms in \eqref{eqn:Tevformular} as before. 

To combine the middle two summand pairs, we transfer a term $\binom{g-1}{-l-1}$ from the second to the first row. For the last summation, we use
\begin{align*}
    &\sum_{i=-\ell+1}^{r-\ell-3} \binom{g-1}{i} + \sum_{i=-\ell}^{r-\ell-2} \binom{g-1}{i}\\
   &\hspace{40pt}  =\sum_{i=-\ell+1}^{r-\ell-3} \binom{g-1}{i} + \sum_{i=-\ell}^{r-\ell-3} \binom{g-1}{i} + \binom{g-1}{r-\ell-2}\\
   &\hspace{40pt}  = \sum_{i=-\ell+1}^{r-\ell-2} \binom{g-1}{i} + \sum_{i=-\ell}^{r-\ell-3} \binom{g-1}{i}\\
    &\hspace{40pt}  =
    \sum_{i=-\ell+1}^{r-\ell-2} \binom{g-1}{i} + \binom{g-1}{i-1}\\
    &\hspace{40pt}  =
    \sum_{i=-\ell+1}^{r-\ell-2} \binom{g}{i}\,.
\end{align*}
We obtain precisely the last term of \eqref{eqn:Tevformular}, concluding the proof.
\end{proof}

If $g=-2\ell$, $\ell<0$, and $r=1$, 
Theorem \ref{vvtt10} yields
\begin{equation*} 
    \mathsf{Tev}_{-2\ell,\ell,1} = 2^{|2\ell|} -2 \sum_{i=0}^{|\ell|-2} \binom{|2\ell|}{i} + (|\ell| -2) \binom{|2\ell|}{|\ell|-1} - |\ell| \binom{|2\ell|}{|\ell|} \, ,
\end{equation*}
which equals the Catalan number $C_{|\ell|}$
as required by Castelnuovo's count.

\vspace{8pt}

\noindent Departement Mathematik, ETH Z\"urich\\
\noindent alessio.cela@math.ethz.ch

\vspace{8pt}

\noindent Departement Mathematik, ETH Z\"urich\\
\noindent rahul@math.ethz.ch

\vspace{8pt}

\noindent Mathematisches Institut der Universit\"at Bonn\\
\noindent schmitt@math.uni-bonn.de


\begin{thebibliography}{99}

\bibitem{ABC} N. Arkani-Hamed, J. Bourjaily, F. Cachazo, A. Postnikov, and J. Trnka,
{\em On-shell structures of MHV amplitudes beyond the planar limit}, J. of High Energy Physics {\bf 6} (2015), 179.

\bibitem{BaeSchmitt} Y. Bae and J. Schmitt, \newblock{\em Chow rings of stacks of prestable curves},
\href{https://arxiv.org/abs/2012.09887}{arXiv:2012.09887}. 


\bibitem{BP} P. Belorousski and
R. Pandharipande, {\em A descendent relation in genus 2}, Ann. Scuola Norm. Sup. Pisa Cl. Sci. {\bf 29} (2000), 171--191.

\bibitem{BM}
V. Bouchard and M. Mari\~{n}o,{\em Hurwitz numbers, matrix models and enumerative geometry} in {\em Hodge theory to integrability and TQFT tt*-geometry}, Proc. of Sympos. Pure Math. {\bf 78} (2008), 263--283.




\bibitem{Cast} G. Castelnuovo, {\em Numero delle involuzioni razionali giacenti sopra una
curva di dato genere}, Rendiconti R. Accad. Lincei {\bf 5} (1889), 130--133.

\bibitem{Comtet} L. Comtet,
{\em Advanced combinatorics, the art of finite and infinite expansions}, D. Reidel: Dordrecht and Boston (1974).

\bibitem{Dijk} R. Dijkgraaf,
{\em Mirror symmetry and elliptic curves}, The Moduli Space of Curves,
R. Dijkgraaf, C. Faber, G. van der Geer (editors), Progress in Mathematics, {\bf 129}, Birkh\"auser, 1995.

\bibitem{Deut} E. Deutsch, {\em Dyck path enumeration},
Discrete Math. {\bf 204} (1999), 167--202.

\bibitem{ELSV} T. Ekedahl, S. Lando, M. Shapiro, and A. Vainshtein,  {\em Hurwitz numbers and intersections on moduli spaces of curves}, Invent. Math. {\bf 146} (2001),  297--327.


\bibitem{FabP}
C.~Faber and R.~Pandharipande,
\newblock {\em Relative maps
and tautological classes},
\newblock {JEMS}, {\bf 7} (2005), 13--49.


\bibitem{GraberPandharipande}
T.~Graber and R.~Pandharipande,
\newblock {\em Constructions of nontautological classes on moduli spaces of curves},
\newblock {\em Michigan Math. J.}, {\bf 51} (2003), 93--109.

\bibitem{HM}
J. Harris and D. Mumford, 
{\em On the Kodaira dimension of the moduli space of curves}
With an appendix by William Fulton,
Invent. Math. {\bf 67} (1982),  23--88. 



\bibitem{Hur} A. Hurwitz,
{\em Ueber die Anzahl der Riemann'schen Fl\"achen mit gegebenen Verzweigungspunkten}, 
Math. Ann. {\bf 55} (1901), 53--66. 

\bibitem{JPPZ}
 F. Janda, R. Pandharipande, A. Pixton, D. Zvonkine, {\em Double ramification cycles on the moduli spaces of curves}, Publ. Math. Inst. Hautes Études Sci. {\bf 125} (2017), 221--266.


\bibitem{Lian2} C. Lian, {\em Non-tautological Hurwitz cycles}, \href{https://arxiv.org/abs/2101.11050}{arXiv:2101.11050}.

\bibitem{Lian1} C. Lian, {\em The H-tautological ring}, \href{https://arxiv.org/abs/2011.11565}{arXiv:2011.11565}.


\bibitem{OPcc} A. Okounkov and R. Pandharipande, {\em Gromov-Witten theory, Hurwitz numbers, and completed cycles}, Ann. of Math.
{\bf 163} (2006), 517--560.

\bibitem{OPmm} A. Okounkov and R. Pandharipande, {\em Gromov-Witten theory, Hurwitz numbers, and matrix models}, Proceedings of Algebraic geometry -- Seattle 2005, Proc. Sympos. Pure Math. {\bf 80}, Part 1, 324--414.



\bibitem{P} R. Pandharipande, {\em A geometric construction of Getzler's elliptic relation}, Math. Ann. {\bf 313} (1999), 715--729.

\bibitem{PHHT} R. Pandharipande and H.-H. Tseng,
{\em Higher genus Gromov-Witten theory of $\mathsf{Hilb}^n(\mathbb{C}^2)$ and CohFTs associated to local curves}, Forum Math. Pi {\bf 7} (2019).

\bibitem{Sage} SageMath, The Sage Mathematics Software System (Version 9.1),
   The Sage Developers, 2021, \url{https://www.sagemath.org}.

\bibitem{SvZ} J. Schmitt and J. van Zelm, {\em Intersections of loci of admissible covers with tautological classes}, Selecta Math. {\bf 26} (2020).

\bibitem{Stack_Exchange}
 B. Scott, \newblock{\em Number of Dyck paths that touch the diagonal exactly k times}, Mathematics Stack Exchange, \\ \url{https://math.stackexchange.com/questions/1900928/}. %\\ number-of-dyck-paths-that-touch-the-diagonal-exactly-k-times.


\bibitem{Stan} R. Stanley, {\em Enumerative combinatorics, Vol 2},
Cambridge Univ. Press: Cambridge, 1999.

\bibitem{Tev} J. Tevelev, {\em Scattering amplitudes of stable curves},
\href{https://arxiv.org/abs/2007.03831}{arXiv:2007.03831}.

\bibitem{Zelm}  J. van Zelm, {\em Nontautological bielliptic cycles}, Pacific J. Math. {\bf 294} (2018), 495--504.

\end{thebibliography}
\end{document}